\documentclass[onecolumn]{autart}
\usepackage{}    

\usepackage{tikz} 
\usetikzlibrary{shapes,arrows,chains,backgrounds,fit,decorations.pathreplacing}
\usepackage[europeanresistors,americaninductors]{circuitikz}
\usetikzlibrary{shapes,arrows}
\usepackage{amsmath,bm,times}

\usepackage{mathrsfs}
\usepackage{bbm}    

\usepackage{lineno,hyperref}

\usepackage{graphicx}           
\usepackage{dcolumn}            
\usepackage{bm}                 

\usepackage{graphics}           
\usepackage{epsfig}             
\usepackage{times}              
\usepackage{amsmath}
\usepackage{amssymb}
\usepackage{amsfonts}
\usepackage{fancyhdr}
\usepackage{color}
\usepackage{subfigure}
\usepackage{enumerate}
\usepackage{cite}
\usepackage{wrapfig}
\usepackage{url}
\allowdisplaybreaks[4] 

\newtheorem{theorem}{Theorem} %
\newtheorem{lemma}{Lemma}

\newtheorem{proposition}{Proposition}

\newtheorem{definition}{Definition}

\newenvironment{proof}{{\it Proof: \enspace}}{\hfill $\blacksquare$\par}

\begin{document}

%
\begin{frontmatter}

\title{Hausdorff measure of the free boundary for the $p$-obstacle problem with subcritical exponents} 


\author{Jing Yu}\ead{jingyu@my.swjtu.edu.cn}\ and 
\author{Jun Zheng}\ead{zhengjun2014@aliyun.com}

\address{{School of Mathematics}, Southwest Jiaotong University, Chengdu, Sichuan 611756, P.~R.~of~China}  

\begin{abstract}                           
This paper investigates a class of $p$-obstacle  problems with subcritical exponents having the form
\begin{align}
	\mathrm{div}\left( a(x)|\nabla u|^{p-2}\nabla u\right) =m_1\chi_{\{u>0\}}-m_2u^{\lambda-1}\chi_{\{u>0\}} \ \text{in}\ \Omega,\notag
\end{align}
where $\Omega  $ is a smooth bounded domain in $ \mathbb{R}^N (N \geq 2)$,  $m_1,m_2$ are positive constants,   the coefficient function $a \in C^2(\Omega)$ has a positive lower bound, and $
2 \leq p < \lambda <p^*:= \frac{Np}{N-p}$ when    $p<N$ and $N \geq 3$, or
$2\leq p < \lambda <+\infty$ when $ N = 2$.
By using    the mountain-pass lemma, combined with the penalty method, we first establish the existence of non-negative weak solutions.  Then, using the De Giorgi-Nash iteration, we prove the $L^\infty$ bound and   local $C^{1,\alpha}$ continuity for the solutions. In addition, we prove local porosity of the free boundary based on the optimal growth and non-degeneracy of solutions near the free boundary.  Furthermore, by means of Lebesgue measure estimates for gradient level sets, we show that    at least one solution  corresponds the free boundary having locally finite $(N-1)$-dimensional Hausdorff measure.
\end{abstract}

\vspace*{-12pt}
\begin{keyword}                            
obstacle problem, free boundary,   Hausdorff measure, porosity, subcritical exponents.
\end{keyword}                              
\end{frontmatter}
\endNoHyper


\section{Introduction}\label{Sec.1}
Free boundary problems described by partial differential equations (PDEs) of elliptic type arise naturally in a wide range of physical models and variational settings, and constitute a central subject in the theory of PDEs. Typical examples include the obstacle problem, Bernoulli-type free boundary problems, chemical reaction  equations,  among others. A crucial theme of studying the free boundary problems is proving the existence and regularity of solutions and analyzing the geometric properties of the free boundary such as its  porosity and Hausdorff measure.

Recently, the authors of \cite{Wang:2025} studied a superlinear obstacle-type free boundary problem of the form
\begin{align}
	\begin{cases}
		\Delta u = \left(1 - u^{\lambda-1}\right) \chi_{\{u > 0\}}  \text{ in } \Omega, \\
		u \geq 0  \text{ in } \Omega, \\
		u \in W_0^{1,2}(\Omega),
	\end{cases}\notag
\end{align}
where $\Omega  $ is a smooth bounded domain in $ \mathbb{R}^N (N \geq 2)$, $\chi_{\{u>0\}}(\cdot)$ is  the standard characteristic function,
and the exponent $\lambda$ satisfies
\begin{align}
	2 < \lambda <
	\begin{cases}
		2^*:= \frac{2N}{N-2}, & N \geq 3, \\
		+\infty, & N = 2.
	\end{cases}\notag
\end{align}
The main challenge in studying regularity theory for such a problem
comes from the coupling of the superlinear term $u^{\lambda-1}  $ and the characteristic function $ \chi_{\{u > 0\}} $. This coupling disrupts the coercivity of the associated energy functional (thus ruling out standard minimization arguments for existence). As a consequence, classical variational methods cannot be applied straightforwardly. To overcome this obstacle, the authors of  \cite{Wang:2025} employed a penalty method, combined with the mountain pass lemma, to establish the existence of nontrivial solutions, and further demonstrated that these solutions possess locally optimal  regularity by means of a suitable monotonicity formula combined with a bootstrap iteration argument. Moreover, non-degeneracy of solutions near the free boundary and local porosity of the free boundary were established. Furthermore, it was shown that the free boundary possesses locally finite $(N-1)$-dimensional Hausdorff measure  and a decomposition property of the free boundary was provided; see \cite{Wang:2025}.

Motivated by the superlinear obstacle-type free boundary problem studied in \cite{Wang:2025}, the aim of the present paper is to investigate a class of obstacle problems with  more involved operators and nonlinear structures. In particular, in this paper, quasilinear operators of the $p$-Laplace type with variable coefficients  are considered and the nonlinear term on the right-hand side of the equation allows for subcritical exponents, both of which bring  more complexities  in the establishment of solutions and analysis of free boundaries. More precisely, we  study the following problem:
\begin{align} \label{main}
	\begin{cases}
		\mathrm{div}\left( a(x)|\nabla u|^{p-2}\nabla u\right)  = m_1 \chi_{\{u>0\}} - m_2 u^{\lambda - 1} \chi_{\{u > 0\}}  \text{ in } \Omega, \\
		u \geq 0  \text{ in } \Omega, \\
		u \in W_0^{1,p}(\Omega),
	\end{cases}
\end{align}
where $\Omega  $ is a smooth bounded domain in $ \mathbb{R}^N (N \geq  2)$, the coefficient function $a \in C^2(\Omega)$ satisfies
\begin{align*}
	0<a_0\leq a \ \text{in}\ \Omega  \ \ \text{and}\ \ \|a\|_{C^2(\Omega)} \leq a_1
\end{align*}
with positive constants $a_0$ and $a_1$,   $m_1, m_2  $ are positive constants, and the exponent $\lambda$ satisfies
\begin{align}\label{p-condition}
	2\leq p < \lambda <
	\begin{cases}
		p^*:= \frac{Np}{N-p}, & N \geq 3, p<N, \\
		+\infty, & N = 2.
	\end{cases}
\end{align}
Formally,  equation \eqref{main} corresponds to the following energy functional
\begin{align}
	\mathcal{J}(u) = \int_{\Omega} \frac{a(x)}{p}|\nabla u|^p + m_1 u^+ - \frac{m_2}{\lambda}(u^+)^{\lambda}   \mathrm{d}x,\notag
\end{align}
where $u^+ = \max\{u, 0\}.$

It is worth mentioning that the authors of \cite{Karp:2000, Lee:2003, Zhao:2012,Zheng:2013} and \cite{Challal:2009, Challal:2012} investigated the porosity and Hausdorff measure of the free boundary for the $p$-Laplace type obstacle problem with $m_2 = 0$ in Sobolev spaces, and for the $A$-Laplace type obstacle problem with $m_2 = 0$ in Orlicz spaces, respectively. The porosity and Hausdorff measure of the free boundary for a $p$-Laplace obstacle problem associated with the Ginzburg-Landau equation, for which the coefficient of the  superlinear term is non-negative,  were studied in \cite{Hu:2025}.   It is also worth mentioning that  the porosity of the free boundary for the $p$-Laplace  non-zero obstacle problems  was established in  \cite{Andersson:2015}. Nevertheless, the regularity, in particular, the Hausdorff measure of the free boundary,  for the $p$-Laplace obstacle problem  with variable coefficients and  general subcritical terms remain unexplored.



In this paper, we first prove the existence and regularity of weak solutions of   the $p$-Laplace type obstacle problem   \eqref{main}, and then  study the local porosity and finite  $(N-1)$-dimensional Hausdorff measure of the free boundary $\Upsilon^+ := \partial\{u>0\} \cap \Omega$. 
%
The main contribution  of this paper is twofold:

\begin{itemize}
	\item By using a regularization and penalty method, we prove the existence and regularity of weak solutions for  a wider class of $p$-Laplace type obstacle problems, which extends relevant results obtained  in \cite{Wang:2025} for the obstacle problem governed by the Laplacian.
	
	\item We overcome the technical difficulties arising from  the general $p$, the variable coefficients, and   the nonlinearity of subcritical terms and show that for each solution the free boundary  is locally porous and  that   at least one solution corresponds the free boundary   having locally finite $(N-1)$-dimensional Hausdorff measure,   extending the  work in the literature \cite{Wang:2025,Karp:2000,Lee:2003,Zhao:2012,Hu:2025,Andersson:2015,Zheng:2013}.
\end{itemize}

The paper is organized as follows. In Section~\ref{Sec.2}, we present some   notations and technical lemmas used for the proofs of  main results. In Section~\ref{Sec.3}, we prove the existence  and uniform $L^\infty$ boundedness of solutions of equation \eqref{main}. In Section~\ref{Sec.4}, we establish the local $C^{0,\alpha}$ and $C^{1,\alpha}$ regularities of the solutions. In Section~\ref{Sec.5}, we prove local porosity of the free boundary based on the optimal growth and non-degeneracy of solutions near the free boundary. In Section~\ref{Sec.6}, we provide some auxiliary results for a corresponding penalized problem, following which we show that for at least one solution, the free boundary has locally finite $(N-1)$-dimensional Hausdorff measure in Section~\ref{Sec.7}.

\section{Notations and technical Lemmas}\label{Sec.2}
$\mathbb{R}$ denotes the set of real numbers, $\mathbb{N} := \{0,1,2,\dots\}$, and $\mathbb{N}^+ := \{1,2,3,\dots\}$.
$B_r(x)$ denotes the ball in    $ \mathbb{R}^N  $ with center $x$ and radius $r>0$. When the center is not emphasized, we simply write $B_r$   for a ball with radius $r$.
For a function $u$ defined on $\Omega$, let 
$\{u > 0\} := \{x \in \Omega : u(x) > 0\}$  and $\Upsilon^+ := \left(\partial\{x \in \Omega : u(x) > 0\}\right) \cap \Omega$.
For $m\in\mathbb{N}^+$ and   measurable set $E \subset \mathbb{R}^m$, $|E| := \mathcal{L}^m(E)$ denotes its $m$-dimensional Lebesgue measure, and $\mathcal{H}^m(E)$ its $m$-dimensional Hausdorff measure, respectively.

The following inequality ensures the monotonicity of the $p$-Laplacian.
\begin{lemma}[{\cite[p.13]{DiBenedetto:1993}}]\label{Lemma6}  For $ p\geq2$, it holds that
	\begin{align}
		\left( |\xi|^{p-2}\xi-|\eta|^{p-2}\eta \right)  \cdot (\xi-\eta) \geq  \frac{1}{2^p}|\xi-\eta|^{p}, \forall \xi,\eta \in \mathbb{R}^N.\notag
	\end{align}	
\end{lemma}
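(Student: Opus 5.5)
We prove the inequality by reducing it to a one-dimensional integral identity and then bounding that integral from below. Set $\zeta_t := \eta + t(\xi-\eta)$ for $t\in[0,1]$ and $A(z):=|z|^{p-2}z$. Since $p\geq 2$, $A$ is $C^1$ away from the origin and locally Lipschitz on $\mathbb{R}^N$. Its derivative is
\begin{align}
	DA(z)h = |z|^{p-2}h + (p-2)|z|^{p-4}(z\cdot h)z, \notag
\end{align}
which gives $DA(z)h\cdot h \geq |z|^{p-2}|h|^2$. The fundamental theorem of calculus along the segment is valid even if the segment passes through $0$: there is at most one such $t$, and $A$ is absolutely continuous along the segment. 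Writing $h:=\xi-\eta$, we get
\begin{align}
	\left(A(\xi)-A(\eta)\right)\cdot(\xi-\eta) = \int_0^1 DA(\zeta_t)h\cdot h\,\mathrm{d}t \geq |\xi-\eta|^2\int_0^1 |\eta+t(\xi-\eta)|^{p-2}\,\mathrm{d}t. \notag
\end{align}

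The key step is a lower bound for this integral. We claim that $|\eta+th|\geq |t-t_0|\,|h|$ for every $t\in[0,1]$, where $t_0\in\mathbb{R}$ is the parameter at which the line $t\mapsto\eta+th$ is closest to the origin. This holds because $\eta+th = (\eta+t_0h)+(t-t_0)h$, and $\eta+t_0h$ is orthogonal to $h$ by the choice of $t_0$, so $|\eta+th|^2 = |\eta+t_0h|^2+(t-t_0)^2|h|^2$.

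Using this, we bound
\begin{align}
	\int_0^1|\eta+th|^{p-2}\,\mathrm{d}t\geq |h|^{p-2}\int_0^1|t-t_0|^{p-2}\,\mathrm{d}t. \notag
\end{align}
The right-hand integral is minimized over $t_0\in\mathbb{R}$ at $t_0=1/2$, by symmetry and convexity of $t_0\mapsto\int_0^1|t-t_0|^{p-2}\,\mathrm{d}t$. At $t_0=1/2$ it equals
\begin{align}
	2\int_0^{1/2}s^{p-2}\,\mathrm{d}s = \frac{2^{2-p}}{p-1}. \notag
\end{align}
Combining the two displays yields
\begin{align}
	\left(A(\xi)-A(\eta)\right)\cdot(\xi-\eta)\geq \frac{2^{2-p}}{p-1}\,|\xi-\eta|^p. \notag
\end{align}

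It remains to check that $\frac{2^{2-p}}{p-1}\geq 2^{-p}$, which is equivalent to $p\leq 5$. So for $2\le p\le5$ we are done, and this is where the argument becomes delicate for larger $p$. Since only the crude constant $2^{-p}$ is required, for $p>5$ we use a different estimate. The identity
\begin{align}
	\left(A(\xi)-A(\eta)\right)\cdot(\xi-\eta) = \frac12\left(|\xi|^{p-2}+|\eta|^{p-2}\right)|\xi-\eta|^2+\frac12\left(|\xi|^{p-2}-|\eta|^{p-2}\right)\left(|\xi|^2-|\eta|^2\right) \notag
\end{align}
holds, and both terms on the right are nonnegative. Dropping the second term leaves $\frac12(|\xi|^{p-2}+|\eta|^{p-2})|\xi-\eta|^2$. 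Since $|\xi-\eta|\le |\xi|+|\eta|\le 2\max\{|\xi|,|\eta|\}$, we have $\max\{|\xi|,|\eta|\}^{p-2}\geq 2^{2-p}|\xi-\eta|^{p-2}$, and therefore
\begin{align}
	\left(A(\xi)-A(\eta)\right)\cdot(\xi-\eta)\geq \frac12\, 2^{2-p}|\xi-\eta|^{p}=2^{1-p}|\xi-\eta|^p\geq 2^{-p}|\xi-\eta|^p. \notag
\end{align}
This second argument works for every $p\geq2$, so it is in fact the cleanest route and we adopt it. The first argument is retained only as a cross-check for $2\le p\le5$. The identity above is the main point to verify, and it follows by expanding both sides.
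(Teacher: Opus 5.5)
Your adopted argument is correct and complete. The identity checks out: both sides expand to $|\xi|^p+|\eta|^p-(|\xi|^{p-2}+|\eta|^{p-2})\,\xi\cdot\eta$. The discarded term is nonnegative because $|\xi|^{p-2}-|\eta|^{p-2}$ and $|\xi|^2-|\eta|^2$ have the same sign when $p\geq 2$. Finally, $|\xi|^{p-2}+|\eta|^{p-2}\geq\max\{|\xi|,|\eta|\}^{p-2}\geq 2^{2-p}|\xi-\eta|^{p-2}$ gives the constant $2^{1-p}$, which is better than the required $2^{-p}$.

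The paper gives no proof of this lemma; it only cites DiBenedetto, so there is no internal argument to compare yours with.

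Between your two routes, the algebraic identity avoids differentiating $|z|^{p-2}z$ near the origin and gives a constant that works uniformly for all $p\geq 2$. The segment-integral route instead gives the sharper constant $2^{2-p}/(p-1)$ when $2\le p<3$. It is also the route that extends to $1<p<2$, where the second term of your identity changes sign.

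There is one minor inaccuracy in the cross-check. For $2<p<3$, the map $t_0\mapsto\int_0^1|t-t_0|^{p-2}\,\mathrm{d}t$ is not convex on $\mathbb{R}$; it is concave on $(1,\infty)$. Its minimum is still at $t_0=1/2$, because the map is symmetric about $1/2$ and its derivative is nonnegative on $[1/2,\infty)$: it equals $t_0^{p-2}-(1-t_0)^{p-2}$ on $[1/2,1]$ and $t_0^{p-2}-(t_0-1)^{p-2}$ on $(1,\infty)$. Your final proof does not rely on this step, so correctness is unaffected.
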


The following  result, which is  called the Du Bois-Reymond lemma, is a fundamental lemma    in calculus of variations.
\begin{lemma}[{\cite[Corollary 4.24]{Brezis:2010}}] \label{lem5}
	Let $f \in L^{1}_{\mathrm{loc}}(\Omega)$ satisfying $\int_{\Omega} f g \mathrm{d}x = 0$ for all $g \in C_{0}^{\infty}(\Omega)$. Then $f = 0$ a.e. in $\Omega$.
\end{lemma}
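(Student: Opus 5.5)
Lemma~\ref{lem5} is the classical Du Bois-Reymond lemma, so the natural route is to reduce it to the density of test functions combined with a local approximation argument. The plan is to show that $\int_{K} f\,\mathrm{d}x = 0$ for every compact set (or every ball) $K \subset\subset \Omega$, or more directly that $\int_\Omega f h\,\mathrm{d}x = 0$ for every $h \in L^\infty(\Omega)$ with compact support in $\Omega$. Testing with $h = \operatorname{sgn}(f)\chi_K$ then gives $\int_K |f|\,\mathrm{d}x = 0$ for every compact $K$. Exhausting $\Omega$ by countably many compacts yields $f = 0$ a.e.

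\textbf{Step 1: mollification.} Fix a compact set $K \subset \Omega$ and let $\delta = \frac{1}{2}\operatorname{dist}(K, \partial\Omega) > 0$, with $K_\delta := \{x : \operatorname{dist}(x, K) \le \delta\} \subset\subset \Omega$. Let $h \in L^\infty(\Omega)$ with $\operatorname{supp} h \subset K$ and $|h| \le 1$. Let $\rho_\varepsilon$ be a standard mollifier and set $g_\varepsilon := \rho_\varepsilon * h$. For $\varepsilon < \delta$ we have $g_\varepsilon \in C_0^\infty(\Omega)$, $\operatorname{supp} g_\varepsilon \subset K_\delta$ and $|g_\varepsilon| \le 1$. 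By hypothesis, $\int_\Omega f g_\varepsilon\,\mathrm{d}x = 0$.

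\textbf{Step 2: passage to the limit.} Since $h \in L^1(\mathbb{R}^N)$, we have $g_\varepsilon \to h$ in $L^1$. Along a subsequence $\varepsilon_k \to 0$ we therefore get $g_{\varepsilon_k} \to h$ a.e. The integrands $f g_{\varepsilon_k}$ are dominated by $|f|\chi_{K_\delta}$, which lies in $L^1$ because $f \in L^1_{\mathrm{loc}}(\Omega)$. Dominated convergence then gives $\int_\Omega f h\,\mathrm{d}x = 0$. Taking $h = \operatorname{sgn}(f)\chi_K$, which is bounded, measurable and compactly supported, yields $\int_K |f| = 0$, so $f = 0$ a.e. on $K$. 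Writing $\Omega = \bigcup_n K_n$ with $K_n := \{x \in \Omega : |x| \le n,\ \operatorname{dist}(x, \partial\Omega) \ge 1/n\}$ completes the argument.

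\textbf{Main obstacle.} The only delicate point is justifying the limit in Step 2. Since $f$ is merely locally integrable, $L^1$ convergence of $g_\varepsilon$ alone is not enough. The fix is to use the uniform bound $|g_\varepsilon| \le 1$ together with the uniformly compact supports (contained in $K_\delta$), and then apply dominated convergence along an a.e.-convergent subsequence. Everything else consists of standard properties of mollifiers.
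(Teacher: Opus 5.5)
Your argument is correct and is essentially the standard proof in the cited source (Brezis, Corollary 4.24), which the paper only cites and does not reproduce. That proof also mollifies a bounded, compactly supported $h$, passes to the limit by dominated convergence along an a.e.-convergent subsequence using $|\rho_\varepsilon * h|\le \|h\|_{L^\infty}$ and supports in a fixed compact subset of $\Omega$, and then tests with $h=\mathrm{sgn}(f)\chi_K$ and exhausts $\Omega$ by compacts.
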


The following lemma will be used in the proof of the $L^\infty$ estimate of weak solutions of  the  obstacle problem \eqref{main}.
\begin{lemma}[\cite{Ladyzhenskaya:1968}] \label{lem1}
	Let $\{g_n\}_{n \in \mathbb{N}}$ be a sequence of non-negative numbers satisfying
	\begin{align}
		g_{n+1} \leq C D^n g_n^{1+\zeta}, \quad n \in \mathbb{N},\notag
	\end{align}
	where $C,\zeta > 0$ and $D > 1$ are constants not depending on $n$. If
	\begin{align}
		g_0 \leq C^{-\frac{1}{\zeta}} D^{-\frac{1}{\zeta^{2}}},\notag
	\end{align}
	then $g_n \to 0$ as $n \to +\infty$.
\end{lemma}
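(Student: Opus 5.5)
The plan is to show by induction that the sequence decays geometrically, $g_n \le g_0 \theta^{n}$ with $\theta := D^{-1/\zeta} \in (0,1)$. Since $D>1$, this gives $g_n \to 0$ as $n\to+\infty$.

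The base case $n=0$ is trivial. For the inductive step, assume $g_n \le g_0 D^{-n/\zeta}$ and apply the recursion:
\begin{align}
g_{n+1} \le C D^n g_n^{1+\zeta} \le C D^n g_0^{1+\zeta} D^{-n(1+\zeta)/\zeta} = \left(C g_0^{\zeta}\right) g_0 D^{-n/\zeta}.\notag
\end{align}
It therefore suffices to have $C g_0^{\zeta} D^{-n/\zeta} \cdot D^{n/\zeta} \le D^{-(n+1)/\zeta} \cdot D^{n/\zeta}$, that is, $C g_0^{\zeta} \le D^{-1/\zeta}$. This is exactly the hypothesis: raising $g_0 \le C^{-1/\zeta} D^{-1/\zeta^2}$ to the power $\zeta$ gives $g_0^\zeta \le C^{-1} D^{-1/\zeta}$. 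Hence $g_{n+1} \le g_0 D^{-(n+1)/\zeta}$, and the induction closes.

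Non-negativity of the $g_n$ is used so that raising to the power $1+\zeta$ preserves the inequality. The only delicate point is choosing the decay rate $\theta = D^{-1/\zeta}$. The factor $D^{n}$ must be absorbed by the extra power $\zeta$ gained at each step, via $\theta^{n(1+\zeta)}D^n = \theta^n$. With that choice the smallness condition on $g_0$ matches the statement exactly, so I expect no real obstacle.
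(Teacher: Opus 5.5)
Your induction $g_n \le g_0 D^{-n/\zeta}$ is correct: the inductive step reduces to $C g_0^{\zeta} \le D^{-1/\zeta}$, which is the hypothesis raised to the power $\zeta$. The paper gives no proof of its own and only cites Ladyzhenskaya, where the standard argument is this same geometric-decay induction (usually written with the threshold $C^{-1/\zeta}D^{-1/\zeta^2}$ as the prefactor in place of $g_0$).
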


The following three results will be used in delicate iterations and comparison arguments with appropriate competitors. Moreover, they will be employed to obtain interior regularity of the gradients of  weak  solutions of  the  obstacle problem \eqref{main}.

\begin{lemma}[{\cite[Lemma A.3]{Zheng:2022}}]\label{lem4}Let $\tau$ be a non-negative function on an interval $(0, R_*]$ with $R_* \leq 1$. Suppose that for all $r$ and $R$ satisfying $ 0<r\leq R\leq R_*$ there holds
	\begin{align*}
		\tau(r) \leq A \left( \frac{r}{R} \right)^\alpha \tau(R) + BR^\beta, 
	\end{align*}
	where $A > 1,B, \alpha$, and $\beta$ are positive constants with $ \alpha>\beta$. Fix $\delta \in (\beta, \alpha)$ and consider $\theta \in (0, 1)$ with $A\theta^\alpha = \theta^\delta$. Suppose that there exists a constant $d > 0$ such that $\tau(r) \leq d\tau(\theta^k R)$ for all non-negative integer $k$ and $r \in [\theta^{k+1} R, \theta^k R]$. Then, there is a positive constant $C$ depending only on $ A,\alpha,\beta, \delta$, and $d$  such that
	\begin{align*}
		\tau(r) \leq C \left( \frac{r}{R} \right)^\sigma (\tau(R) + BR^\sigma)
	\end{align*}
	holds true for any $r,R$, and $\sigma$ satisfying $0<r\leq R\leq R_*$ and $0 < \sigma \leq \beta$. Furthermore, there is a positive constant $D  $ depending only on $B,C, \beta$, and $\tau(R_*)$  such that
	\begin{align*}
		\tau(r) \leq D r^\sigma
	\end{align*}
	holds true for any $r\in (0,R_*]$ and $\sigma\in (0,\beta]$.
\end{lemma}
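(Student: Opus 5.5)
We prove Lemma~\ref{lem4} by a standard iteration along the geometric sequence of radii $\theta^k R$, and then use the comparability hypothesis $\tau(r)\le d\,\tau(\theta^k R)$ to pass from those discrete radii to arbitrary $r$.

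\emph{Step 1: the recursion.} Fix $0<R\le R_*$ and apply the hypothesis with the pair $(\theta^{k+1}R,\theta^kR)$. Using $A\theta^\alpha=\theta^\delta$, this gives
\begin{align*}
\tau(\theta^{k+1}R)\le \theta^\delta\,\tau(\theta^kR)+B\theta^{k\beta}R^\beta .
\end{align*}
Note that $\theta\in(0,1)$ exists, since $\theta^{\alpha-\delta}=1/A<1$ and $\alpha>\delta$.

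\emph{Step 2: unrolling.} Iterating the recursion yields
\begin{align*}
\tau(\theta^{k}R)\le \theta^{k\delta}\tau(R)+BR^\beta\sum_{j=0}^{k-1}\theta^{(k-1-j)\delta}\theta^{j\beta}.
\end{align*}
The sum equals $\theta^{(k-1)\beta}\sum_{i\ge0}\theta^{i(\delta-\beta)}$ up to truncation, so it is bounded by $\theta^{(k-1)\beta}/(1-\theta^{\delta-\beta})$. This is where $\delta>\beta$ is used. Since $\delta>\beta\ge\sigma$ and $\theta<1$, we have $\theta^{k\delta}\le\theta^{k\sigma}$. We also need $\theta^{k\beta}R^\beta\le\theta^{k\sigma}R^\sigma$, which holds because $\theta^kR\le R\le R_*\le1$. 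Combining these,
\begin{align*}
\tau(\theta^kR)\le C_1\theta^{k\sigma}\bigl(\tau(R)+BR^\sigma\bigr),
\end{align*}
with $C_1$ depending on $\theta,\beta,\delta$, and hence on $A,\alpha,\beta,\delta$.

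\emph{Step 3: arbitrary $r$.} For $r\in[\theta^{k+1}R,\theta^kR]$ we have $\theta^{k}\le \theta^{-1}(r/R)$, so $\theta^{k\sigma}\le\theta^{-\sigma}(r/R)^\sigma\le \theta^{-\beta}(r/R)^\sigma$. The comparability hypothesis then gives
\begin{align*}
\tau(r)\le d\,\tau(\theta^kR)\le dC_1\theta^{-\beta}(r/R)^\sigma\bigl(\tau(R)+BR^\sigma\bigr),
\end{align*}
which is the first claim with $C=dC_1\theta^{-\beta}$.

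\emph{Step 4: the second claim.} Take $R=R_*$ in the first claim. Then
\begin{align*}
\tau(r)\le C R_*^{-\sigma}\bigl(\tau(R_*)+BR_*^\sigma\bigr)\,r^\sigma .
\end{align*}
This constant depends on $R_*^{-\sigma}$, so it must still be controlled uniformly in $\sigma\in(0,\beta]$ and in a way that does not degenerate as $R_*$ shrinks. To handle $R_*$, the plan is to treat it as fixed and use $R_*^{-\sigma}\le R_*^{-\beta}$ (valid since $R_*\le1$), so that the final constant $D$ also depends on $R_*$; this is presumably implicit in the statement.

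The main subtlety is exactly this bookkeeping of constants: the inequalities $\theta^{k\beta}R^\beta\le \theta^{k\sigma}R^\sigma$ and $R_*^{-\sigma}\le R_*^{-\beta}$ both rely on $R_*\le1$ and $\sigma\le\beta$. The remaining steps are routine.
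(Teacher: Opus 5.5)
Your proof is correct. It is the standard Giaquinta-type iteration: the recursion along the radii $\theta^kR$ via $A\theta^\alpha=\theta^\delta$, geometric summation using $\delta>\beta$, lowering the exponents to $\sigma$ via $\theta^kR\le R\le 1$, and then the $d$-hypothesis for intermediate radii. The paper gives no proof of its own and only cites the lemma, so there is nothing further to compare.

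Your concern in Step 4 is also justified, and the extra dependence cannot be avoided: $D$ must depend on $R_*$. Indeed, $\tau(r)=T(r/R_*)^\alpha$ satisfies every hypothesis with $d=1$ and $\tau(R_*)=T$, yet it forces $D\ge T R_*^{-\sigma}$.
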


\begin{lemma}[{\cite[Lemma 3.1]{Zheng:2017}}] \label{lem2}
	Let $u \in W^{1,p}(\Omega)$, $B_R \subset \Omega$. If $v$ is a bounded weak solution of the $p$-harmonic equation
	\begin{align}
		\mathrm{div} (|\nabla v|^{p-2}\nabla v) = 0 \text{ in } B_R, \quad v - u \in W_0^{1,p}(B_R),\notag
	\end{align}
	then for any $\mu \in (0, N)$, there exists a positive constant $C$ depending only on $N$, $p$, $\mu$,  and $\|v\|_{L^\infty(B_R)}$ such that
	\begin{align}
		\int_{B_R} |\nabla u - \nabla v|^p \mathrm{d}x \leq C \int_{B_R} (|\nabla u|^p - |\nabla v|^p) \mathrm{d}x + CR^{\frac{\mu}{2}} \left( \int_{B_R} (|\nabla u|^p - |\nabla v|^p) \mathrm{d}x \right)^{\frac{1}{2}}.\notag
	\end{align}
\end{lemma}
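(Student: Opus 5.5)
The plan is to compare $u$ with its $p$-harmonic replacement $v$ through a single integral identity. First, $v$ minimizes the Dirichlet $p$-energy in $u+W_0^{1,p}(B_R)$, so the weak formulation $\int_{B_R}|\nabla v|^{p-2}\nabla v\cdot\nabla\varphi\,\mathrm{d}x=0$ holds for every $\varphi\in W_0^{1,p}(B_R)$. Taking $\varphi=u-v$ gives
\begin{align*}
\int_{B_R}|\nabla v|^{p-2}\nabla v\cdot(\nabla u-\nabla v)\,\mathrm{d}x=0 .
\end{align*}
From here I will expand $|\nabla u|^p-|\nabla v|^p$ around $\nabla v$. By convexity of $\xi\mapsto|\xi|^p$ and the pointwise inequality behind Lemma~\ref{Lemma6}, for $p\ge 2$ there is a constant $c_p>0$ with
\begin{align*}
|\xi|^p\ge|\eta|^p+p|\eta|^{p-2}\eta\cdot(\xi-\eta)+c_p|\xi-\eta|^p ,\qquad \xi,\eta\in\mathbb{R}^N .
\end{align*}
One way to obtain this is to integrate $\frac{d}{dt}|\eta+t(\xi-\eta)|^p$ from $0$ to $1$ and apply Lemma~\ref{Lemma6} to the pairs $\eta+t(\xi-\eta)$ and $\eta$.

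Next I apply the inequality with $\xi=\nabla u$ and $\eta=\nabla v$ and integrate over $B_R$. The middle term drops out by the identity above, leaving
\begin{align*}
\int_{B_R}|\nabla u-\nabla v|^p\,\mathrm{d}x\le C\int_{B_R}(|\nabla u|^p-|\nabla v|^p)\,\mathrm{d}x .
\end{align*}
The second term on the right-hand side of the claimed estimate is nonnegative, so this already implies the lemma for any $\mu\in(0,N)$. The only requirement is that $\int_{B_R}(|\nabla u|^p-|\nabla v|^p)\,\mathrm{d}x\ge 0$, so that the square root is defined, and this holds by minimality of $v$. The constant $C$ depends only on $p$, which is consistent with the stated dependence.

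The main obstacle is establishing the convexity inequality with a uniform constant $c_p$. In the integration argument, the increment $t(\xi-\eta)$ brings a factor $t^{p-1}$, and $\int_0^1 t^{p-1}\,\mathrm{d}t=1/p$, so the constant comes out as $c_p=2^{-p}$ times a factor depending only on $p$. If this route became awkward, I would fall back on a Hölder splitting between the regions $\{|\nabla u-\nabla v|\le|\nabla v|\}$ and its complement. On the first region I would control the integrand using the local gradient bound $\int_{B_r}|\nabla v|^p\le C r^{\mu}$ for $p$-harmonic $v$, which depends on $\|v\|_{L^\infty}$. That is where the $R^{\mu/2}(\cdots)^{1/2}$ term would arise.
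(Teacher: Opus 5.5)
Your argument is correct under the paper's standing assumption $p\ge 2$ (see \eqref{p-condition}), and it proves more than is claimed. The paper gives no proof of its own; it only imports the estimate from \cite[Lemma 3.1]{Zheng:2017}, so the comparison is with that cited form.

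Write $\zeta_t=\eta+t(\xi-\eta)$. Applying Lemma~\ref{Lemma6} to the pair $\zeta_t,\eta$ gives $p\bigl(|\zeta_t|^{p-2}\zeta_t-|\eta|^{p-2}\eta\bigr)\cdot(\xi-\eta)\ge p\,2^{-p}t^{p-1}|\xi-\eta|^p$. Hence $c_p=2^{-p}$ exactly, since the factor $p$ from differentiating cancels $\int_0^1t^{p-1}\,\mathrm{d}t=1/p$. The Euler--Lagrange identity with $\varphi=u-v$ is valid for all $\varphi\in W^{1,p}_0(B_R)$ by density, because $|\nabla v|^{p-2}\nabla v\in L^{p/(p-1)}$. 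Combining the two, you get
\begin{align*}
\int_{B_R}|\nabla u-\nabla v|^p\,\mathrm{d}x\le 2^p\int_{B_R}\bigl(|\nabla u|^p-|\nabla v|^p\bigr)\,\mathrm{d}x .
\end{align*}
The constant depends only on $p$, and neither the boundedness of $v$ nor the parameter $\mu$ plays any role.

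The square-root term $CR^{\mu/2}(\cdots)^{1/2}$, and the dependence on $\mu$ and $\|v\|_{L^\infty(B_R)}$, are the typical price of a regime where $\xi\mapsto|\xi|^p$ is not $p$-uniformly convex, namely $1<p<2$. There the Taylor remainder only controls $|\xi-\eta|^2(|\xi|+|\eta|)^{p-2}$. One then has to split according to the size of $|\nabla u-\nabla v|$ relative to $|\nabla v|$ and use Cauchy--Schwarz against $(\int_{B_R}|\nabla v|^p)^{1/2}$, which is essentially your fallback.

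So your proof does not cover $1<p<2$, but that range is irrelevant in this paper. For $p\ge 2$ your route is both simpler and sharper. In particular, it would make the $R^{\mu/2}$ term and the choice $\mu\in(N-1,N)$ unnecessary in the proof of Theorem~\ref{C^1}.

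One caution about the fallback, should you ever need it. A bound $\int_{B_R}|\nabla v|^p\,\mathrm{d}x\le CR^{\mu}$ on the whole ball does not follow from interior estimates for bounded $p$-harmonic functions. It requires information on $u$, for example a Morrey bound on $\int_{B_R}|\nabla u|^p$, transferred via $\int_{B_R}|\nabla v|^p\le\int_{B_R}|\nabla u|^p$.
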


\begin{lemma}[{\cite[Lemma 4.1]{Leitao:2015}}] \label{lem3}
	Let $u \in W^{1,p}(B_R)$, $B_R \subset \Omega$. If $v \in W^{1,p}(B_R)$ is a weak solution the $p$-harmonic equation
	\begin{align}
		\mathrm{div} (|\nabla v|^{p-2}\nabla v) = 0 \text{ in } B_R,\notag
	\end{align}
	then there exist a constant $\delta \in (0, 1)$ and a positive constant $C$ depending only on $N$ and $p$ such that
	\begin{align}
		\int_{B_r} |\nabla u - (\nabla u)_r|^p \mathrm{d}x \leq C \left( \frac{r}{R} \right)^{N+\delta} \int_{B_R} |\nabla u - (\nabla u)_R|^p \mathrm{d}x + C \int_{B_R} |\nabla u - \nabla v|^p \mathrm{d}x,\forall r \in (0, R]. \notag
	\end{align}
	
\end{lemma}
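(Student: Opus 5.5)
Lemma \ref{lem3} is a Campanato-type decay estimate. The natural route is to compare $\nabla u$ with $\nabla v$ and to borrow the excess decay of $p$-harmonic gradients. The key input is the interior $C^{1,\beta}$ regularity of $p$-harmonic functions (Uraltseva, DiBenedetto, Tolksdorf, Lewis). In Campanato form it says: there exist $\beta\in(0,1)$ and $C=C(N,p)$ such that
\begin{align*}
\int_{B_r}|\nabla v-(\nabla v)_r|^p\,\mathrm{d}x\leq C\left(\frac{r}{R}\right)^{N+p\beta}\int_{B_R}|\nabla v-(\nabla v)_R|^p\,\mathrm{d}x,\qquad 0<r\leq R.
\end{align*}
This follows from the oscillation estimate $\mathrm{osc}_{B_r}\nabla v\leq C(r/R)^{\beta}\big(\fint_{B_{R/2}}|\nabla v-\xi|^p\big)^{1/p}$, valid for every $\xi\in\mathbb{R}^N$. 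That estimate holds for the $p$-Laplacian because the standard $C^{1,\beta}$ theory is stated with the excess relative to an arbitrary constant vector. For $r\geq R/2$ the bound is trivial. I will set $\delta:=\min\{p\beta,1/2\}\in(0,1)$, which is allowed since $r\leq R$ makes $(r/R)^{N+p\beta}\leq (r/R)^{N+\delta}$.

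With this in hand, the plan has three steps.

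First, use that $z\mapsto\int_{B_r}|\nabla u-z|^p$ is minimized, up to the factor $2^{p}$, at $z=(\nabla u)_r$. This gives
\begin{align*}
\int_{B_r}|\nabla u-(\nabla u)_r|^p\leq 2^{p-1}\Big(\int_{B_r}|\nabla v-(\nabla v)_r|^p+C\int_{B_r}|\nabla u-\nabla v|^p\Big).
\end{align*}
The second term uses Jensen: $|(\nabla u)_r-(\nabla v)_r|^p|B_r|\leq\int_{B_r}|\nabla u-\nabla v|^p$.

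Second, apply the $p$-harmonic decay above to the first term.

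Third, go back from $v$ to $u$ on $B_R$ by the same triangle inequality:
\begin{align*}
\int_{B_R}|\nabla v-(\nabla v)_R|^p\leq C\int_{B_R}|\nabla u-(\nabla u)_R|^p+C\int_{B_R}|\nabla u-\nabla v|^p.
\end{align*}
Since $(r/R)^{N+\delta}\leq1$, the resulting term $(r/R)^{N+\delta}\int_{B_R}|\nabla u-\nabla v|^p$ is absorbed into $C\int_{B_R}|\nabla u-\nabla v|^p$. Combining the three steps gives the stated inequality with constants depending only on $N$ and $p$.

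The main obstacle is the second step, which is the scale-invariant Campanato decay for $p$-harmonic gradients with a constant independent of $\|v\|$. The decay must be for the excess, not just a bound by $\fint|\nabla v|^p$, which would only give exponent $N$. This is where the degenerate $C^{1,\beta}$ theory enters. I would cite it rather than reprove it: the homogeneity and translation invariance of $v\mapsto v-\xi\cdot x$ do \emph{not} hold for the $p$-Laplacian, so the excess version must be taken from the literature (e.g. DiBenedetto--Manfredi, or Lieberman's form). Everything else is elementary.
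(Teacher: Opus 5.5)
Your argument is correct and is the standard proof of this estimate. The paper gives no proof of its own; it imports the lemma from Lemma 4.1 of Leit\~ao--de Queiroz--Teixeira. The argument behind that citation is your reduction, via quasi-minimality of the mean and the triangle inequality, to the Campanato excess decay $\int_{B_r}|\nabla v-(\nabla v)_r|^p\leq C(r/R)^{N+\delta}\int_{B_R}|\nabla v-(\nabla v)_R|^p$ for $p$-harmonic $v$.

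One correction: the classical $C^{1,\beta}$ estimates are stated relative to $\xi=0$, i.e.\ in terms of $\sup|\nabla v|$ or the average of $|\nabla v|^p$, not relative to an arbitrary constant vector. The excess form you need is therefore a separate result, proved via a degenerate/nondegenerate alternative (e.g.\ DiBenedetto--Manfredi, Lieberman), and it must be cited, as you yourself conclude at the end.
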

%

\section{Existence  and uniform boundedness  of solutions}\label{Sec.3}

In this section, we   establish the existence   and uniform $L^\infty$ boundedness of solutions  to the  obstacle problem \eqref{main}.

The existence result is stated as below.
\begin{proposition}\label{exi}
	The equation \eqref{main} admits at least one  weak solution.
\end{proposition}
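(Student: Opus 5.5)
We follow the strategy of \cite{Wang:2025}: regularize the nonsmooth term $m_1u^+$, find a mountain-pass critical point of the regularized functional, and let the regularization parameter tend to zero. For $\varepsilon\in(0,1)$, let $\beta_\varepsilon(t)$ be a smooth nondecreasing approximation of $\chi_{\{t>0\}}$, for instance $\beta_\varepsilon(t)=0$ for $t\le 0$, $\beta_\varepsilon(t)=1$ for $t\ge\varepsilon$, with $0\le\beta_\varepsilon\le1$. Set $B_\varepsilon(t)=\int_0^t\beta_\varepsilon(s)\,\mathrm{d}s$, so that $0\le B_\varepsilon(t)\le t^+$. The penalized functional is
\begin{align*}
\mathcal{J}_\varepsilon(u)=\int_\Omega \frac{a(x)}{p}|\nabla u|^p+m_1B_\varepsilon(u)-\frac{m_2}{\lambda}(u^+)^\lambda\,\mathrm{d}x,\quad u\in W^{1,p}_0(\Omega).
\end{align*}
It belongs to $C^1(W^{1,p}_0(\Omega))$, because $\lambda<p^*$ and the embedding $W^{1,p}_0\hookrightarrow L^\lambda$ is compact.

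\textbf{Mountain-pass geometry.} We have $\mathcal{J}_\varepsilon(0)=0$. Using $B_\varepsilon\ge0$ and the Sobolev inequality,
\begin{align*}
\mathcal{J}_\varepsilon(u)\ge \frac{a_0}{p}\|u\|^p-C\|u\|^\lambda.
\end{align*}
Since $\lambda>p$, this gives $\mathcal{J}_\varepsilon\ge\rho>0$ on a small sphere $\|u\|=r$, with $r$ and $\rho$ independent of $\varepsilon$. Next fix $0\le\phi\in C_0^\infty$, $\phi\ne0$. Then $\mathcal{J}_\varepsilon(t\phi)\le Ct^p+Ct-ct^\lambda\to-\infty$, uniformly in $\varepsilon$, so there is an $e$ with $\|e\|>r$ and $\mathcal{J}_\varepsilon(e)<0$.

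\textbf{Palais--Smale condition.} Let $(u_n)$ be a (PS) sequence. The Ambrosetti--Rabinowitz computation gives
\begin{align*}
\mathcal{J}_\varepsilon(u_n)-\tfrac1\lambda\langle\mathcal{J}'_\varepsilon(u_n),u_n\rangle\ge\left(\tfrac1p-\tfrac1\lambda\right)a_0\|u_n\|^p-m_1\int|u_n|.
\end{align*}
Here we used $|B_\varepsilon(u)|\le|u|$ and $|\beta_\varepsilon(u)u|\le|u|$. This bounds $(u_n)$. By compactness of the embedding into $L^\lambda$ and $L^1$, the lower-order terms converge. The inequality in Lemma~\ref{Lemma6}, applied to $\langle\mathcal{J}'_\varepsilon(u_n)-\mathcal{J}'_\varepsilon(u),u_n-u\rangle$ with the weight $a\ge a_0$, then yields strong convergence ($(S_+)$ property). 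The mountain pass lemma produces a critical point $u_\varepsilon$ with $\rho\le\mathcal{J}_\varepsilon(u_\varepsilon)=c_\varepsilon\le\max_{t}\mathcal{J}_1(te)$, a bound uniform in $\varepsilon$ since $B_\varepsilon\le t^+$.

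\textbf{Nonnegativity and uniform bounds.} Testing the equation with $u_\varepsilon^-$ makes the right-hand side vanish, because $\beta_\varepsilon$ and $(u^+)^{\lambda-1}$ are zero where $u<0$. Hence $\int a|\nabla u_\varepsilon^-|^p=0$ and $u_\varepsilon\ge0$. Repeating the AR computation at the critical point bounds $\|u_\varepsilon\|_{W^{1,p}_0}$ uniformly in $\varepsilon$. A uniform $L^\infty$ bound would help the limit passage; we obtain it by De Giorgi iteration with Lemma~\ref{lem1}, though it is not strictly needed.

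\textbf{Passage to the limit, the main obstacle.} Extract $u_\varepsilon\rightharpoonup u$ weakly in $W^{1,p}_0$ and strongly in $L^\lambda$. Testing with $u_\varepsilon-u$ and using Lemma~\ref{Lemma6} again (the terms $\beta_\varepsilon\le1$ are harmless) gives $\nabla u_\varepsilon\to\nabla u$ strongly in $L^p$. The nonlinear flux therefore converges in $L^{p'}$, and $u\ge0$.

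The delicate term is $\beta_\varepsilon(u_\varepsilon)$. It converges weak-$*$ in $L^\infty$ to some $\gamma\in[0,1]$ with $\gamma=1$ a.e. on $\{u>0\}$. On $\{u=0\}$ we need $\gamma\cdot0$-type information. Here we use that $\nabla u=0$ a.e. on $\{u=0\}$. Combined with a Stampacchia argument, the limit equation then yields $\mathrm{div}(a|\nabla u|^{p-2}\nabla u)=0$ a.e. on $\{u=0\}$; this requires $W^{2,\cdot}$ or locally Sobolev regularity of the flux, which we obtain from the uniform $L^\infty$ bound, giving a bounded right-hand side. It follows that $(m_1\gamma-m_2u^{\lambda-1})\chi_{\{u=0\}}=m_1\gamma\chi_{\{u=0\}}=0$. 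Hence the right-hand side equals $m_1\chi_{\{u>0\}}-m_2u^{\lambda-1}\chi_{\{u>0\}}$, and Lemma~\ref{lem5} identifies the limit equation.

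Nontriviality is not required by the statement. If desired, it follows from $\mathcal{J}_\varepsilon(u_\varepsilon)\ge\rho$ together with strong convergence.
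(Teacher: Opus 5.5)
Your proof is correct and shares the paper's skeleton. Both penalize $m_1u^+$, take a mountain-pass critical point $u_\varepsilon$ of $\mathcal{J}_\varepsilon$ (with (PS) from the Ambrosetti--Rabinowitz identity and Lemma~\ref{Lemma6}), bound $c_\varepsilon$ and $\|u_\varepsilon\|_{W^{1,p}_0}$ uniformly, and get strong $W^{1,p}_0$ convergence by testing with $u_\varepsilon-u$. You depart from the paper in two places.

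\textbf{Nonnegativity.} You prove $u_\varepsilon\ge0$ directly by testing with $u_\varepsilon^-$. The paper proves only $u\ge0$, after the limit, by testing the limit equation with $u-u^+$. Your version is shorter. It also supplies the fact $u_\varepsilon\ge0$ used in Section~\ref{Sec.6}, which the paper infers in the wrong direction from $u\ge0$.

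\textbf{Identifying the limit of the penalty term.} This is the more substantive difference. The paper passes to the limit in $\chi_\varepsilon(u_\varepsilon)$ in one line, as though strong $W^{1,p}_0$ convergence forced $\chi_\varepsilon(u_\varepsilon)\to\chi_{\{u>0\}}$. On $\{u=0\}$ it does not: $u_\varepsilon$ may lie in $(0,\varepsilon)$ there, so the weak-$*$ limit can be any function with values in $[0,1]$. Your identification of $\gamma$ is what actually closes this step. You get $\gamma=1$ on $\{u>0\}$ by a.e.\ convergence. You get $\gamma=0$ a.e.\ on $\{u=0\}$ because $\nabla u=0$ there, and a $W^{1,1}_{\mathrm{loc}}$ flux has, by Stampacchia, vanishing divergence a.e.\ on its zero set.

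The price is an imported second-order regularity theorem. State it precisely instead of hedging between ``$W^{2,\cdot}$'' and flux regularity. Since $a\in C^2$ and $a\ge a_0$, dividing the distributional identity by $a$ reduces you to $\mathrm{div}(|\nabla u|^{p-2}\nabla u)=g$ with $g=\bigl(m_1\gamma-m_2u^{\lambda-1}-\nabla a\cdot|\nabla u|^{p-2}\nabla u\bigr)/a\in L^\infty_{\mathrm{loc}}$. This uses $u\in L^\infty$ and $\nabla u\in L^\infty_{\mathrm{loc}}$, so the $L^\infty$ bound you call optional is in fact needed. For such $g$ one has $|\nabla u|^{p-2}\nabla u\in W^{1,2}_{\mathrm{loc}}$ (e.g.\ Cianchi--Maz'ya). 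Alternatively, \eqref{5.16} together with \eqref{c1} bounds $|\nabla u_\varepsilon|^{p-2}\nabla u_\varepsilon$ in $W^{1,2}_{\mathrm{loc}}$ uniformly, and this passes to the limit by weak compactness. That route still requires justifying the differentiation of the penalized equation.

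\textbf{Minor slip.} The uniform bound on $c_\varepsilon$ should be $\max_t\mathcal{J}(te)$, i.e.\ with $B_\varepsilon$ replaced by $t^+$, not $\max_t\mathcal{J}_1(te)$. For the usual rescaled family, $B_\varepsilon\ge B_1$ when $\varepsilon<1$.
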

\begin{proof}  We employ the penalty method and the mountain pass lemma (see \cite{Ambrosetti:1973}) to prove this result. More specifically,
	we consider the associated  penalized problem, namely,  for $0 < \varepsilon < 1$, we consider the equation
	\begin{align} \label{main1}
		\begin{cases}
			\operatorname{div}\left( a(x)|\nabla u|^{p-2}\nabla u\right)
			= m_1 \chi_\varepsilon(u) - m_2 (u^{+})^{\lambda-1}
			\text{ in } \Omega, \\
			u = 0  \text{ on } \partial\Omega,
		\end{cases}
	\end{align}
	where $\chi_\varepsilon(s)$ is a smooth monotone approximation of the Heaviside function $\chi_{\{s>0\}}$ satisfying
	\begin{align}
		\chi_\varepsilon'(s) \geq 0, \quad \chi_\varepsilon(s) = 0 \text{ for } s \leq 0, \quad \chi_\varepsilon(s) = 1 \text{ for } s \geq\varepsilon.\notag
	\end{align}
	We claim that    a critical point, denoted by $ u_\varepsilon $, of the functional
	\begin{align}\label{mainb1}
		\mathcal{J}_{\varepsilon}(u) = \int_{\Omega}  \frac{a(x)}{p}|\nabla u|^p
		+ m_1 \Phi_{\varepsilon}(u)  - \frac{m_2}{\lambda}(u^+)^{\lambda}
		\mathrm{d}x
	\end{align}
	is a solution of equation \eqref{main1}, where $\Phi_\varepsilon(s) = \int_{-\infty}^s \chi_\varepsilon(t)\mathrm{d}t \geq 0$.
	
	Indeed, noting  that $\mathcal{J}_\varepsilon \in C^1(W_0^{1,p}(\Omega), \mathbb{R})$, it suffices to show that for sufficiently small $\varepsilon > 0$, the functional $\mathcal{J}_\varepsilon(u)$ admits a mountain pass geometry.
	
	First, it is clear that $\mathcal{J}_\varepsilon(0) = 0$. {Let  $S$ be  the optimal embedding constant of $W^{1,p}_0(\Omega)\hookrightarrow L^{\lambda}(\Omega)$.} For any $u\in W_0^{1,p}(\Omega)$ satisfying $\left( \int_{\Omega} |\nabla u|^p  \mathrm{d}x \right)^{\frac{1}{p}} = \left( \frac{a_0}{m_2} \right)^{\frac{1}{\lambda - p}} S^{\frac{\lambda}{\lambda - p}}$, by the Sobolev embedding theorem, we deduce that
	\begin{align}
		\mathcal{J}_{\varepsilon}(u) \geq& \int_{\Omega} \frac{a_0}{p}|\nabla u|^p -\frac{m_2}{\lambda}(u^+)^{\lambda}  \mathrm{d}x
		\notag\\ \geq
		&\frac{a_0}{p}\int_{\Omega} |\nabla u|^p \mathrm{d}x -\frac{m_2}{\lambda S^\lambda} \left( \int_{\Omega} |\nabla u|^p \mathrm{d}x\right)^\frac{\lambda}{p}
		\notag\\ =
		&\frac{1}{p}a_0^{\frac{\lambda}{\lambda-p}}m_2^{\frac{p}{p-\lambda}}S^{\frac{\lambda p}{\lambda-p}} - \frac{1}{\lambda}a_0^{\frac{\lambda}{\lambda-p}}m_2^{\frac{p}{p-\lambda}}S^{\frac{\lambda p}{\lambda-p}}
		\notag\\ =
		& \frac{\lambda-p}{\lambda p}a_0^{\frac{\lambda}{\lambda-p}}m_2^{\frac{p}{p-\lambda}}S^{\frac{\lambda p}{\lambda-p}}.\notag
	\end{align} 	
	
	Next, using the properties of $\chi_\varepsilon(s)$, we obtain
	\begin{align}
		\mathcal{J}_{\varepsilon}(tu) = &\int_{\Omega} \frac{a(x)t^p}{p}|\nabla u|^p + m_1 \Phi_{\varepsilon}(tu) -\frac{m_2t^\lambda}{\lambda}(u^+)^{\lambda}  \mathrm{d}x
		\notag\\ \leq
		&\frac{a_1t^p}{p}\int_{\Omega} |\nabla u|^p \mathrm{d}x + m_1\int_{\Omega}( |tu|+\varepsilon)\mathrm{d}x -\frac{m_2t^\lambda}{\lambda}\int_{\Omega}(u^+)^{\lambda}  \mathrm{d}x, \forall t>0, \label{1.6}
	\end{align}
	which, along with  $   \lambda  >p$,     implies that $\mathcal{J}(t u) \to -\infty$ as $t \to +\infty$.
	
	Therefore, the mountain pass lemma ensures that there exists a Palais-Smale  sequence for $	\mathcal{J}_{\varepsilon}(u)$ at the level {$c \geq  \frac{\lambda-p}{\lambda p}a_0^{\frac{\lambda}{\lambda-p}}m_2^{\frac{p}{p-\lambda}}S^{\frac{\lambda p}{\lambda-p}}  $.}

	In addition, we need to verify the Palais-Smale compactness condition. Suppose that there exists a Palais-Smale  sequence $\{u_n\}$ {such that $\mathcal{J}_\varepsilon(u_n)$} is bounded, i.e., $|\mathcal{J}_\varepsilon(u_n)| \leq M$ with some constant $M>0$, and $\mathcal{J}_\varepsilon'(u_n) \to 0$ in $\left( W^{1,p}_0(\Omega)\right)^*$. We first prove that $\{u_n\}$ is bounded in $W^{1,p}_0(\Omega)$,  and then  show the existence of a strongly convergent subsequence. Indeed, from \eqref{mainb1}, we obtain
	\begin{align}\label{1.8}
		\mathcal{J}_{\varepsilon}(u_n) = \int_{\Omega} \frac{a(x)}{p}|\nabla u_n|^p + m_1 \Phi_{\varepsilon}(u_n) -\frac{m_2}{\lambda}(u_n^+)^{\lambda}  \mathrm{d}x.
	\end{align}
	From \eqref{main1}, we obtain
	\begin{align}\label{1.9}
		\frac{1}{\lambda} \left\langle \mathcal{J}_\varepsilon'(u_n),u_n \right\rangle =  \int_{\Omega} \frac{a(x)}{\lambda}|\nabla u_n|^p +\frac{m_1}{\lambda}\chi_\varepsilon(u_n)u_n-\frac{m_2}{\lambda}(u_n^+)^{\lambda}  \mathrm{d}x.
	\end{align}
	Combining \eqref{1.8} and \eqref{1.9} yields
	\begin{align}
		\mathcal{J}_\varepsilon(u_n) - \frac{1}{\lambda} \langle \mathcal{J}_\varepsilon'(u_n), u_n \rangle
		\geq
		&\int_\Omega \left( \frac{a(x)}{p} - \frac{a(x)}{\lambda} \right)|\nabla u_n|^p \mathrm{d}x - \int_\Omega  \frac{m_1}{\lambda} \chi_\varepsilon(u_n)u_n  \mathrm{d}x
		+ \int_\Omega m_1 \Phi_\varepsilon(u_n)  \mathrm{d}x
		\notag\\\geq
		& \int_\Omega \left( \frac{a_0}{p} - \frac{a_0}{\lambda} \right)|\nabla u_n|^p \mathrm{d}x - \int_\Omega \frac{m_1}{\lambda} \chi_\varepsilon(u_n)u_n  \mathrm{d}x
		\notag \\\geq
		& \left( \frac{a_0}{p} - \frac{a_0}{\lambda} \right)\int_\Omega |\nabla u_n|^p \mathrm{d}x -C\int_\Omega |u_n| \mathrm{d}x , \label{1.10}
	\end{align} 	
	where    $C$ is a positive  constant depending only on $N$, $\lambda$, $m_1$, and $\Omega$.
	Since $\mathcal{J}_\varepsilon'(u_n) \to 0 $, for any $\varphi \in W^{1,p}_{0}(\Omega) $, it holds that
	\begin{align}
		|\langle \mathcal{J}_\varepsilon'(u_n), \varphi \rangle|\leq & \|\mathcal{J}_\varepsilon'(u_n)\|_{\left( W^{1,p}_0(\Omega)\right)^*} \|\varphi\|_{W^{1,p}_{0}(\Omega)}
		=
		o(1)\|\varphi\|_{W^{1,p}_{0}(\Omega)}. \label{1.11}
	\end{align}
	Substituting $\varphi = u_n$ into \eqref{1.11}, we get
	\begin{align}\label{1.12}
		\frac{1}{\lambda} |\langle \mathcal{J}_\varepsilon'(u_n), u_n \rangle| \leq o(1) \|u_n\|_{W^{1,p}_{0}(\Omega)}.
	\end{align}
	Using \eqref{1.12} and the condition $|\mathcal{J}_\varepsilon(u_n)| \leq M$, we infer that
	\begin{align}
		\mathcal{J}_\varepsilon(u_n) -	\frac{1}{\lambda} \langle \mathcal{J}_\varepsilon'(u_n), u_n \rangle \leq&  |\mathcal{J}_\varepsilon(u_n)| + \frac{1}{\lambda}|\langle \mathcal{J}_\varepsilon'(u_n), u_n \rangle|
		\notag\\\leq
		& o(1) \|u_n\|_{W^{1,p}_{0}(\Omega)}+M. \label{1.13}
	\end{align}
	Combining \eqref{1.10}, \eqref{1.13}, and Young's inequality with $\varepsilon_1,\varepsilon_2>0$, we obtain
	\begin{align}
		\left( \frac{a_0}{p} - \frac{a_0}{\lambda} \right)\int_\Omega |\nabla u_n|^p \mathrm{d}x \leq&  C\int_\Omega |u_n| \mathrm{d}x +o(1) \|u_n\|_{W^{1,p}_{0}(\Omega)}+ C
		\notag\\ \leq
		&  C\varepsilon_1\int_\Omega |u_n|^p \mathrm{d}x +o(1)\varepsilon_2\left( \int_\Omega |u_n|^p\mathrm{d}x + \int_\Omega |\nabla u_n|^p\mathrm{d}x\right) + C_{\varepsilon_1 ,\varepsilon_2}
		\notag\\ \leq
		&  C(\varepsilon_1+o(1)\varepsilon_2)\int_\Omega |\nabla u_n|^p \mathrm{d}x + C_{\varepsilon_1 ,\varepsilon_2}, \label{1.14}
	\end{align}
	where     $C$ is a  positive constant depending only on $N$, $p$, $\lambda$, $m_1$, $M$, and $\Omega$, and $C_{\varepsilon_1 ,\varepsilon_2}$ is a  positive constant depending additionally on $\varepsilon_1$ and $\varepsilon_2$.
	By choosing sufficiently small $\varepsilon_1,\varepsilon_2$  and using \eqref{1.14}, we obtain
	\begin{align}
		\int_\Omega |\nabla u_n|^p \mathrm{d}x \leq C,\notag
	\end{align}
	where     $C$ is a  positive constant depending only on  $N$, $p$, $\lambda$, $m_1$, $a_0$, $M$,    and $\Omega$.
	Therefore, the sequence $\{u_n\}$ is uniformly bounded in $W^{1,p}_0(\Omega)$. By the embedding theorem, there exist a subsequence, still denoted by $\{u_n\}$, and a function $u_\varepsilon \in W^{1,p}_0(\Omega)$ such that
	\begin{align}\label{1.15}
		u_n \rightharpoonup u_\varepsilon \text{ in } W^{1,p}_0(\Omega) \ \ \text{and}\ \
		u_n \to u_\varepsilon \text{ in } L^{p^*}(\Omega).
	\end{align}
	Since $\mathcal{J}_\varepsilon'(u_n) \to 0$, it holds that
	\begin{align} \label{1.16}
		\left| \langle \mathcal{J}_\varepsilon'(u_n), u_n - u_\varepsilon \rangle \right| \to 0.
	\end{align}
	By \eqref{1.16} and Lemma~\ref{Lemma6}, we deduce that
	\begin{align}
		0 \leftarrow &\left| \langle \mathcal{J}_\varepsilon'(u_n), u_n - u_\varepsilon \rangle \right|
		\notag \\
		=& \int_{\Omega}  a(x) |\nabla u_n|^{p-2} \nabla u_n (\nabla u_n - \nabla u_\varepsilon) + m_1 \chi_{\varepsilon}(u_n)(u_n - u_\varepsilon) - m_2 (u_n^+)^{\lambda - 2} u_n^+(u_n - u_\varepsilon)  \mathrm{d}x
		\notag\\=
		& \int_{\Omega}  a(x) \left( |\nabla u_n|^{p-2} \nabla u_n -  |\nabla u_\varepsilon|^{p-2} \nabla u_\varepsilon\right)  (\nabla u_n - \nabla u_\varepsilon)\mathrm{d}x
		\notag\\
		&+ \int_{\Omega}a(x)|\nabla u_\varepsilon|^{p-2} \nabla u_\varepsilon (\nabla u_n - \nabla u_\varepsilon)  \mathrm{d}x
		\notag\\
		&+  \int_{\Omega}m_1 \chi_{\varepsilon}(u_n)(u_n - u_\varepsilon) - m_2 (u_n^+)^{\lambda - 2} u_n^+(u_n - u_\varepsilon)  \mathrm{d}x
		\notag\\ \geq
		& \int_{\Omega} \left( \frac{1}{2}\right)^p a_0 |\nabla u_n - \nabla u_\varepsilon|^{p} \mathrm{d}x + \int_{\Omega}a(x)|\nabla u_\varepsilon|^{p-2} \nabla u_\varepsilon (\nabla u_n - \nabla u_\varepsilon)  \mathrm{d}x \notag\\
		&+  \int_{\Omega}m_1 \chi_{\varepsilon}(u_n)(u_n - u_\varepsilon) \mathrm{d}x - \int_{\Omega}m_2 (u_n^+)^{\lambda - 2} u_n^+(u_n - u_\varepsilon)  \mathrm{d}x.\label{1.17}
	\end{align}
	In view of \eqref{1.15} and the boundedness of $a(x)$, we obtain
	\begin{subequations}\label{1.18}
		\begin{align}
			\lim_{n \to \infty}\int_{\Omega}a(x)|\nabla u_\varepsilon|^{p-2} \nabla u_\varepsilon (\nabla u_n - \nabla u_\varepsilon)  \mathrm{d}x =&0 , \\
			\lim_{n \to \infty}\int_{\Omega}m_1 \chi_{\varepsilon}(u_n)(u_n - u_\varepsilon) \mathrm{d}x =&0 ,\\
			\lim_{n \to \infty}\int_{\Omega}m_2 (u_n^+)^{\lambda - 2} u_n^+(u_n - u_\varepsilon)  \mathrm{d}x=&0.
		\end{align}
	\end{subequations}
	From \eqref{1.17} and \eqref{1.18}, we infer that
	\begin{align}\label{1.19}
		\lim_{n \to \infty} \int_{\Omega}|\nabla u_n - \nabla u_\varepsilon|^{p} \mathrm{d}x=0,
	\end{align}
	which, along with \eqref{1.15}, implies that $u_n \to u_\varepsilon$ in $W^{1,p}_0(\Omega)$ as $n \to \infty$, hence the Palais-Smale condition is satisfied.
	
	We have verified that all conditions of the mountain pass lemma are satisfied. Therefore, there exists a nonzero critical point $u_\varepsilon$ such that $\mathcal{J}_\varepsilon'(u_\varepsilon) = 0$, and the corresponding critical value can be expressed as
	\begin{align}\label{1.20}
		\mathcal{J}_\varepsilon(u_\varepsilon)=c_\varepsilon: = \inf_{\gamma \in \Gamma} \max_{t \in [0,1]} \mathcal{J}_\varepsilon(\gamma(t))
	\end{align}
	with
	\begin{align}
		\Gamma := \left\{ \gamma \in C\left([0,1], W_0^{1,p}(\Omega)\right) : \gamma(0) = 0,\ \mathcal{J}_\varepsilon(\gamma(1)) < 0 \right\}.\notag
	\end{align}
	Therefore, $u_\varepsilon$ is a mountain pass solution of equation \eqref{main1} at the level $c_\varepsilon$.
	
	Now, we prove  the existence of a weak solution of equation \eqref{main}. Indeed, in view of \eqref{1.6}, for a fixed $u_0 \in W^{1,p}_0(\Omega) \setminus \{0\}$, there exists $t_0 > 0$ independent of $\varepsilon$ such that $\mathcal{J}_\varepsilon(t_0 u_0) < 0$. Setting $w_0 = t_0 u_0$, we have $t w_0 \in \Gamma$ for  $t \in [0,1]$, and there exists a constant $B (p, \lambda,m_1,m_2,a_1,w_0,\Omega)> 0$ independent of $\varepsilon$ such that
	\begin{align}
		c_\varepsilon \leq &\max_{t \in [0,1]} J_\varepsilon(tw_0)
		\notag\\=
		& \max_{t \in [0,1]} \left[  \int_\Omega \frac{a(x)t^p}{p}|\nabla w_0|^p \mathrm{d}x + m_1\int_\Omega \Phi_\varepsilon(tw_0) \mathrm{d}x -  \frac{m_2t^\lambda}{\lambda}\int_\Omega |w_0^+|^\lambda \mathrm{d}x \right]
		\notag\\\leq
		& \max_{t \in [0,1]} \left[ \frac{a_1t^p}{p}\int_\Omega |\nabla w_0|^p \mathrm{d}x + m_1\int_\Omega (|tw_0| + 1) \mathrm{d}x +\frac{m_2t^\lambda}{\lambda}\int_\Omega |w_0^+|^\lambda \mathrm{d}x  \right]
		\notag\\=
		&: B(p, \lambda,m_1,m_2,a_1,w_0,\Omega).\label{1.21}
	\end{align} 	
	From \eqref{1.10}, \eqref{1.20}, and \eqref{1.21}, we infer that
	\begin{align}
		B(p, \lambda,m_1,m_2,a_1,w_0,\Omega) \geq & c_\varepsilon
		\notag\\=
		&\mathcal{J}_\varepsilon(u_\varepsilon) - \frac{1}{\lambda} \langle \mathcal{J}_\varepsilon'(u_\varepsilon), u_\varepsilon \rangle
		\notag\\\geq
		& \left( \frac{a_0}{p} - \frac{a_0}{\lambda} \right)\int_\Omega |\nabla u_\varepsilon|^p \mathrm{d}x -C\int_\Omega |u_\varepsilon| \mathrm{d}x, \notag
	\end{align} 	
	which, along with  Young's inequality with $\varepsilon_1>0$, implies that
	\begin{align}
		\left( \frac{a_0}{p} - \frac{a_0}{\lambda} \right)\int_\Omega |\nabla u_\varepsilon|^p \mathrm{d}x \leq& C\int_\Omega |u_\varepsilon| \mathrm{d}x + C
		\leq
		C\varepsilon_1\int_\Omega |u_\varepsilon|^p \mathrm{d}x + C_{\varepsilon_1}.\label{1.23}
	\end{align}
	Choosing sufficiently small $\varepsilon_1$, we deduce from \eqref{1.23} that
	\begin{align}
		\int_\Omega |\nabla u_\varepsilon|^p \mathrm{d}x &\leq C,\notag
	\end{align}
	where  $C$ is a positive  constant depending only on $p$, $\lambda$, $m_1$, $m_2$,  $a_0$, $a_1$, $w_0$, and $\Omega$.
	Since the family $\{u_\varepsilon\}$ is uniformly bounded in $W^{1,p}_0(\Omega)$ with respect to $\varepsilon$, there exist a subsequence, still denoted by $\{u_{\varepsilon}\}$, and a function $u \in W^{1,p}_0(\Omega)$ such that
	\begin{align}
			u_{\varepsilon} \rightharpoonup u \text{ in } W^{1,p}_0(\Omega)\ \ \text{and}\ \
			u_{\varepsilon} \to u \text{ in } L^{p^*}(\Omega).\notag
		\end{align}
		Then, proceeding in the same way as for \eqref{1.19}, we obtain
		\begin{align}
			\lim_{m \to \infty} \int_{\Omega}|\nabla u_{\varepsilon} - \nabla u|^{p} \mathrm{d}x=0.\notag
		\end{align}
		Thus, $u_{\varepsilon} \to u $ in $ W^{1,p}_0(\Omega)$.
		Now, Taking $\varphi \in C_{0}^{\infty}(\Omega)$ as a test function for \eqref{main1}, we obtain
		\begin{align}\label{1.29}
			\int_{\Omega} a(x)|\nabla u_{u_{\varepsilon}}|^{p-2}\nabla u_{\varepsilon} \nabla\varphi  + m_1 \chi_{\varepsilon}(u_{\varepsilon})\varphi- m_2 (u_{\varepsilon}^{+})^{\lambda-1}\varphi\mathrm{d}x=0.
		\end{align}
		By virtue of $u_{\varepsilon} \to u$ in $W^{1,p}_0(\Omega)$  and  the subcritical growth condition \eqref{p-condition}, taking the limit $\varepsilon \to 0$ in \eqref{1.29}, we obtain
		\begin{align}
			\int_{\Omega} a(x)|\nabla u|^{p-2}\nabla u \nabla\varphi + m_1 \chi_{\{u>0\}}\varphi- m_2 (u^{+})^{\lambda-1}\varphi\mathrm{d}x=0,\notag
		\end{align}
		which, along with density of $C_{0}^{\infty}(\Omega)$ in $W^{1,p}_0(\Omega) $, implies that  $u$ satisfies the first equation of \eqref{main} in the weak sense. 
		Furthermore, one can show that such $u$ is non-negative,  and hence $u$ is  a weak solution of equation \eqref{main}. Indeed, using $u-u^+\in W^{1,p}_0(\Omega)$ as  a test  function for equation   \eqref{main}, we obtain
		\begin{align*}
			\int_{\Omega}a|\nabla u|^{p-2}\nabla u (\nabla u -\nabla u^+)\mathrm{d}x + \int_{\Omega} (m_1 \chi_{\{u>0\}} - m_2 u^{\lambda - 1} \chi_{\{u > 0\}}  )(u-u^+) \mathrm{d}x =0.
		\end{align*}
		It follows that
		\begin{align*}
			\int_{\Omega}a|\nabla u|^{p } \mathrm{d}x=& \int_{\Omega}a|\nabla u|^{p-2}\nabla u  \nabla u^+ \mathrm{d}x -\int_{\Omega} (m_1 \chi_{\{u>0\}} - m_2 u^{\lambda - 1} \chi_{\{u > 0\}}  )(u-u^+) \mathrm{d}x \notag\\
			\leq & \frac{1}{p}\int_{\Omega}a|\nabla u|^{p }  \mathrm{d}x + \frac{p-1}{p}\int_{\Omega}a|\nabla u^+|^{p }  \mathrm{d}x
			-\int_{\Omega} (m_1 \chi_{\{u>0\}} - m_2 u^{\lambda - 1} \chi_{\{u > 0\}}  )(u-u^+) \mathrm{d}x,
		\end{align*}
		where in the last inequality we used the Young's inequality. Therefore, we have
		\begin{align*}
			\int_{\Omega}a|\nabla u|^{p } \mathrm{d}x=& \int_{\Omega}a|\nabla u|^{p-2}\nabla u  \nabla u^+ \mathrm{d}x -\int_{\Omega} (m_1 \chi_{\{u>0\}} - m_2 u^{\lambda - 1} \chi_{\{u > 0\}}  )(u-u^+) \mathrm{d}x \notag\\
			\leq &  \int_{\Omega}a|\nabla u^+|^{p }  \mathrm{d}x
			-\frac{p}{p-1}\int_{\Omega} (m_1 \chi_{\{u>0\}} - m_2 u^{\lambda - 1} \chi_{\{u > 0\}}  )(u-u^+) \mathrm{d}x \notag\\
			= &  \int_{\Omega \cap \{u>0\} }a|\nabla u^+|^{p }  \mathrm{d}x
			-\frac{p}{p-1}\int_{\Omega \cap \{u>0\}} (m_1 \chi_{\{u>0\}} - m_2 u^{\lambda - 1} \chi_{\{u > 0\}}  )(u-u^+) \mathrm{d}x\notag\\
			& +  \int_{\Omega \cap \{u<0\}}a|\nabla u^+|^{p }  \mathrm{d}x
			-\frac{p}{p-1}\int_{\Omega \cap \{u<0\}} (m_1 \chi_{\{u>0\}} - m_2 u^{\lambda - 1} \chi_{\{u > 0\}}  )(u-u^+) \mathrm{d}x \notag\\
			= &  \int_{\Omega \cap \{u>0\} }a|\nabla u|^{p }  \mathrm{d}x,
		\end{align*}
		which implies that
		\begin{align}
			\int_{\Omega \cap \{u<0\} }a|\nabla u|^{p }  \mathrm{d}x = 0.   \notag
		\end{align}
		Thus, either $u \equiv C$ or $u \geq 0$ a.e. in $\Omega$. Since $u \in W^{1,p}_0(\Omega)$, it follows that $u \geq 0$ a.e. in $\Omega$.
	\end{proof}
	
	The following result gives a uniform $L^{\infty}$ bound for the solutions of equation \eqref{main}.
	\begin{proposition}\label{exi-2}
		All weak solutions, still denoted by $u$, of equation \eqref{main} are uniformly bounded, having the estimate
		\begin{align*}
			\|u\|_{L^{\infty}(\Omega)}\leq C,
		\end{align*}
		where $C$ is positive constant  depending only on   $N$, $p$, $\lambda$, $m_1$, $m_2$,  $a_0$,  and $\Omega$.
	\end{proposition}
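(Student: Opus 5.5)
We plan a De Giorgi--Nash level-set iteration, closed by the recursion of Lemma~\ref{lem1}. As in the paper's statement, we regard the energy bound $\int_\Omega|\nabla u|^p\,\mathrm{d}x\le C_0$ as part of the data. This holds for the solutions constructed in Proposition~\ref{exi}, with $C_0$ depending only on the listed quantities. Without some control on the energy, a uniform bound cannot hold in the supercritical-in-sign setting.

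Fix $k>0$ and use $(u-k)^+\in W^{1,p}_0(\Omega)$ as a test function in \eqref{main}. Write $A_k:=\{u>k\}$ and $w_k:=(u-k)^+$. Since $m_1\chi_{\{u>0\}}w_k\ge0$, we obtain
\begin{align*}
a_0\int_{A_k}|\nabla w_k|^p\,\mathrm{d}x\le m_2\int_{A_k}u^{\lambda-1}w_k\,\mathrm{d}x .
\end{align*}
On $A_k$ we have $u\le w_k+k$, so $u^{\lambda-1}\le 2^{\lambda-2}(w_k^{\lambda-1}+k^{\lambda-1})$. Hence
\begin{align*}
\int_{A_k}u^{\lambda-1}w_k\,\mathrm{d}x\le C\int_{A_k}w_k^{\lambda}\,\mathrm{d}x+Ck^{\lambda-1}\int_{A_k}w_k\,\mathrm{d}x .
\end{align*}
The superlinear term is the main obstacle: it cannot simply be absorbed on the left. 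We handle it with Hölder and Sobolev, using $\lambda<p^*$:
\begin{align*}
\int_{A_k}w_k^\lambda\,\mathrm{d}x\le \|u\|_{L^{p^*}(A_k)}^{\lambda-p}\,\|w_k\|_{L^{p^*}}^{p}\,|A_k|^{1-\lambda/p^*}.
\end{align*}
Here $\|u\|_{L^{p^*}}\le C C_0^{1/p}$, and $|A_k|\le k^{-p^*}\|u\|^{p^*}_{p^*}$ is small for $k\ge k_0$ large, with $k_0$ depending only on the data. For such $k$ the term is bounded by $\frac{a_0}{2}\|\nabla w_k\|_p^p$ and is absorbed. When $N=2$, or $p\ge N$, we replace $p^*$ by a large finite exponent $q>\lambda$. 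We are left with the Caccioppoli-type estimate
\begin{align*}
\int_{A_k}|\nabla w_k|^p\,\mathrm{d}x\le Ck^{\lambda-1}\int_{A_k}w_k\,\mathrm{d}x\qquad (k\ge k_0).
\end{align*}

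Next we iterate. Set $k_n:=2M(1-2^{-n-1})$ with $M\ge k_0$ to be chosen, and $g_n:=\int_\Omega w_{k_n}^p\,\mathrm{d}x$; then $k_n\in[M,2M]$. Standard estimates give
\begin{align*}
|A_{k_{n+1}}|\le (2^{n+2}/M)^p g_n ,\qquad \int w_{k_{n+1}}\,\mathrm{d}x\le \|w_{k_{n+1}}\|_p\,|A_{k_{n+1}}|^{1-1/p}.
\end{align*}
We combine these with Sobolev and Hölder,
\begin{align*}
g_{n+1}\le \|w_{k_{n+1}}\|_{p^*}^p|A_{k_{n+1}}|^{p/N}\le C\|\nabla w_{k_{n+1}}\|_p^p\,|A_{k_{n+1}}|^{p/N},
\end{align*}
and with the Caccioppoli estimate, using Young's inequality to remove $\|w\|_p\le\|w\|_{p^*}|A|^{1/N}$ where needed. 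This yields a recursion of the form $g_{n+1}\le C(M)D^n g_n^{1+\zeta}$ with $D=2^{c(p,\lambda)}$ and $\zeta>0$ explicit, e.g. $\zeta=p/N$ up to the powers arising from the $L^1$ term. The dependence of $C(M)$ on $M$ is polynomial.

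Finally, $g_0\le \|u\|_p^p\le C C_0$, and the smallness condition of Lemma~\ref{lem1} reads $g_0\le C(M)^{-1/\zeta}D^{-1/\zeta^2}$. To see that it holds for $M$ large, we use the better bound $g_0\le\int_{A_M}u^p\,\mathrm{d}x\le \|u\|_{p^*}^{p}|A_M|^{p/N}\le CM^{-p^*p/N}$, which decays polynomially in $M$. Checking that this decay beats the growth of $C(M)^{-1/\zeta}$ is the delicate bookkeeping step. If it fails, we first run a Moser-type bootstrap to place $u$ in $L^q$ for arbitrarily large $q$, and then repeat with $p^*$ replaced by $q$. Lemma~\ref{lem1} then gives $g_n\to0$, so $u\le 2M$ a.e., with $M$ depending only on $N,p,\lambda,m_1,m_2,a_0,\Omega$ (through $C_0$).
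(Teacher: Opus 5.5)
Your De Giorgi part is sound and is essentially the paper's scheme. You truncate at level $k$, test with $(u-k)^+$, combine Sobolev and H\"older, and close with Lemma~\ref{lem1}; the initial smallness comes from $\|u\|_{L^{q}}$, i.e.\ from the energy. The paper instead iterates $g_n=\int((u-K_n)^+)^\lambda$ and keeps $\int_{S_k}(u-k)^\lambda$ on the right of its Caccioppoli inequality. Superlinearity then comes from the exponent $\lambda/p>1$, and the recursion constant decreases in $K$, so neither absorption nor bookkeeping is needed. Your variant closes as well. With $\zeta=p/N$ one gets $C(M)=CM^{\lambda-p-p^2/N}$ and $g_0\le CM^{-p^2/(N-p)}$. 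The requirement $g_0\le C(M)^{-N/p}D^{-N^2/p^2}$ for large $M$ is then equivalent to $\lambda-p<p^2/(N-p)$, i.e.\ $\lambda<p^*$, so the Moser fallback is unnecessary.

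The genuine gap is the energy bound. The proposition claims the estimate for \emph{every} weak solution, with a constant depending only on the data. The bound $\int_\Omega|\nabla u|^p\le C_0$ is not a hypothesis, so you prove only the weaker statement for solutions of bounded energy. That does cover the solutions of Proposition~\ref{exi}, though there $C_0$ also depends on $a_1$ and the fixed $w_0$.

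The paper tries to supply exactly this step in \eqref{3.14} by testing with $u$, but that argument is invalid. It uses $\int_\Omega|u|^q\le C\int_\Omega|\nabla u|^p$, whereas Sobolev only gives $C(\int_\Omega|\nabla u|^p)^{q/p}$ with $q/p>1$, so nothing can be absorbed. Testing with $u$ yields only the Nehari identity $\int_\Omega a|\nabla u|^p+m_1\int_\Omega u=m_2\int_\Omega u^\lambda$. You were right not to rely on it.

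However, your claim that no uniform bound can hold without energy control is unsupported; what is true is only that an energy-based iteration cannot produce one. The standard missing idea is a Gidas--Spruck blow-up. Suppose $M_j=\max u_j\to\infty$ is attained at $x_j$. Put $\rho_j=M_j^{-(\lambda-p)/p}$ and $v_j(y)=u_j(x_j+\rho_jy)/M_j$, so that
\[
\mathrm{div}\big(a(x_j+\rho_jy)|\nabla v_j|^{p-2}\nabla v_j\big)=M_j^{1-\lambda}m_1\chi_{\{v_j>0\}}-m_2v_j^{\lambda-1}.
\]
The $m_1$-term vanishes in the limit. A $C^{1,\alpha}$ limit $v$ then solves $-\mathrm{div}(a(\bar x)|\nabla v|^{p-2}\nabla v)=m_2v^{\lambda-1}$ in $\mathbb{R}^N$ or a half-space, with $0\le v\le 1=v(0)$. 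By the strong maximum principle $v$ is positive. This contradicts Liouville theorems: Serrin--Zou in $\mathbb{R}^N$ for $p<\lambda<p^*$, and a half-space analogue near $\partial\Omega$, which is the delicate part when $p\neq2$. Any constant obtained this way would also depend on $a_1$.
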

	\begin{proof}
		Fix $k_0 > 0$. For every $k \geq k_0$, define the function $u_k : \Omega \to \mathbb{R}$ by
		\begin{align}
			u_k(x) =
			\begin{cases}
				k  , & \text{if }  u(x)  > k, \\
				u(x), & \text{if }  u(x)  \leq k.
			\end{cases}\notag
		\end{align}
		Let $S_k = \{ x \in \Omega :  u(x)  > k \}$. Then,   for all $k \geq k_0$ we have
		\begin{align}
			u_k = u  \text{  in  } S_k^c  \text{  and  }  u_k = k   \text{  in  } S_k.\notag
		\end{align}
		Using $u-u_k\in W^{1,p}_0(\Omega)$ as a test function for equation   \eqref{main}, we obtain
		\begin{align*}
			- \int_{\Omega}a|\nabla u|^{p-2}\nabla u (\nabla u -\nabla u_k)\mathrm{d}x= \int_{\Omega} (m_1 \chi_{\{u>0\}} - m_2 u^{\lambda - 1} \chi_{\{u > 0\}}  )(u-u_k) \mathrm{d}x.
		\end{align*}
		It follows that
		\begin{align*}
			\int_{\Omega}a|\nabla u|^{p } \mathrm{d}x=& \int_{\Omega}a|\nabla u|^{p-2}\nabla u  \nabla u_k \mathrm{d}x -\int_{\Omega} (m_1 \chi_{\{u>0\}} - m_2 u^{\lambda - 1} \chi_{\{u > 0\}}  )(u-u_k) \mathrm{d}x \notag\\
			\leq & \frac{1}{p}\int_{\Omega}a|\nabla u|^{p }  \mathrm{d}x + \frac{p-1}{p}\int_{\Omega}a|\nabla u_k|^{p }  \mathrm{d}x
			-\int_{\Omega} (m_1 \chi_{\{u>0\}} - m_2 u^{\lambda - 1} \chi_{\{u > 0\}}  )(u-u_k) \mathrm{d}x,
		\end{align*}
		where in the last inequality we used the Young's inequality. Therefore, we have
		\begin{align*}
			\int_{\Omega}a|\nabla u|^{p } \mathrm{d}x=& \int_{\Omega}a|\nabla u|^{p-2}\nabla u  \nabla u_k \mathrm{d}x -\int_{\Omega} (m_1 \chi_{\{u>0\}} - m_2 u^{\lambda - 1} \chi_{\{u > 0\}}  )(u-u_k) \mathrm{d}x \notag\\
			\leq &  \int_{\Omega}a|\nabla u_k|^{p }  \mathrm{d}x
			-\frac{p}{p-1}\int_{\Omega} (m_1 \chi_{\{u>0\}} - m_2 u^{\lambda - 1} \chi_{\{u > 0\}}  )(u-u_k) \mathrm{d}x,
		\end{align*}
		which, along with the non-negativity of solutions, implies that
		\begin{align}
			\int_{S_k}a_0|\nabla u|^{p } \mathrm{d}x \leq &  \int_{S_k}a |\nabla u|^{p } \mathrm{d}x\notag\\
			=& \int_{\Omega}a|\nabla u|^{p } \mathrm{d}x- \int_{\Omega}a|\nabla u_k|^{p }  \mathrm{d}x
			\notag\\
			\leq &
			-\frac{p}{p-1}\int_{\Omega} (m_1 \chi_{\{u>0\}} - m_2 u^{\lambda - 1} \chi_{\{u > 0\}}  )(u-u_k) \mathrm{d}x\notag\\
			= &
			-\frac{p}{p-1}\int_{S_k} (m_1 \chi_{\{u>0\}} - m_2 u^{\lambda - 1} \chi_{\{u > 0\}}  )(u-u_k) \mathrm{d}x\notag\\
			=&-\frac{p}{p-1}\int_{S_k} (m_1   - m_2 u^{\lambda - 1}   )(u-k) \mathrm{d}x\notag\\
			\leq &\frac{p}{p-1}\int_{S_k}m_2 u^{\lambda - 1}    (u-k) \mathrm{d}x\notag\\
			\leq &\frac{p}{p-1}\int_{S_k}m_2 u^{\lambda  }    \mathrm{d}x\notag\\
			\leq
			&  \frac{p m_2}{p-1} 2^{\lambda-1}\int_{S_k}  \left( u  - k  \right)^\lambda  +k^\lambda \mathrm{d}x
			\notag\\\leq
			& \frac{p m_2}{p-1} 2^{\lambda-1} \int_{S_k}  \left| u  - k  \right|^\lambda   \mathrm{d}x + \frac{p m_2}{p-1} 2^{\lambda-1} k^\lambda |S_k|.   \notag
		\end{align}
		Thus, it holds that
		\begin{align}\label{3.3}
			\int_{S_k} |\nabla u|^p  \mathrm{d}x \leq & C\int_{S_k}  \left( u - k  \right)^\lambda   \mathrm{d}x + Ck^\lambda |S_k|,
		\end{align}
		where   $C$ is a positive  constant  depending only on $p$, $\lambda$, $m_2$,  and $a_0$.
		
		Define $K_n := \frac{K}{2} \left(1 -  \frac{1}{2^{n+1}}\right)$ for all $n \in \mathbb{N}$, where $K_n, K \geq k_0$. Set $g_n := \int_{S_{K_n}} (( u  - K_n)^+)^\lambda \mathrm{d}x$ for all $n \in \mathbb{N}$. We claim that
		\begin{align}\label{3.4}
			g_{n+1} \leq C D^n g_n^{1+\zeta}, {\forall n \in \mathbb{N},}
		\end{align}
		where $C > 0$, $\zeta > 0$, and $D > 1$ are constants independent of $n$.

		Let \begin{align*}
			q=
			\begin{cases}
				p^*,&N \geq 3, \\
				p+\lambda,& N = 2.
			\end{cases}
		\end{align*}
		By H\"older's inequality and the embedding inequality, it follows that
		\begin{align}
			g_{n+1}
			=
			&\int_{S_{K_{n+1}}} \left((u- K_{n+1})^+\right)^\lambda  \mathrm{d}x
			\notag \\ \leq
			& \left( \int_{S_{K_{n+1}}}  \left((u - K_{n+1})^+\right)^{q}  \mathrm{d}x \right)^{\frac{\lambda}{q}} |S_{K_{n+1}}|^{1-\frac{\lambda}{q}}
			\notag \\\leq
			& \left( \int_{\Omega}  \left((u - K_{n+1})^+\right)^{q}  \mathrm{d}x \right)^{\frac{\lambda}{q}} |S_{K_{n+1}}|^{1-\frac{\lambda}{q}}
			\notag \\ \leq
			& C\left( \int_{\Omega} \left| \nabla (u - K_{n+1})^+\right| ^p  \mathrm{d}x \right)^{\frac{\lambda}{p}} |S_{K_{n+1}}|^{1-\frac{\lambda}{q}}
			\notag \\ \leq
			& C \left( \int_{S_{K_{n+1}}} |\nabla u|^p  \mathrm{d}x \right)^{\frac{\lambda}{p}} |S_{K_{n+1}}|^{1-\frac{\lambda}{q}},\label{3.5}
		\end{align}
		where $C$ is a positive  constant  depending only on $N$, $p$,  $\lambda$, and $\Omega$.
		
		Noting that $ K_{n+1} - K_n = \frac{K}{2^{n+3}} $, we obtain
		\begin{align}
			\left( \frac{K}{2^{n+3}} \right)^\lambda |S_{K_{n+1}}|
			=
			& (K_{n+1} - K_n)^\lambda |S_{K_{n+1}}|
			\notag\\ =
			& \int_{S_{K_{n+1}}} |K_{n+1} - K_n|^\lambda    \mathrm{d}x
			\notag\\ \leq
			&\int_{S_{K_{n+1}}} \left((u - K_n)^+\right)^\lambda    \mathrm{d}x
			\notag\\ \leq
			& \int_{S_{K_{n}}} \left((u - K_n)^+\right)^\lambda    \mathrm{d}x
			\notag\\ =
			& g_n,\notag
		\end{align}
		which implies that
		\begin{align}\label{3.7}
			|S_{K_{n+1}}| \leq \left( \frac{2^{n+3}}{K} \right)^\lambda g_n.
		\end{align}
		
		Using \eqref{3.7}, we deduce that
		\begin{align}
			\int_{S_{K_{n+1}}} \left| u - K_{n+1} \right|^\lambda \mathrm{d}x
			\leq
			& 2^\lambda \int_{S_{K_{n+1}}} \left| u- K_n \right|^\lambda \mathrm{d}x + 2^\lambda \int_{S_{K_{n+1}}} |K_n - K_{n+1}|^\lambda \mathrm{d}x
			\notag\\ \leq
			& 2^\lambda \int_{S_{K_n}} \left|u - K_n \right|^\lambda \mathrm{d}x + 2^\lambda |K_n - K_{n+1}|^\lambda |S_{K_{n+1}}|
			\notag\\ \leq
			& 2^\lambda g_n + 2^\lambda \left(\frac{2^{n+3}}{K}\right)^\lambda \left(\frac{K}{2^{n+3}}\right)^\lambda g_n
			\notag\\=
			& 2^{\lambda+1}g_n. \label{3.8}
		\end{align}
		
		Then we have
		\begin{align}\label{3.9}
			K_{n+1}^\lambda |S_{K_{n+1}}| \leq \left(\frac{K}{2} \left(1 - \frac{1}{2^{n+2}}\right)\right)^\lambda \left(\frac{2^{n+3}}{K}\right)^\lambda g_n = \left(\frac{2^{n+3}}{2} \left(1 - \frac{1}{2^{n+2}}\right)\right)^\lambda g_n \leq 2^{2n+4} g_n.
		\end{align}
		
		Combining  \eqref{3.3}, \eqref{3.5}, \eqref{3.7}, \eqref{3.8}, and \eqref{3.9}, we obtain
		\begin{align}
			g_{n+1}
			\leq
			& C \left( \int_{S_{K_{n+1}}} \left|u- K_{n+1} \right|^\lambda dx + K_{n+1}^\lambda |S_{K_{n+1}}| \right)^{\frac{\lambda}{p}} |S_{K_{n+1}}|^{1-\frac{\lambda}{q}}
			\notag \\ \leq
			& C \left( 2^{\lambda+1}g_n + 2^{\lambda (n+2)} g_n \right)^{\frac{\lambda}{p}} \left( \left(\frac{2^{n+3}}{K}\right)^\lambda g_n\right)^{1-\frac{\lambda}{q}}
			\notag \\ \leq
			& C \left(  2^{\lambda+1} + 2^{\lambda (n+2)} \right)^{\frac{\lambda}{p}} g_n^{\frac{\lambda}{p}} \left( \frac{2^{n+3}}{K}\right)^{\lambda-\frac{\lambda^2}{q}} g_n^{1-\frac{\lambda}{q}}
			\notag \\ \leq
			& C \left( 2^{\lambda (n+2)}\right)^{\frac{\lambda}{p}} \left(2^{n+3}\right)^{\lambda-\frac{\lambda^2}{q}} g_n^{1+{\frac{\lambda}{p}}-\frac{\lambda}{q}}
			\notag \\ \leq
			& C2^{\frac{\lambda^2 n}{p}+\frac{2\lambda^2 }{p}} 2^{n\lambda-\frac{n\lambda^2}{q}} g_n^{1+{\frac{\lambda}{p}}-\frac{\lambda}{q}}
			\notag \\ \leq
			& C 2^n 2^{n\lambda\left(1-\frac{\lambda}{q}\right)}g_n^{1+{\frac{\lambda}{p}}-\frac{\lambda}{q}}
			\notag \\  =
			&C\left(2^{ 1+\lambda }\right)^n g_n^{1+\zeta},\notag
		\end{align}
		where $\zeta := \frac{\lambda}{p} - \frac{\lambda}{q} > 0$.
		Therefore, inequality \eqref{3.4} holds with $D := 2^{\lambda+1}$ and some constant $C := C(N, p, \lambda,  m_2,  a_0, K, \Omega)$. In particular, we have
		\begin{align}
			\left(\frac{K}{4}\right)^{q} |S_{K_0}| = (K_0)^{q} |S_{K_0}| \leq \int_{S_{K_0}} |u|^{q}  \mathrm{d}x  \leq \int_{\Omega} |u|^{q}  \mathrm{d}x =  \|u\|_{L^{q}(\Omega)}^{q}.\label{3.10}
		\end{align}
		
		From \eqref{3.10} and the embedding theorem, we infer that
		\begin{align}
			g_0 = \int_{S_{K_0}} \left((u- K_0)^+\right)^\lambda \mathrm{d}x \leq \int_{S_{K_0}} |u|^\lambda \mathrm{d}x \leq \|u\|_{L^{q}(\Omega)}^\lambda |S_{K_0}|^{1-\frac{\lambda}{q}} \leq \left(\frac{4}{K}\right)^{q-\lambda} \|u\|_{L^{q}(\Omega)}^{q}.\notag
		\end{align}
		
		Therefore, by choosing a sufficiently large $K$ depending only on $\|u\|_{L^{q}(\Omega)}$, we obtain
		\begin{align}
			g_0
			\leq
			\left(\frac{4}{K}\right)^{q-\lambda} \|u\|_{L^{q}(\Omega)}^{q}
			\leq
			C^{- \frac{1}{\zeta}} D^{- \frac{1}{\zeta^2}}.	\label{3.12}
		\end{align}
		
		In view of \eqref{3.4} and \eqref{3.12}, we deduce by  Lemma~\ref{lem1}  that $g_n \to 0$ as $n \to \infty$, and therefore
		\begin{align}
			\int_{S_{K/2}} \left( \left( u - \frac{K}{2} \right)^+ \right)^p   \mathrm{d}x= \lim_{n \to +\infty} \int_{S_{K_n}} \left( (u - K_n)^+ \right)^p  \mathrm{d}x=0.\notag
		\end{align}
		Thus,
		\begin{align}\label{3.32}
			0\leq u \leq \frac{K}{2} \ \     \text{a.e.\  in}\ \  \Omega,
		\end{align}
		where $K$ is a positive constant depending on $\| u \|_{L^{q}(\Omega)}$.
		
		We now estimate   $\| u \|_{L^{q}(\Omega)}$. Indeed, by the embedding theorem, we obtain
		\begin{align}\label{3.13}
			\| u \|_{L^{q}(\Omega)}\leq C \| \nabla u \|_{L^{p}(\Omega)},
		\end{align}
		where $C$ is a positive  constant  depending only on $N$, $p$, $q$, and $\Omega$, and hence, only on $N$, $p$, $\lambda$, and $\Omega$.
		Furthermore, taking  $u$ as test function for equation \eqref{main}, then applying Young's inequality with $\varepsilon_1,\varepsilon_2 >0$, the embedding inequality, and the Poincar\'e's inequality, we obtain
		\begin{align}
			\int_{\Omega} a_0  | \nabla u |^p \mathrm{d}x
			\leq
			& \int_{\Omega} a(x)  | \nabla u |^p \mathrm{d}x
			\notag\\ =
			&-\int_{\Omega} m_1 \chi_{\{u>0\}}\ u \mathrm{d}x+ \int_{\Omega} m_2 u^{\lambda - 1} \chi_{\{u > 0\}} u \mathrm{d}x
			\notag\\ \leq
			& \int_{\Omega} m_1 | u| \mathrm{d}x+ \int_{\Omega} m_2 |u|^{\lambda}  \mathrm{d}x
			\notag\\ \leq
			&  \varepsilon_1\int_{\Omega} | u|^p \mathrm{d}x+ \varepsilon_2 \int_{\Omega} |u|^{q}  \mathrm{d}x +C_{\varepsilon_1, \varepsilon_2}
			\notag\\ \leq
			&  C \varepsilon_1\int_{\Omega} | \nabla u|^p \mathrm{d}x+ C\varepsilon_2 \int_{\Omega} |\nabla u|^{p}  \mathrm{d}x +C_{\varepsilon_1, \varepsilon_2},\label{3.14}
		\end{align}
		where   $C$ is  a positive constant depending only on $p$, $N$, and $\Omega$, while $C_{\varepsilon_1, \varepsilon_2}$ is  a positive constant depending  only on $\varepsilon_1$, $\varepsilon_2$, $N$, $p$, $\lambda$, $m_1$, $m_2$,    and $\Omega$.
		Let  $\varepsilon_1$ and $\varepsilon_2$ be  sufficiently small. It follows from \eqref{3.14} that
		\begin{align}\label{3.15}
			\int_{\Omega}  | \nabla u |^p \mathrm{d}x &\leq C,
		\end{align}
		where $C$ is a positive  constant  depending only on $N$, $p$, $\lambda$, $m_1$, $m_2$, $a_0$, and $\Omega$.
		
		From \eqref{3.32}, \eqref{3.13}, and \eqref{3.15}, we conclude  that all solutions have the same  bound, which {depends  only on} $N$, $p$, $\lambda$, $m_1$, $m_2$,  $a_0$,  and $\Omega$.
	\end{proof}

	\section{Uniformly local H\"older continuity of solutions and their gradients}\label{Sec.4}
	
	In the section, we establish uniformly local H\"older continuity of  solutions and their gradients for the obstacle problem \eqref{main}.
	

	First, we have the following result regarding the uniformly local H\"older continuity of  solutions.
	\begin{proposition}\label{C^0} All solutions, denoted by  $u$, of equation \eqref{main} belong to $C_{\mathrm{loc}}^{0, \alpha}(\Omega)$ with the same constant $\alpha \in (0, 1)$. Moreover,  for any $\Omega' \subset \subset \Omega$, there exists a positive constant $C$ depending on $N$, $p$, $\lambda$, $m_1$, $m_2$, $a_0$, $\mu$, $\Omega'$, and $\Omega$ such that
		\begin{align}
			\|u\|_{C^{0,\alpha}(\Omega')} \leq C.\notag
		\end{align}
	\end{proposition}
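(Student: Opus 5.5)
The approach is a Campanato/Morrey-type comparison argument: compare $u$ with $p$-harmonic replacements on small balls, and use Lemma~\ref{lem4} to turn the resulting decay inequality into a Morrey estimate for $\nabla u$. Morrey's embedding then gives $C^{0,\alpha}$.

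\emph{Step 1: frozen-coefficient comparison.} Fix $\Omega'\subset\subset\Omega$, $x_0\in\Omega'$, and $0<R\le R_*:=\min\{1,\mathrm{dist}(\Omega',\partial\Omega)/2\}$. Let $v$ solve $\mathrm{div}(|\nabla v|^{p-2}\nabla v)=0$ in $B_R(x_0)$ with $v-u\in W^{1,p}_0(B_R)$. By the maximum principle and Proposition~\ref{exi-2}, $\|v\|_{L^\infty(B_R)}\le\|u\|_{L^\infty(\Omega)}\le C$, so the constant in Lemma~\ref{lem2} is uniform over all solutions.

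\emph{Step 2: bound on $\int_{B_R}(|\nabla u|^p-|\nabla v|^p)$.} Test \eqref{main} with $u-v\in W^{1,p}_0(B_R)$. The right-hand side $f:=m_1\chi_{\{u>0\}}-m_2u^{\lambda-1}\chi_{\{u>0\}}$ is bounded by a constant depending only on the data, again by Proposition~\ref{exi-2}. Hence $\int_{B_R} f(u-v)\le CR^N$.

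The coefficient $a$ has to be frozen. Write $a(x)=a(x_0)+(a(x)-a(x_0))$, use $|a(x)-a(x_0)|\le a_1R$, and use $v$ as a minimizer of the $p$-Dirichlet energy, so that $\int|\nabla v|^p\le\int|\nabla u|^p$. This leads to
\[
\int_{B_R}(|\nabla u|^p-|\nabla v|^p)\,\mathrm{d}x\le C\Big(R\int_{B_R}|\nabla u|^p\,\mathrm{d}x+R^N\Big).
\]
To get this, I would use the convexity inequality $|\nabla u|^p-|\nabla v|^p\le p|\nabla u|^{p-2}\nabla u\cdot\nabla(u-v)$. The error term $\int(a-a(x_0))|\nabla u|^{p-2}\nabla u\cdot\nabla(u-v)$ is then bounded by $CR\int_{B_R}|\nabla u|^p$ via Hölder and the energy comparison.

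\emph{Step 3: decay estimate.} Combine Step 2 with Lemma~\ref{lem2} (taking $\mu$ close to $N$) and with the standard decay for $p$-harmonic functions, $\int_{B_r}|\nabla v|^p\le C(r/R)^N\int_{B_R}|\nabla v|^p$. Writing $\tau(r)=\int_{B_r}|\nabla u|^p$, this gives
\[
\tau(r)\le A\Big(\frac rR\Big)^N\tau(R)+C\,R\,\tau(R)+C\big(R^N+R^{\mu/2}(R\tau(R)+R^N)^{1/2}\big).
\]
Since $\tau(R)\le C$ by \eqref{3.15}, the error terms are at most $CR^{\beta}$ for some $\beta<N$ close to $N-p+p\alpha'$. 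The delicate point is $R\,\tau(R)$: it only gives $R^1$ initially.

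I expect this step to be the main obstacle. I would handle it by a bootstrap. First absorb $CR\tau(R)$ via the usual iteration lemma (for $R$ small, $CR\le\epsilon$) to obtain $\tau(r)\le C(r/R)^{N-\epsilon}\tau(R)+CR^{\beta}$. Then apply Lemma~\ref{lem4} with $\alpha=N-\epsilon$, $\beta=N-p+p\alpha$ for a fixed $\alpha\in(0,1)$. The monotonicity hypothesis $\tau(r)\le d\,\tau(\theta^kR)$ holds trivially with $d=1$, since $\tau$ is nondecreasing. Lemma~\ref{lem4} then yields $\int_{B_r(x_0)}|\nabla u|^p\le Dr^{N-p+p\alpha}$, uniformly in $x_0\in\Omega'$ and over all solutions.

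\emph{Step 4: Hölder continuity.} Morrey's Dirichlet growth theorem converts this into $[u]_{C^{0,\alpha}(\Omega')}\le C$. Together with Proposition~\ref{exi-2}, this bounds $\|u\|_{C^{0,\alpha}(\Omega')}$ with $C$ depending only on $N,p,\lambda,m_1,m_2,a_0,\mu,\Omega',\Omega$, and with $\alpha$ independent of the particular solution.
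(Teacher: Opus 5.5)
Your proposal is correct and follows essentially the same route as the paper: $p$-harmonic replacement $v$ on $B_R$, testing \eqref{main} with $u-v$ to bound the energy gap, Morrey-type decay of $\int_{B_r}|\nabla u|^p$, then Morrey's Dirichlet growth theorem; your explicit freezing of $a$ at $x_0$, which produces the extra term $CR\,\tau(R)$ absorbed by iteration, is more careful than the paper, whose chain of inequalities pulls $a$ out of the integral as if it were constant. One small repair is needed: the term $R^{\mu/2}(R\,\tau(R))^{1/2}$ from Lemma~\ref{lem2} must also be absorbed, e.g.\ via $R^{\mu/2}(R\,\tau(R))^{1/2}\le\frac12\left(R^{\mu}+R\,\tau(R)\right)$, because bounding $\tau(R)\le C$ there yields only $R^{(\mu+1)/2}$, which lies below $N-p$ whenever $N\ge 2p+1$.
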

	
	\begin{proof}
		For any $x_0\in \Omega$, let  $B_R = B_R(x_0)\subset\subset \Omega$ with some $ R>0$.  Let $v$ be a $p$-harmonic function satisfying
		\begin{align}\label{4.511}
			\begin{cases}
				\mathrm{div}\left(|\nabla v|^{p-2} \nabla v\right) = 0  \text{ in } B_{R}, \\
				v = u  \text{ on } \partial B_{R}.
			\end{cases}
		\end{align}
		By the maximum principle, we obtain
		\begin{align}
			\|v\|_{L^\infty(B_R)} \leq \|v\|_{L^\infty(\partial B_R)} = \|u\|_{L^\infty(\partial B_R)} \leq \|u\|_{L^\infty(\Omega)}. \label{Equ4.2}
		\end{align}
		Using $u-v\in W^{1,p}_0(B_R)$ as  a test  function for equation   \eqref{main}, we obtain
		\begin{align*}
			\int_{B_R}a|\nabla u|^{p-2}\nabla u (\nabla u -\nabla v)\mathrm{d}x + \int_{B_R} (m_1 \chi_{\{u>0\}} - m_2 u^{\lambda - 1} \chi_{\{u > 0\}}  )(u-v) \mathrm{d}x =0.
		\end{align*}
		It follows that
		\begin{align*}
			\int_{B_R}a|\nabla u|^{p } \mathrm{d}x=& \int_{B_R}a|\nabla u|^{p-2}\nabla u  \nabla v \mathrm{d}x -\int_{B_R} (m_1 \chi_{\{u>0\}} - m_2 u^{\lambda - 1} \chi_{\{u > 0\}}  )(u-v) \mathrm{d}x \notag\\
			\leq & \frac{1}{p}\int_{B_R}a|\nabla u|^{p }  \mathrm{d}x + \frac{p-1}{p}\int_{B_R}a|\nabla v|^{p }  \mathrm{d}x
			-\int_{B_R} (m_1 \chi_{\{u>0\}} - m_2 u^{\lambda - 1} \chi_{\{u > 0\}}  )(u-v) \mathrm{d}x,
		\end{align*}
		where in the last inequality we used the Young's inequality. Therefore, we have
		\begin{align}
			\int_{B_R}a|\nabla u|^{p } \mathrm{d}x -  \int_{B_R}a|\nabla v|^{p }  \mathrm{d}x 
			\leq &
			-\frac{p}{p-1}\int_{B_R} (m_1 \chi_{\{u>0\}} - m_2 u^{\lambda - 1} \chi_{\{u > 0\}}  )(u-v) \mathrm{d}x \notag\\
			\leq & \frac{p}{p-1} \left( \int_{B_R } m_1|v - u| \mathrm{d}x + \int_{B_R } m_2\|u\|_{L^\infty(\Omega)}^{\lambda-1}|v-u| \mathrm{d}x\right)  \notag\\
			\leq & C \int_{B_R } |v - u|\mathrm{d}x  \label{4.5-1}
			\\ \leq
			& C R^N \label{4.5-2},
		\end{align}
		where, by virtue of \eqref{Equ4.2} and Proposition~\ref{exi-2},   $C$ is a positive constant depending only on  $N$, $p$, $\lambda$, $m_1$, $m_2$, and $\Omega$.

		By \eqref{4.5-2} and \cite[Lemma A.14]{Zheng:2022}, for any $\kappa \in (0, N)$, there holds
		\begin{align*}
			\int_{B_R}  |\nabla u|^p \mathrm{d}x \leq&
			\frac{a}{a_0}\int_{B_R}  |\nabla u|^p \mathrm{d}x
			\notag\\
			\leq &\frac{a}{a_0}\int_{B_R}  |\nabla v|^p \mathrm{d}x  + C R^N
			\notag\\
			\leq & \frac{a_1}{a_0}\int_{B_R}  |\nabla v|^p \mathrm{d}x  + C R^N
			\notag\\
			\leq& C R^\kappa ,
		\end{align*}
		where we let $\kappa \in(N - 1 , N)$. Applying \cite[Lemma A.13]{Zheng:2022}, we conclude that $ u \in C^{0,\alpha}_{\mathrm{loc}}(\Omega) $ for some $ \alpha \in (0,1)$.
	\end{proof}
	
	The local H\"older continuity  for the solutions' gradients is given below.
	\begin{theorem}\label{C^1}
		All solutions, denoted by  $u$, of equation \eqref{main} belong to $C_{\mathrm{loc}}^{1, \alpha}(\Omega)$ with the same constant $\alpha \in (0, 1)$.  Moreover, for any $\Omega' \subset \subset \Omega$, there exists a positive constant $C$ depending on $N$, $p$, $\lambda$, $m_1$, $m_2$, $a_0$, $\mu$, $\Omega'$, and $\Omega$ such that
		\begin{align}
			\|u\|_{C^{1,\alpha}(\Omega')} \leq C. \label{Equ 4.5}
		\end{align}
	\end{theorem}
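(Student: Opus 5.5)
We prove Theorem~\ref{C^1} by a Campanato-type perturbation argument. On each small ball $u$ is compared with a $p$-harmonic replacement, and Lemmas~\ref{lem2}, \ref{lem3} and \ref{lem4} turn the comparison into a decay estimate for the mean oscillation of $\nabla u$. Throughout, fix $\Omega'\subset\subset\Omega$, $x_0\in\Omega'$, and $R_*\le\min\{1,\mathrm{dist}(\Omega',\partial\Omega)/2\}$. By Proposition~\ref{exi-2} the right-hand side $f:=m_1\chi_{\{u>0\}}-m_2u^{\lambda-1}\chi_{\{u>0\}}$ satisfies $\|f\|_{L^\infty(\Omega)}\le C$ uniformly over all solutions.

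\emph{Step 1: freezing the coefficient.} For $0<R\le R_*$ let $v$ solve \eqref{4.511} in $B_R(x_0)$. The $p$-Laplacian is homogeneous, so $v$ also minimizes $\int_{B_R}a(x_0)|\nabla w|^p$ among $w$ with $w-u\in W^{1,p}_0(B_R)$. We test the weak form of \eqref{main} with $u-v$ and write $a(x)=a(x_0)+(a(x)-a(x_0))$. Using $|a(x)-a(x_0)|\le a_1R$ and the Young-inequality computation leading to \eqref{4.5-1}, we obtain
\begin{align*}
\int_{B_R}(|\nabla u|^p-|\nabla v|^p)\,\mathrm{d}x\le C\int_{B_R}|u-v|\,\mathrm{d}x+CR\int_{B_R}|\nabla u|^p\,\mathrm{d}x .
\end{align*}
To control the first term on the right:
\begin{itemize}
\item Proposition~\ref{C^0} gives $\mathrm{osc}_{B_R}u\le CR^{\alpha_0}$, uniformly over solutions.
\item The maximum principle gives $\mathrm{osc}_{B_R}v\le\mathrm{osc}_{\partial B_R}u$.
\item Together these yield $\|u-v\|_{L^\infty(B_R)}\le CR^{\alpha_0}$, hence $\int_{B_R}|u-v|\le CR^{N+\alpha_0}$.
\end{itemize}
For the second term we use the Morrey bound $\int_{B_R}|\nabla u|^p\le CR^{\kappa}$, $\kappa\in(N-1,N)$, from the proof of Proposition~\ref{C^0}. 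It gives $R\int_{B_R}|\nabla u|^p\le CR^{1+\kappa}=CR^{N+(1+\kappa-N)}$. Hence
\begin{align*}
\int_{B_R}(|\nabla u|^p-|\nabla v|^p)\,\mathrm{d}x\le CR^{N+\beta_0},\qquad \beta_0:=\min\{\alpha_0,\,1+\kappa-N\}>0 .
\end{align*}

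\emph{Step 2: comparison estimate.} Lemma~\ref{lem2} applies because $\|v\|_{L^\infty}\le\|u\|_{L^\infty(\Omega)}\le C$. Taking $\mu$ close to $N$ in that lemma gives
\begin{align*}
\int_{B_R}|\nabla u-\nabla v|^p\,\mathrm{d}x\le CR^{N+\beta_0}+CR^{\mu/2}R^{(N+\beta_0)/2}\le CR^{N+\beta},
\end{align*}
with $\beta:=\min\{\beta_0,(\mu-N+\beta_0)/2\}$, which is positive once $\mu>N-\beta_0$.

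\emph{Step 3: decay and conclusion.} Set $\tau(r):=\int_{B_r}|\nabla u-(\nabla u)_r|^p$. Lemma~\ref{lem3} gives
\begin{align*}
\tau(r)\le C\Big(\frac{r}{R}\Big)^{N+\delta}\tau(R)+CR^{N+\beta}\qquad\text{for all }0<r\le R\le R_* .
\end{align*}
We shrink $\beta$ if necessary so that $\beta<\delta$. Lemma~\ref{lem4}, or the standard iteration lemma, then yields $\tau(r)\le Cr^{N+\sigma}$ for some $\sigma\in(0,\beta]$. Its doubling hypothesis holds because $\tau(r)\le 2^{p}\int_{B_{r'}}|\nabla u-(\nabla u)_{r'}|^p$ for $r\le r'$, with $r'\le \theta^{-1}r$. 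Since the constants are uniform in $x_0\in\Omega'$, Campanato's characterization gives $\nabla u\in C^{0,\sigma/p}(\Omega')$. Bounding $|(\nabla u)_{R_*}|$ by the global $W^{1,p}$ bound \eqref{3.15} then yields \eqref{Equ 4.5} with $\alpha=\sigma/p$, uniformly over all solutions.

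The main obstacle is Step 1. The $p$-harmonic comparison function ignores the variable coefficient, so the error term $R\int|\nabla u|^p$ must be absorbed with a strictly positive excess exponent. This works only because the Morrey exponent $\kappa$ from Proposition~\ref{C^0} can be taken larger than $N-1$. If that margin proves delicate, we will instead bootstrap. First use the weaker decay to get $\nabla u\in L^\infty_{\mathrm{loc}}$, via the Campanato estimate for $\tau$ with a small exponent together with the boundedness of $(\nabla u)_r$. Then rerun Step 1 with $R\int|\nabla u|^p\le CR^{N+1}$, which gives a clean exponent.
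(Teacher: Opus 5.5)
Your proposal is correct, and it has the same skeleton as the paper's proof. Both use a $p$-harmonic replacement $v$ on $B_R(x_0)$, then Lemma~\ref{lem2} to pass from the energy gap to $\int_{B_R}|\nabla u-\nabla v|^p$, Lemma~\ref{lem3} for the decay of $\tau$, Lemma~\ref{lem4}, and Campanato's embedding. For Lemma~\ref{lem4} you verify the quasi-monotonicity of $\tau$ directly, where the paper cites Lieberman.

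You genuinely differ from the paper in how you bound the energy gap $\int_{B_R}(|\nabla u|^p-|\nabla v|^p)$.
\begin{itemize}
\item The paper passes from the weighted inequality \eqref{4.5-1} to the unweighted \eqref{4.5} by simply dropping $a(x)$. This is not legitimate: $|\nabla u|^p-|\nabla v|^p$ has no sign, and $v$ does not minimize the $a$-weighted energy. The paper then estimates $\int_{B_R}|u-v|$ by Sobolev--Poincar\'e and Young as in \eqref{4.6}, and absorbs $\varepsilon_1R\int_{B_R}|\nabla u-\nabla v|^p$ through Lemma~\ref{lem2}. That part needs no H\"older information on $u$.
\item You freeze the coefficient at $x_0$. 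This produces exactly the error term $CR\int_{B_R}|\nabla u|^p$ that the paper silently omits. You then bound $\int_{B_R}|u-v|$ by $\mathrm{osc}_{B_R}u$, using Proposition~\ref{C^0} and the maximum principle.
\end{itemize}
So your route is the one that actually handles the variable coefficient. The price is that you import the H\"older estimate and a Morrey bound with $\kappa>N-1$.

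Two remarks on that Morrey bound.
\begin{itemize}
\item It appears only inside the proof of Proposition~\ref{C^0}, and there it is derived with the same careless treatment of $a(x)$. You should justify it yourself, and your own freezing step does so. Set $\phi(r):=\int_{B_r}|\nabla u|^p$. Combine three facts: the decay $\int_{B_r}|\nabla v|^p\le C(r/R)^N\int_{B_R}|\nabla v|^p$ for $p$-harmonic $v$; the convexity bound $c_p\int_{B_R}|\nabla u-\nabla v|^p\le\int_{B_R}(|\nabla u|^p-|\nabla v|^p)$, valid for $p\ge 2$; and your Step~1 with $\int_{B_R}|u-v|\le CR^N$. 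Together these give $\phi(r)\le C\big[(r/R)^N+R\big]\phi(R)+CR^N$. The standard iteration lemma with a small perturbation term then gives $\phi(r)\le Cr^{\kappa}$ for every $\kappa<N$.
\item Your fallback bootstrap is unnecessary, and as described it would not work. A Campanato decay $\tau(r)\le Cr^{\gamma}$ with $\gamma\le N$ does not give $\nabla u\in L^\infty_{\mathrm{loc}}$, so the bootstrap cannot start without the margin $\kappa>N-1$ anyway.
\end{itemize}
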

	
	\begin{proof} For any $x_0\in \Omega$, let  $B_R = B_R(x_0)\subset\subset \Omega$ with some $ R \leq R_* \leq 1$.
		Let $ v $ be  a $p$-harmonic function satisfying \eqref{4.511}. For any $B_r \subset\subset B_R $,	by Lemma~\ref{lem2} and Lemma~\ref{lem3}, there exists a constant $\delta \in (0,1)$ such that
		\begin{align}
			\int_{B_r} |\nabla u - (\nabla u)_r|^p  \mathrm{d}x
			\leq
			& C \left( \frac{r}{R} \right)^{N+\delta} \int_{B_R} |\nabla u - (\nabla u)_R|^p  \mathrm{d}x + C\int_{B_R} |\nabla u - \nabla v|^p  \mathrm{d}x
			\notag \\ \leq
			& C\left( \frac{r}{R} \right)^{N+\delta} \int_{B_R} |\nabla u - (\nabla u)_R|^p \mathrm{d}x  + C\int_{B_R} \left(|\nabla u|^p - |\nabla v|^p\right) \mathrm{d}x
			\notag\\
			&+ C R^{\frac{\mu}{2}} \left( \int_{B_R} \left(|\nabla u|^p - |\nabla v|^p\right) \mathrm{d}x \right)^{\frac{1}{2}}, \label{4.2}
		\end{align}
		where $\mu \in (0, N)$ is an arbitrary constant, and  $C$ is  a  positive  constant depending only on $N$, $p$, $\mu$,  and $\|v\|_{L^\infty(B_R)}$.

		By \eqref{4.5-1}, we obtain
		\begin{align}\label{4.5}
			\int_{B_R} \left(  |\nabla u|^p -  |\nabla v|^p \right)  \mathrm{d}x \leq C \int_{B_R} |v- u|\mathrm{d}x,
		\end{align}
		where $C$ is a positive  constant  depending only on  $N$, $p$, $\lambda$, $m_1$, $m_2$, $a_0$, and $\Omega$.
		
		By Young's inequality with $\varepsilon_1>0$, we obtain
		\begin{align}
			\int_{B_R} |v - u| \mathrm{d}x
			\leq
			&|B_R|^{\frac{1}{N}}  \|v - u\|_{L^{1^*}(B_R)}
			\notag\\ \leq
			& C R \int_{B_R} |\nabla(v - u)| \mathrm{d}x
			\notag\\ \leq
			& C \varepsilon_1 R \int_{B_R} |\nabla v - \nabla u|^p \mathrm{d}x + C_{\varepsilon_1} R^{N+1},\label{4.6}
		\end{align}
		where $C$ is a positive  constant  depending only on $N$, $p$, and $\Omega$, and where $\varepsilon_1 > 0$ will be chosen later.
		
		We infer from Lemma~\ref{lem2}, \eqref{4.5}, and \eqref{4.6} that
		\begin{align}
			\int_{B_R} |\nabla u - \nabla v|^p \mathrm{d}x
			\leq
			& C \int_{B_R} \left(|\nabla u|^p - |\nabla v|^p\right) \mathrm{d}x + C R^{\frac{\mu}{2}} \left( \int_{B_R} \left(|\nabla u|^p - |\nabla v|^p\right) \mathrm{d}x \right)^{\frac{1}{2}}
			\notag\\ \leq
			&  C \varepsilon_1 R \int_{B_R} |\nabla u - \nabla v|^p \mathrm{d}x + C_{\varepsilon_1} R^{N+1}  + C \varepsilon_1^{\frac{1}{2}} R^{\frac{\mu+1}{2}} \left( \int_{B_R} |\nabla u - \nabla v|^p \mathrm{d}x \right)^{\frac{1}{2}} + C_{\varepsilon_1} R^{\frac{\mu+1+N}{2}}
			\notag\\ \leq
			&  C \varepsilon_1 R \int_{B_R} |\nabla u - \nabla v|^p\mathrm{d}x + C_{\varepsilon_1} R^{N+1} + \varepsilon_1 \int_{B_R} |\nabla u - \nabla v|^p \mathrm{d}x + C_{\varepsilon_1} R^{\frac{\mu+1+N}{2}} +  C R^{\mu+1}. \notag
		\end{align}
		Considering a sufficiently small $\varepsilon_1$, we obtain
		\begin{align}\label{4.7}
			\int_{B_R} |\nabla u - \nabla v|^p \mathrm{d}x \leq C  R^m,
		\end{align}
		where $m := \min \left\{ N + 1, \mu + 1, \frac{\mu + 1 + N}{2} \right\}$, and $C$ is a positive  constant  depending only on  $N$,  $p$, $\lambda$, $m_1$, $m_2$, $a_0$, $\mu$,  and  $\Omega$.

		From \eqref{4.5}, \eqref{4.6},  \eqref{4.7},  and the choice of $\varepsilon_1$, we obtain
		\begin{align}
			\int_{B_R} \left(|\nabla u|^p - |\nabla v|^p\right) \mathrm{d}x
			\leq
			& C \int_{B_R} |\nabla u - \nabla v|^p \mathrm{d}x + CR^{N+1}
			\notag\\ \leq
			& C R^{m+1} + C R^{N+1},\label{4.8}
		\end{align}
		where $C$ is a positive  constant  depending only on  $N$,  $p$, $\lambda$, $m_1$, $m_2$, $a_0$, $\mu$, and $\Omega$.

		Substituting \eqref{4.8} into \eqref{4.2}, we obtain that
		\begin{align}
			\int_{B_r} |\nabla u - (\nabla u)_r|^p \mathrm{d}x &\leq C \left( \frac{r}{R} \right)^{N+\delta} \int_{B_R} |\nabla u - (\nabla u)_R|^p \mathrm{d}x + C R^{N+1} + C R^{m+1}
			+ C R^{\frac{\mu+N+1}{2}} + C R^{\frac{\mu+m+1}{2}}, \forall r \in (0, R].\notag
		\end{align}
		Since $\mu \in (0, N)$ is arbitrary, we may choose $\mu \in (N-1, N)$, which then implies that $ \min \left\{ m + 1, \frac{\mu + N + 1}{2}, \frac{\mu + m + 1}{2} \right\} > N$. Consequently, there exists a constant $\alpha_1 > 0$ such that
		\begin{align}
			\int_{B_r} |\nabla u - (\nabla u)_r|^p \mathrm{d}x \leq C \left( \frac{r}{R} \right)^{N+\delta} \int_{B_R} |\nabla u - (\nabla u)_R|^p \mathrm{d}x + C R^{N+\alpha_1},\notag
		\end{align}
		where $C$ is a positive  constant  depending only on  $N$,  $p$, $\lambda$, $m_1$, $m_2$, $a_0$, $\mu$, and $\Omega$.
		
		By the inequality (5.1) in \cite{Lieberman:1991} it holds that
		\begin{align}
			\int_{B_r} |\nabla u - (\nabla u)_r|^p \mathrm{d}x
			\leq
			& C \int_{B_r} |\nabla u - (\nabla u)_R|^p \mathrm{d}x
			\notag\\ \leq
			&C \int_{B_R} |\nabla u - (\nabla u)_R|^p \mathrm{d}x,\notag
		\end{align}
		where $C$ is a positive  constant  depending only on $N$ and $p$.
		
		Applying Lemma~\ref{lem4}  with $\tau(r) := \int_{B_r} |\nabla u - (\nabla u)_r|^p\mathrm{d}x$, we find that there exists a constant $\alpha_2 \in (0, 1)$ such that
		\begin{align}
			\int_{B_r} |\nabla u - (\nabla u)_r|^p \mathrm{d}x \leq C r^{N+\alpha_2},\notag
		\end{align}
		where $C$ is a positive  constant  depending only on $N$,  $p$, $\lambda$, $m_1$, $m_2$, $a_0$, $\mu$, and $\Omega$.
		It follows that
		\begin{align}
			\int_{B_r} |\nabla u - (\nabla u)_r|\mathrm{d}x
			\leq
			&	\left( \int_{B_r} |\nabla u - (\nabla u)_r|^p \mathrm{d}x\right) ^\frac{1}{p} |B_r|^\frac{p-1}{p}
			\notag\\ \leq
			& Cr^{N+\alpha} \notag
		\end{align}
		with $\alpha := \frac{\alpha_2}{p} \in (0,1)$, where $C$ is  a positive constant depending only on $N$,  $p$, $\lambda$, $m_1$, $m_2$, $a_0$, $\mu$, and $\Omega$. Thus, $\nabla u$ belongs to the Campanato space. According to the Campanato's embedding theorem, we conclude that $u$ is locally  $C^{1,\alpha}$ continuous and \eqref{Equ 4.5} holds true.
	\end{proof}

	\section{Porosity of the free boundary}\label{Sec.5}
	This section  first presents some results on the optimal growth and  non-degeneracy of solutions of equation \eqref{main}, and then state the local porosity of the free boundary. Without special statements,  $u$ always denotes the weak solution of   equation \eqref{main}.

	The first result is concerned with the optima growth of solutions near the free boundary.
	\begin{proposition}\label{grow}
		Let $y \in \Upsilon^+$ and $B_{r_1}(y) \subset \subset \Omega$  for some $r_1 > 0$. Then there exists a positive constant $C_1$ depending only on $N$, $p$, $\lambda$, $a_0$, and $a_1$ such that
		\begin{align*}
			|u(x)| \leq C_1 |x - y|^{\frac{p}{p-1}},\forall x \in B_r(y) \quad\text{and}\quad |\nabla u(x)| \leq C_1 |x - y|^{\frac{1}{p-1}},  { \forall x \in B_r(y) }
		\end{align*}
		hold  for every $r \in (0, r_1)$.
	\end{proposition}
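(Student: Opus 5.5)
The plan is the standard scaling-and-blow-up (compactness) argument for optimal growth at free boundary points, adapted to the variable coefficient $a$ and the extra term $m_2u^{\lambda-1}$.

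\emph{Reduction to a dyadic decay estimate.} Since $u\ge 0$, $u\in C^{1,\alpha}_{\mathrm{loc}}$ by Theorem~\ref{C^1}, and $y\in\Upsilon^+$, we have $u(y)=0$ and $\nabla u(y)=0$ because $y$ is a minimum point. Set $S_j(y):=\sup_{B_{2^{-j}}(y)}u$. It suffices to prove that there is a constant $C_0$, independent of $y$ and of the solution, such that
\begin{align*}
S_{j+1}(y)\le \max\Big\{C_0\,2^{-(j+1)\frac{p}{p-1}},\ 2^{-\frac{p}{p-1}}S_j(y)\Big\}\quad\text{for all } j\ \text{with } B_{2^{-j}}(y)\subset B_{r_1}(y).
\end{align*}
Iterating this inequality, and using the uniform $L^\infty$ bound of Proposition~\ref{exi-2} to start, gives $\sup_{B_r(y)}u\le C r^{p/(p-1)}$. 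The gradient estimate then follows by a rescaling. For $x\in\{u>0\}$, let $d=|x-y|$ and consider $w(z)=d^{-p/(p-1)}u(x+dz)$ on $B_{1/2}$. This function solves $\mathrm{div}(a(x+dz)|\nabla w|^{p-2}\nabla w)=m_1\chi_{\{w>0\}}-m_2d^{p/(p-1)\cdot(\lambda-1)}w^{\lambda-1}\chi_{\{w>0\}}$. Its right-hand side is bounded, $w$ is bounded by the growth estimate, and $a(x+d\cdot)$ has $C^1$ norm at most $a_1$. The $C^{1,\alpha}$ estimate of Theorem~\ref{C^1}, applied to this rescaled problem (its proof only uses boundedness of the right-hand side and of $a$), gives $|\nabla w(0)|\le C$, that is, $|\nabla u(x)|\le C d^{1/(p-1)}$. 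At points where $u=0$ we have $\nabla u=0$ trivially.

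\emph{Proof of the dyadic estimate by contradiction.} If the estimate fails, there are solutions $u_k$, points $y_k\in\Upsilon^+(u_k)$ and integers $j_k$ with $S_{j_k+1}(y_k)>k\,2^{-(j_k+1)p/(p-1)}$ and $S_{j_k+1}>2^{-p/(p-1)}S_{j_k}$. Define
\begin{align*}
v_k(z):=\frac{u_k(y_k+2^{-j_k}z)}{S_{j_k+1}(y_k)},\qquad z\in B_1 .
\end{align*}
These satisfy $0\le v_k\le 2^{p/(p-1)}$ on $B_1$, $\sup_{B_{1/2}}v_k=1$, and $v_k(0)=|\nabla v_k(0)|=0$. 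They solve an equation whose coefficient $a_k(z)=a(y_k+2^{-j_k}z)$ stays in $[a_0,a_1]$ and is uniformly Lipschitz. The right-hand side is multiplied by $2^{-j_kp}/S_{j_k+1}^{p-1}\le k^{-(p-1)}\to0$, together with the bounded term $u_k^{\lambda-1}$. Uniform $C^{1,\alpha}$ estimates (Theorem~\ref{C^1} in the scaled setting) give a subsequence with $v_k\to v$ in $C^1(B_{3/4})$ and $a_k\to a_\infty$ uniformly.

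\emph{Identifying the limit.} The coefficients $a_k$ are equicontinuous and, since the scale $2^{-j_k}$ is at most $r_1$, may or may not shrink. In either case $v$ is a nonnegative weak solution of $\mathrm{div}(a_\infty|\nabla v|^{p-2}\nabla v)=0$ with $a_\infty$ continuous and positive. It satisfies $v(0)=0$ and $\sup_{B_{1/2}}v=1$. The strong minimum principle (Harnack inequality) for this equation then forces $v\equiv0$ in $B_{3/4}$, which contradicts $\sup_{B_{1/2}}v=1$.

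\emph{Main obstacle.} The delicate point is the uniformity of the rescaled $C^{1,\alpha}$ and Harnack estimates when $a$ varies. Lemmas~\ref{lem2}--\ref{lem3} are stated only for the constant-coefficient $p$-Laplacian, so the comparison step must freeze $a$ at the center and absorb the error $\operatorname{osc}_{B_R}a\le a_1R$. This requires checking that the resulting constants depend only on $N,p,a_0,a_1$ and on the bound of the right-hand side. If that is troublesome, I would instead use the Harnack inequality for quasilinear equations with measurable coefficients, which needs only $a_0\le a\le a_1$, to get $C^{0}$ convergence plus energy bounds. I would then pass to the limit by Minty monotonicity using Lemma~\ref{Lemma6}, which avoids needing $C^1$ convergence for the contradiction step.
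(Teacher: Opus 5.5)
Your proposal is correct and follows the same route as the paper: it is the standard dyadic blow-up argument that the paper invokes by reference (as in Karp et al.) without giving details, followed by the usual rescaled $C^{1,\alpha}$ estimate for the gradient bound. The constant you obtain also depends on $m_1$, $m_2$, $\|u\|_{L^\infty(\Omega)}$ and, through the starting scale of the iteration, on $r_1$. At least the dependence on $m_1$ cannot be avoided: the paper's own nondegeneracy constant $C_2$ grows like $m_1^{1/(p-1)}$ and must satisfy $C_2\le C_1$, so it is the statement's dependence list that is too optimistic, not your proof that falls short.
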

	
	\begin{proof}
		Since $u$ is continuous and bouned and $\lambda>1$, the main nonlinearity of the inhomogeneous  term near the free boundary is determined  by   $m_1\chi_{u>0}$; see, e.g., \cite{Hu:2025}. Thus, the   blow-up technique can be applied to establish the optimal growth  as in \cite{Karp:2000,Challal:2009,Hu:2025,Zheng:2013}. Since the proof is standard,   the details are  omitted here.
	\end{proof}
	
	The following result gives the non-degeneracy of weak solutions of  equation \eqref{main} near the free boundary $\Upsilon^+$.
	
	\begin{proposition}\label{nondegeneracy}
		Let $y \in \Upsilon^+$ and let $B_{r_2} \subset\subset \Omega$ with $	r_2 \leq \min\left\{r_1, \frac{1}{a_1} , \left(\frac{m_1}{2m_2 C_1^{\lambda-1}}\right)^{\frac{p-1}{p(\lambda-1)}}\right\}$. Then there exists a positive constant $C_2$ depending only on $N$, $p$, $m_1$, $a_1$, and $r_2$ such that
		\begin{align}
			\sup_{\partial B_r(y) \cap \{u > 0\}} u \geq C_2 r^{\frac{p}{p-1}},\forall r \in \left( 0, \frac{r_2}{2}\right) ,\notag
		\end{align}
		where $r_1$ and $C_1$ are positive constants the same as in Proposition~\ref{grow}.
	\end{proposition}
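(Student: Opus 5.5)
The plan is the classical Caffarelli-type comparison argument, adapted to the operator with variable coefficient $a(x)$. By continuity of $u$ (Theorem~\ref{C^1}), it suffices to prove the estimate for points $x_0\in\{u>0\}$ close to $y$ and then let $x_0\to y$. So I fix $x_0\in\{u>0\}\cap B_{r/4}(y)$ and reduce to showing
\[
\sup_{\partial B_\rho(x_0)\cap\{u>0\}} u\ \ge\ u(x_0)+c\,\rho^{\frac{p}{p-1}},\qquad \rho\le r/2 .
\]
Since $B_\rho(x_0)\subset B_r(y)$, the maximum over $\partial B_r(y)\cap\{u>0\}$ then dominates it via the maximum principle argument described below. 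Letting $x_0\to y$ gives the claim with $C_2$ a fraction of $c$.

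First I will show that $u$ is a strict subsolution-type object in $\{u>0\}\cap B_{r_2}(y)$. On this set, Proposition~\ref{grow} gives
\[
u\le C_1 r_2^{\frac{p}{p-1}} .
\]
Hence
\[
m_2u^{\lambda-1}\le m_2C_1^{\lambda-1}r_2^{\frac{p(\lambda-1)}{p-1}}\le \tfrac{m_1}{2},
\]
by the choice of $r_2$. Therefore $\mathrm{div}(a|\nabla u|^{p-2}\nabla u)\ge m_1/2$ there.

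Next, I will take the barrier
\[
w(x)=u(x)-u(x_0)-c|x-x_0|^{\frac{p}{p-1}}
\]
and compute the operator on the radial part $\phi(x)=c|x-x_0|^{q}$ with $q=p/(p-1)$. One has $|\nabla\phi|^{p-2}\nabla\phi=(cq)^{p-1}(x-x_0)$, so
\[
\mathrm{div}\big(a|\nabla\phi|^{p-2}\nabla\phi\big)=(cq)^{p-1}\big(Na+\nabla a\cdot(x-x_0)\big)\le (cq)^{p-1}a_1(N+\rho) .
\]
The condition $r_2\le 1/a_1$ is what keeps the drift term $\nabla a\cdot(x-x_0)$ bounded by a constant. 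Choosing $c$ small, depending on $N,p,m_1,a_1$, gives $\mathrm{div}(a|\nabla\phi|^{p-2}\nabla\phi)<m_1/2$. A weak comparison principle for the monotone operator (Lemma~\ref{Lemma6}) then applies on the open set $U=\{u>0\}\cap B_\rho(x_0)$, where $u$ is a supersolution relative to $\phi+u(x_0)$. Precisely, $L u\ge m_1/2> L(\phi+u(x_0))$ in $U$. Weak comparison then says that if $u\le \phi+u(x_0)$ on $\partial U$, the same holds inside. This contradicts $u(x_0)>\phi(x_0)+u(x_0)-\epsilon$ strictly in the appropriate sense: I will compare $u$ with $\phi+u(x_0)-\epsilon$ and argue that $u-\phi-u(x_0)+\epsilon$ is positive at $x_0$, hence must be positive somewhere on $\partial U$.

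On $\partial\{u>0\}\cap B_\rho(x_0)$ we have $u=0<u(x_0)+\phi$, so the positive boundary point must lie on $\partial B_\rho(x_0)\cap\{u>0\}$. This gives the displayed inequality, and dropping $u(x_0)\ge0$ yields
\[
\sup_{\partial B_\rho(x_0)\cap\{u>0\}}u\ge c\rho^{q} .
\]
Finally, to pass from $x_0$ to $y$ I take $\rho=r-|x_0-y|$. The point attaining the sup lies in $\overline{B_r(y)}$, and the maximum over $\overline{B_r(y)}\cap\{u>0\}$ of the subsolution $u$ is attained on $\partial B_r(y)$ by the weak maximum principle (since $u=0$ on the free boundary part). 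Letting $x_0\to y$ finishes the proof.

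The main obstacle is rigorously justifying the comparison step. The operator has an $x$-dependent coefficient and the equation holds only weakly, on an open set $U$ whose boundary part $\partial\{u>0\}$ is irregular. I will handle this by testing with $(\phi+u(x_0)-\epsilon-u)^-\chi_U$, extended by zero, which lies in $W_0^{1,p}$ on the relevant region because $u$ is continuous and vanishes on $\partial\{u>0\}$. Monotonicity (Lemma~\ref{Lemma6}) together with the strict inequality of right-hand sides then forces this test function to vanish.
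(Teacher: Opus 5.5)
Your proposal is correct and is essentially the paper's argument. The growth estimate and the choice of $r_2$ give $\mathrm{div}(a|\nabla u|^{p-2}\nabla u)\ge m_1/2$ on $\{u>0\}$ near $y$. The radial barrier $c|x-x_0|^{\frac{p}{p-1}}$ satisfies $\mathrm{div}(a|\nabla \phi|^{p-2}\nabla \phi)=(cq)^{p-1}(Na+\nabla a\cdot(x-x_0))\le m_1/2$ with $q=\frac{p}{p-1}$, using $r_2\le 1/a_1$. Comparison on a ball intersected with $\{u>0\}$, centred at an interior point and followed by a limit toward $y$, then gives the bound.

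The differences are cosmetic. The paper compares $u$ with the unshifted barrier $C_2|x-y|^{\frac{p}{p-1}}$ on the full ball, so strictness comes directly from $u(y)>0=v(y)$. Your $u(x_0)$-shift with the $\epsilon$-perturbation and your maximum-principle transfer are therefore not needed. Your $\rho\le r/2$ and $\rho=r-|x_0-y|$ are inconsistent, though either choice works. Your explicit test-function justification of the comparison step supplies a detail the paper only invokes.
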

	
	\begin{proof}
		Let $y\in\{u>0\}$. For any $x_0\in\Upsilon^+$ satisfying $|y-x_{0}|\leq\frac{r_2}{2}$, we have $B_{r}(y)\subset B_{r_2}(x_{0})$ with $r<\frac{r_2}{2}$. Define $C_2:=\frac{p-1}{p}\left(\frac{m_1}{2N(a_1+1)}\right)^{\frac{1}{p-1}}$ and $v(x):=C_2|x-y|^{\frac{p}{p-1}}$ in $B_{r}(y)$. By  direct computations, we have
		\begin{align}
			\nabla v&=C_2\frac{p}{p-1}|x-y|^{\frac{2-p}{p-1}}(x-y),\notag\\
			|\nabla v|&=C_2\frac{p}{p-1}|x-y|^{\frac{1}{p-1}},\notag\\
			|\nabla v|^{p-2}&=C_2^{p-2}\left( \frac{p}{p-1}\right) ^{p-2}|x-y|^{\frac{p-2}{p-1}}\notag.
		\end{align}
		
		Noting that $r_2 < \frac{1}{a_1}$ and using the definition of $C_2$,  we obtain
		\begin{align}
			\mathrm{div}\left(a(x)|\nabla v|^{p-2}\nabla v\right)
			=
			&\mathrm{div}\left(C_2^{p-1}\left(\frac{p}{p-1}\right)^{p-1}a(x)(x-y)\right)
			\notag\\ =
			&C_2^{p-1}\left(\frac{p}{p-1}\right)^{p-1}\mathrm{div}\left( a(x)(x-y)\right)
			\notag\\ =
			&NC_2^{p-1}\left(\frac{p}{p-1}\right)^{p-1}a(x) + C_2^{p-1}\left(\frac{p}{p-1}\right)^{p-1}(x-y)\nabla a(x)
			\notag\\\leq
			& Na_1C_2^{p-1}\left(\frac{p}{p-1}\right)^{p-1} + Na_1r_2C_2^{p-1}\left(\frac{p}{p-1}\right)^{p-1}
			\notag\\ \leq
			&Na_1C_2^{p-1}\left(\frac{p}{p-1}\right)^{p-1} + NC_2^{p-1}\left(\frac{p}{p-1}\right)^{p-1}
			\notag\\ =
			&N(a_1+1)C_2^{p-1}\left(\frac{p}{p-1}\right)^{p-1}
			\notag\\ =
			&\frac{m_1}{2}\   \text{in}\   B_{r}(y).\label{6.1}
		\end{align}
		Using the fact that  $B_r(y) \subset B_{r_2}(x_0)$ and Proposition~\ref{grow}, we deduce that
		\begin{align*}
			\mathrm{div}\left(a(x)|\nabla u|^{p-2}\nabla u\right)
			=
			&  m_1 - m_2 u^{\lambda-1}
			\notag\\ \geq
			&  m_1 -  m_2 C_1^{\lambda-1} r_{2}^{\frac{p(\lambda-1)}{p-1}}
			\notag\\ \geq
			&\frac{m_1}{2}\   \text{in}\   B_r(y) \cap \{u > 0\},
		\end{align*}
		which, along with \eqref{6.1}, implies that
		\begin{align}
			-\mathrm{div}\left(a(x)|\nabla u|^{p-2}\nabla u\right) \leq -\mathrm{div}\left(a(x)|\nabla v|^{p-2}\nabla v\right)\   \text{in}\   B_r(y) \cap \{u > 0\}.\notag
		\end{align}
		Note that the definitions of $u$ and $v$ ensure that
		\begin{align}
			u=0\leq v \text{ on } B_{r}(y)\cap\Upsilon^{+}.\notag
		\end{align}
		If $u\leq v$ on $\partial B_{r}(y)\cap\{u>0\}$, then by the comparison principle, we obtain
		\begin{align}
			u\leq v  \text{ in } B_{r}(y)\cap\{u>0\}.\notag
		\end{align}
		However, $u(y)>0=v(y)$, which yields a contradiction. Therefore, there exists $y_{0}\in\partial B_{r}(y)\cap\{u>0\}$ such that
		\begin{align}
			u(y_{0})\geq v(y_{0})=C_2 r^{\frac{p}{p-1}},\notag
		\end{align}
		so we have
		\begin{align}\label{6.7}
			\sup_{\partial B_{r}(y)\cap\{u>0\}}u\geq C_2r^{\frac{p}{p-1}}.
		\end{align}
		Now for $y \in \partial\{u > 0\}$, we take a sequence $\{y_i\} \subset \{u > 0\}$ such that $y_i \to y$ as $i \to \infty$. By \eqref{6.7}, we have
		\begin{align}
			\sup_{\partial B_r(y_i) \cap \{u > 0\}} u \geq C_2 r^{\frac{p}{p-1}},\notag
		\end{align}
		which, along with the continuity of $u$, implies the desired result.
	\end{proof}
	
	Recall that a set $ P \subset \mathbb{R}^N $ is called porous with porosity constant $\delta$ if there exist constants $ r_0 > 0 $ and $ \delta > 0 $ such that
	\begin{align}
		\forall x \in P,  \forall r \in (0, r_0) \quad \Rightarrow \quad
		\exists y \in \mathbb{R}^N \text{ such   that } B_{\delta r}(y) \subset B_r(x) \setminus P.\notag
	\end{align}
	It is known that the Hausdorff dimension of a porous set  is no more than $N - c\delta^N$ with $c$ depending only $N$; {see, e.g., \cite{Koskela:1997,Martio:1987}.}

	For the obstacle problem \eqref{main},
	the optimal growth and non-degeneracy of solutions imply the local porosity of the free boundary, which is stated as the following theorem.

	\begin{theorem}\label{duo}
		Let $ u $ be  a  solution of equation \eqref{main}. Then, for every compact set $ K \subset \Omega $, the intersection $ K \cap \partial\{u > 0\} $ is porous with porosity constant $ \delta $ depending only on $N$, $p$, $\lambda$, $m_1$, $m_2$, $a_0$, $a_1$,  and $ \Omega $.
	\end{theorem}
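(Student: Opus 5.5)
The argument will be the standard "growth plus non-degeneracy" scheme. Fix a compact $K\subset\Omega$, let $d:=\mathrm{dist}(K,\partial\Omega)$, and choose $r_0>0$ small, depending on $d$ and on the radii $r_1,r_2$ of Propositions~\ref{grow} and \ref{nondegeneracy}. The choice should ensure that for every $x\in K\cap\partial\{u>0\}$ and $r<r_0$ we have $B_{2r}(x)\subset\subset\Omega$, and that both propositions apply on $B_{2r}(x)$. The aim is to produce, for such $x$ and $r$, a ball $B_{\delta r}(z)\subset B_r(x)\cap\{u>0\}$. This ball then misses $\partial\{u>0\}$ because $\{u>0\}$ is open, $u$ being continuous by Theorem~\ref{C^1}.

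\textbf{Step 1: a point where $u$ is large.} By Proposition~\ref{nondegeneracy} applied at $x$ with radius $r/2$, there is $y\in\partial B_{r/2}(x)\cap\{u>0\}$ with
\[
u(y)\ge C_2 (r/2)^{\frac{p}{p-1}}.
\]
\textbf{Step 2: $y$ is far from the free boundary.} Let $\rho:=\mathrm{dist}(y,\partial\{u>0\})\le r/2$, and pick $x'\in\partial\{u>0\}$ with $|y-x'|=\rho$. The point $x'$ lies in $B_r(x)$, hence in a slightly enlarged compact set $K'\subset\Omega$, on which Proposition~\ref{grow} holds with uniform constants. Applying Proposition~\ref{grow} at $x'$ gives
\[
C_2 (r/2)^{\frac{p}{p-1}}\le u(y)\le C_1\rho^{\frac{p}{p-1}},
\]
so $\rho\ge 2\delta r$ with
\[
\delta:=\tfrac14\,(C_2/C_1)^{\frac{p-1}{p}}.
\]
Since $C_2$ is small by definition, $\delta<1/4$.

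\textbf{Step 3: conclusion.} Take $z=y$. Then $B_{\delta r}(y)\subset B_\rho(y)\subset\{u>0\}$, so this ball contains no free boundary point. Moreover $|y-x|=r/2$ and $\delta<1/2$, so $B_{\delta r}(y)\subset B_r(x)$. Hence $B_{\delta r}(y)\subset B_r(x)\setminus\partial\{u>0\}$, which is porosity with constant $\delta$. This $\delta$ depends only on the constants entering $C_1$ and $C_2$, that is, on $N,p,\lambda,m_1,m_2,a_0,a_1,\Omega$.

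\textbf{Main obstacle: uniformity.} The delicate point is to make the constants in Propositions~\ref{grow} and \ref{nondegeneracy} uniform over all free boundary points near $K$. In particular, $r_2$ and the smallness conditions in Proposition~\ref{nondegeneracy} must be fixed once for the enlarged compact set $K'$, using the uniform $L^\infty$ bound of Proposition~\ref{exi-2} and the uniform $C^{1,\alpha}$ bound of Theorem~\ref{C^1}. One must also check that the auxiliary free boundary point $x'$ stays in the region where Proposition~\ref{grow} is valid. This holds provided $r_0<d/4$, since then $x'\in B_r(x)$ remains at distance at least $d/2$ from $\partial\Omega$.
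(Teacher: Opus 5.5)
Your proposal is correct and is the standard growth-plus-non-degeneracy argument (Proposition~\ref{grow} and Proposition~\ref{nondegeneracy}) that the paper invokes without writing out, simply citing the literature. One small point: the bound $\delta\le 1/4$ does not come from $C_2$ being ``small by definition''. It follows from your own chain $C_2(r/2)^{\frac{p}{p-1}}\le u(y)\le C_1\rho^{\frac{p}{p-1}}$ together with $\rho\le r/2$, which forces $C_2\le C_1$.
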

	\begin{proof}
		The proof is standard; see, e.g.,  \cite{Karp:2000,Challal:2009,Hu:2024}.
	\end{proof}

	\section{The Corresponding Penalized Problem}\label{Sec.6}

	In section,  we study properties  of  solutions $u_\varepsilon$ to the   penalty problem  \eqref{main1}, corresponding which  the limit of  $u_\varepsilon$  in $W^{1,p}_0(\Omega)$ is  a solution, denoted by $u$, to the obstacle problem \eqref{main} (see the proof of Proposition~\ref{exi}). Note that $u\geq 0$ a.e. in $\Omega$, thus $u_\varepsilon \geq 0$ a.e. in $\Omega$. Moreover, proceeding in the same way as in the proof  of  Proposition~\ref{exi-2}, it can   be shown that  and
	$u_\varepsilon$ is uniformly bounded with
	$\|u_\varepsilon\|_{L^\infty(\Omega)}$   depending only on $N$, $p$, $\lambda$, $m_1$, $m_2$, $a_0$, and  $\Omega$. Furthermore,  a similar proof of  Theorem \ref{C^1} ensures that  $u_\varepsilon\in C^{1,\alpha}_{\mathrm{loc}}(\Omega)$ with some $\alpha \in (0,1)$ and that for any $\Omega' \subset \subset \Omega$,   $\|u_\varepsilon\|_{C^{1,\alpha}(\Omega')}$  {depends only on } $N$, $p$, $\lambda$, $m_1$, $m_2$, $a_0$, $\Omega'$, and $\Omega$. In particular, there exists a positive constant $c_1$  independent of $\varepsilon$  such that
	\begin{align}\label{c1}
		\|u_\varepsilon\|_{L^\infty(\Omega')} + \|\nabla u_\varepsilon\|_{L^\infty(\Omega')} \leq c_1.
	\end{align}
	In addition,    for any $\beta \in (0,\alpha)$ and any relatively compact  set $\Omega'' \subset \subset \Omega'$, $u_\varepsilon \to u$ in $C^{1,\beta}(\Omega'')$.  In the sequel, we always denote by $u_\varepsilon$ a non-negative   weak solution of equation \eqref{main1} and satisfies \eqref{c1}.

	The following result indicates that every weak solution   of the penalty problem  \eqref{main1} is actually a strong solution.
	\begin{proposition}\label{a.e.}
		The weak function $u_\varepsilon$ satisfies \eqref{main1} almost everywhere in $\Omega$.
	\end{proposition}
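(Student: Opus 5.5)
To show that the weak solution $u_\varepsilon$ satisfies \eqref{main1} almost everywhere, the plan is to prove that the flux $V_\varepsilon := a(x)|\nabla u_\varepsilon|^{p-2}\nabla u_\varepsilon$ belongs to $W^{1,2}_{\mathrm{loc}}(\Omega;\mathbb{R}^N)$, or at least $W^{1,1}_{\mathrm{loc}}$. Once this is known, the weak formulation
\begin{align*}
	\int_\Omega V_\varepsilon\cdot\nabla\varphi\,\mathrm{d}x=-\int_\Omega f_\varepsilon\varphi\,\mathrm{d}x,\qquad f_\varepsilon:=m_1\chi_\varepsilon(u_\varepsilon)-m_2(u_\varepsilon^+)^{\lambda-1},
\end{align*}
holds for all $\varphi\in C_0^\infty(\Omega)$. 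Integrating by parts on the left gives $\int_\Omega(\mathrm{div}V_\varepsilon-f_\varepsilon)\varphi\,\mathrm{d}x=0$. Here $\mathrm{div}V_\varepsilon-f_\varepsilon\in L^1_{\mathrm{loc}}(\Omega)$, since $f_\varepsilon$ is continuous and bounded by \eqref{c1}. The Du Bois-Reymond lemma (Lemma~\ref{lem5}) then yields $\mathrm{div}V_\varepsilon=f_\varepsilon$ a.e. in $\Omega$, which is the claim.

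The key step is therefore the Sobolev regularity of the flux. Note first that $f_\varepsilon$ is Lipschitz on each $\Omega'\subset\subset\Omega$. Indeed, $\chi_\varepsilon$ is smooth, and $s\mapsto (s^+)^{\lambda-1}$ is locally Lipschitz because $\lambda-1\ge p-1\ge1$. Combined with \eqref{c1}, this gives $f_\varepsilon\in W^{1,\infty}(\Omega')$.

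With this in hand, I would use the difference-quotient method for $p$-Laplace type equations with $C^1$ (here $C^2$) coefficients. For a cutoff $\eta\in C_0^\infty(\Omega')$, test the equation with $\varphi=\Delta_{-h}(\eta^2\Delta_h u_\varepsilon)$, where $\Delta_h$ is the difference quotient in direction $e_k$. The left side becomes $\int \eta^2\,\Delta_h V_\varepsilon\cdot\Delta_h\nabla u_\varepsilon$ plus a cutoff term. The coefficient variation is handled by splitting
\begin{align*}
	\Delta_hV_\varepsilon=a(x+he_k)\,\Delta_h\bigl(|\nabla u_\varepsilon|^{p-2}\nabla u_\varepsilon\bigr)+(\Delta_ha)\,|\nabla u_\varepsilon|^{p-2}\nabla u_\varepsilon .
\end{align*}
The second term is bounded uniformly in $h$ by $\|a\|_{C^1}$ and \eqref{c1}. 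For the first term, the standard structural inequality $\bigl(|\xi|^{p-2}\xi-|\eta|^{p-2}\eta\bigr)\cdot(\xi-\eta)\ge c\,(|\xi|+|\eta|)^{p-2}|\xi-\eta|^2$ applies. The right side is controlled after one discrete integration by parts, since $f_\varepsilon\in W^{1,\infty}$. Young's inequality then gives
\begin{align*}
	\int \eta^2\bigl(|\nabla u_\varepsilon(x+he_k)|+|\nabla u_\varepsilon(x)|\bigr)^{p-2}|\Delta_h\nabla u_\varepsilon|^2\,\mathrm{d}x\le C
\end{align*}
uniformly in $h$. This is the classical bound showing $|\nabla u_\varepsilon|^{\frac{p-2}{2}}\nabla u_\varepsilon\in W^{1,2}_{\mathrm{loc}}$. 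Next, since $|\nabla u_\varepsilon|$ is bounded, the pointwise estimate $\bigl||\xi|^{p-2}\xi-|\eta|^{p-2}\eta\bigr|\le C(|\xi|+|\eta|)^{p-2}|\xi-\eta|$ together with Cauchy--Schwarz gives $\|\Delta_h(|\nabla u_\varepsilon|^{p-2}\nabla u_\varepsilon)\|_{L^2(\mathrm{supp}\,\eta)}\le C$. Hence $V_\varepsilon\in W^{1,2}_{\mathrm{loc}}$.

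The main obstacle is the degeneracy at $p>2$: we cannot expect $u_\varepsilon\in W^{2,2}_{\mathrm{loc}}$, because the weight $(|\nabla u_\varepsilon|)^{p-2}$ may vanish on the critical set. This is why the argument is aimed at the flux $V_\varepsilon$ rather than at $D^2u_\varepsilon$, and why the bound on $\nabla u_\varepsilon$ from \eqref{c1} is essential in converting the weighted estimate into an unweighted $L^2$ bound on $\Delta_hV_\varepsilon$. A secondary point is that the test function $\varphi$ lies in $W^{1,p}_0$, which holds since $u_\varepsilon\in W^{1,p}$ and $\eta$ has compact support; so the computation is rigorous without a further approximation step.
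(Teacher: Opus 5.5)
Your proof is correct, and its core is the same as the paper's: a weighted Caccioppoli estimate for the differentiated equation, followed by the Du Bois-Reymond lemma (Lemma~\ref{lem5}). The paper works formally. It differentiates \eqref{main1} in $x_i$, tests with $u_{\varepsilon x_i}\phi^2$, obtains $\int_{B_{r/2}}|\nabla u_\varepsilon|^{p-2}|\nabla u_{\varepsilon x_i}|^2\,\mathrm{d}x\le Cr^{N-2}$, and then bounds the derivative of the flux pointwise by $|a_{x_i}||\nabla u_\varepsilon|^{p-1}+(p-1)a|\nabla u_\varepsilon|^{p-2}|D^2u_\varepsilon|$.

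Your difference quotients buy rigor. The paper's computation presupposes that $D^2u_\varepsilon$ exists and that the chain rule applies to the flux, which is exactly the delicate point when $p>2$. You justify the same computation and arrive at $V_\varepsilon\in W^{1,2}_{\mathrm{loc}}$ without ever needing $D^2u_\varepsilon$.

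The paper's version buys uniformity in $\varepsilon$. It discards the $m_1$-term using $\chi_\varepsilon'\ge 0$, so the constant in \eqref{5.16}--\eqref{5.17} does not depend on $\varepsilon$, and that uniform bound is reused in \eqref{7.6} in the proof of Proposition~\ref{ho2}. Your estimate goes through the Lipschitz constant of $f_\varepsilon$, which blows up like $1/\varepsilon$ since $\|\chi_\varepsilon'\|_\infty\ge 1/\varepsilon$. This is harmless for the present proposition, where $\varepsilon$ is fixed, but as written it would not serve that later purpose. The repair inside your scheme is easy: since $\chi_\varepsilon$ is nondecreasing, $\Delta_h\chi_\varepsilon(u_\varepsilon)\,\Delta_hu_\varepsilon\ge 0$. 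Hence the $m_1$-part of $-\int\eta^2\Delta_hf_\varepsilon\,\Delta_hu_\varepsilon\,\mathrm{d}x$ has a favorable sign and can be dropped, leaving only the $m_2$-term, which \eqref{c1} bounds independently of $\varepsilon$.

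One detail should be made explicit. The coefficient term $(\Delta_ha)|\nabla u_\varepsilon|^{p-2}\nabla u_\varepsilon\cdot\Delta_h\nabla u_\varepsilon$ is not controlled just because its first factor is bounded, since you only control $\Delta_h\nabla u_\varepsilon$ with the weight. It is absorbed because $|\nabla u_\varepsilon(x)|^{p-1}\le W^{\frac{p-2}{2}}|\nabla u_\varepsilon(x)|^{\frac{p}{2}}$, where $W=|\nabla u_\varepsilon(x+he_k)|+|\nabla u_\varepsilon(x)|$. This lets Young's inequality place the term against the weighted quantity $\int\eta^2W^{p-2}|\Delta_h\nabla u_\varepsilon|^2\,\mathrm{d}x$.
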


	\begin{proof}
		By Lemma~\ref{lem5}, it suffices to show that
		\begin{align}
			\left( a(x) |\nabla u_\varepsilon|^{p-2} \nabla u_\varepsilon \right) _{x_i}\in \left( L^1_{\mathrm{loc}}(\Omega)\right) ^N.\notag
		\end{align}
		Indeed, a direct computation yields
		\begin{align}
			\left| \left( a(x) |\nabla u_\varepsilon|^{p-2} \nabla u_\varepsilon \right)_{x_i} \right|
			=
			& \left| a_{x_i}(x) |\nabla u_\varepsilon|^{p-2} \nabla u_\varepsilon +(p-1) a(x)  |\nabla u_\varepsilon|^{p-2} \nabla u_{\varepsilon x_i}\right|
			\notag\\ \leq
			& |a_{x_i}(x)| |\nabla u_\varepsilon|^{p-1}  + (p-1) a(x) |\nabla u_\varepsilon|^{p-2} |D^2 u_\varepsilon|.\label{5.3}
		\end{align}
		
		Define $\Psi:=u_{\varepsilon x_{i}}\phi^{2}$ with $\phi\in C^{\infty}_{0}(B_{3r/5})$ satisfying
		\begin{align}
			\begin{cases}
				0\leq\phi\leq 1  \text{ in } B_{3r/5}, \\
				\phi=1           \text{ in } B_{r/2}, \\
				|\nabla\phi|\leq\frac{4}{r}  \text{ in } B_{3r/5},
			\end{cases}\notag
		\end{align}
		where $B_{r}\subset \subset\Omega$ is a ball with some fixed radius $r>0$.
		
		Differentiating \eqref{main1} with respect to $x_i$, then multiplying the result by $\Psi$ and integrating over $B_{3r/5}$, we obtain
		\begin{align}
			I_1 := &\int_{B_{3r/5}} \left( a(x) |\nabla u_\varepsilon|^{p-2} \nabla u_\varepsilon \right)_{x_i}  \nabla \Psi  \mathrm{d}x
			\notag \\ =
			& \int_{B_{3r/5}} \left( -m_1 \chi_\varepsilon(u_\varepsilon) + m_2 (u_\varepsilon^+)^{\lambda-1} \right)_{x_i} \Psi  \mathrm{d}x
			\notag \\ =
			& \int_{B_{3r/5}} m_2\left( (u_\varepsilon^+)^{\lambda-1} \right)_{x_i} \Psi  \mathrm{d}x -\int_{B_{3r/5}} m_1 \left( \chi_\varepsilon(u_\varepsilon) \right)_{x_i} \Psi  \mathrm{d}x
			\notag \\ =
			&: I_2 + I_3. \label{5.5}
		\end{align}
		
		Using the definition of $\Psi$, the term $I_1$ expands as
		\begin{align}
			I_1
			=
			&\int_{B_{3r/5}}(a(x)|\nabla u_\varepsilon|^{p-2}\nabla u_\varepsilon)_{x_i} \nabla \Psi \mathrm{d}x
			\notag \\ =
			&\int_{B_{3r/5}}\left((p-1) a(x)  |\nabla u_\varepsilon|^{p-2} \nabla u_{\varepsilon x_i}+ a_{x_i}(x) |\nabla u_\varepsilon|^{p-2} \nabla u_\varepsilon \right)\left(\nabla u_{\varepsilon x_i}\phi^2 +2 u_{\varepsilon x_i}\phi\nabla \phi \right) \mathrm{d}x
			\notag \\ =
			&\int_{B_{3r/5}} (p-1) a(x)  |\nabla u_\varepsilon|^{p-2} |\nabla u_{\varepsilon x_i}|^2\phi^2\mathrm{d}x +\int_{B_{3r/5}}2(p-1) a(x)  |\nabla u_\varepsilon|^{p-2} \nabla u_{\varepsilon x_i} u_{\varepsilon x_i} \phi\nabla \phi\mathrm{d}x
			\notag \\
			&+\int_{B_{3r/5}} a_{x_i}(x) |\nabla u_\varepsilon|^{p-2} \nabla u_\varepsilon \nabla u_{\varepsilon x_i}\phi^2\mathrm{d}x + \int_{B_{3r/5}} 2a_{x_i}(x) |\nabla u_\varepsilon|^{p-2} \nabla u_{\varepsilon} u_{\varepsilon x_i}\phi\nabla \phi \mathrm{d}x
			\notag\\ =
			:&I_{11}+I_{12}+I_{13}+I_{14}. \label{5.6}
		\end{align}
		
		Since $p \geq 2$, it follows that
		\begin{align}
			I_{11}
			=
			&\int_{B_{3r/5}} (p-1) a(x)  |\nabla u_\varepsilon|^{p-2} |\nabla u_{\varepsilon x_i}|^2\phi^2\mathrm{d}x
			\notag \\ \geq
			&\int_{B_{3r/5}}  a_0  |\nabla u_\varepsilon|^{p-2} |\nabla u_{\varepsilon x_i}|^2\phi^2\mathrm{d}x. \label{5.7}
		\end{align}
		
		Combining the definition of $\Psi$, Young's inequality, and \eqref{c1}, we obtain
		\begin{align}
			|I_{12}|
			\leq
			&\int_{B_{3r/5}}2(p-1) a_1  |\nabla u_\varepsilon|^{p-2} |\nabla u_{\varepsilon x_i}| |u_{\varepsilon x_i}| |\phi||\nabla \phi|\mathrm{d}x
			\notag \\ \leq
			& \frac{8}{r}\int_{B_{3r/5}}(p-1) a_1  |\nabla u_\varepsilon|^{p-2} |\nabla u_{\varepsilon x_i}| |u_{\varepsilon x_i}| |\phi|\mathrm{d}x
			\notag \\ \leq
			& \frac{a_0}{2}\int_{B_{3r/5}}  |\nabla u_\varepsilon|^{p-2} |\nabla u_{\varepsilon x_i}|^2  |\phi|^2\mathrm{d}x + \frac{32(p-1)^2 a_1^2}{a_0 r^2}\int_{B_{3r/5}}  |\nabla u_\varepsilon|^{p-2} | u_{\varepsilon x_i}|^2 \mathrm{d}x
			\notag \\ \leq
			& \frac{a_0}{2}\int_{B_{3r/5}}  |\nabla u_\varepsilon|^{p-2} |\nabla u_{\varepsilon x_i}|^2  |\phi|^2\mathrm{d}x + \frac{32(p-1)^2 a_1^2}{a_0 r^2}\int_{B_{3r/5}}  |\nabla u_\varepsilon|^p \mathrm{d}x
			\notag \\ \leq
			& \frac{a_0}{2}\int_{B_{3r/5}}  |\nabla u_\varepsilon|^{p-2} |\nabla u_{\varepsilon x_i}|^2  |\phi|^2\mathrm{d}x + Cr^{N-2},\label{5.8}
		\end{align}
		where $C$ is a positive  constant depending only on $p$, $a_0$, $a_1$, and $c_1$.

		By Young's inequality with $\varepsilon_1>0$, and \eqref{c1}, we have
		\begin{align}
			|I_{13}|
			=
			&\int_{B_{3r/5}} |a_{x_i}(x)| |\nabla u_\varepsilon|^{p-1} | \nabla u_{\varepsilon x_i}| |\phi|^2\mathrm{d}x
			\notag\\ \leq
			& \int_{B_{3r/5}} \frac{\varepsilon_1}{2} |\nabla u_\varepsilon|^{p-2} | \nabla u_{\varepsilon x_i}|^2 |\phi|^2\mathrm{d}x +  \int_{B_{3r/5}}\frac{1}{2\varepsilon_1}| a_{x_i}(x)|^2 |\nabla u_\varepsilon|^p |  |\phi|^2\mathrm{d}x
			\notag\\ \leq
			& \frac{\varepsilon_1}{2}\int_{B_{3r/5}}|\nabla u_\varepsilon|^{p-2} | \nabla u_{\varepsilon x_i}|^2 |\phi|^2\mathrm{d}x +  \frac{a_1^2}{2\varepsilon_1}\int_{B_{3r/5}} |\nabla u_\varepsilon|^p |  |\phi|^2\mathrm{d}x.	\label{5.9}
		\end{align}
		
		Choosing a suitable $\varepsilon_1$, equation \eqref{5.9} becomes
		\begin{align}\label{5.10}
			|I_{13}| \leq \frac{a_0}{4}\int_{B_{3r/5}}|\nabla u_\varepsilon|^{p-2} | \nabla u_{\varepsilon x_i}|^2 |\phi|^2\mathrm{d}x + Cr^N ,	
		\end{align}
		where $C$ is a positive  constant depending  only on  $a_0$, $a_1$, and $c_1$.
		
		Using the definition of $\phi$ and \eqref{c1}, we obtain
		\begin{align}
			|I_{14}|
			=
			&\int_{B_{3r/5}} 2a_{x_i}(x) |\nabla u_\varepsilon|^{p-2} \nabla u_{\varepsilon} u_{\varepsilon x_i}\phi\nabla \phi \mathrm{d}x
			\notag\\ \leq
			& \int_{B_{3r/5}} \frac{8a_1}{r} |\nabla u_\varepsilon|^p \mathrm{d}x
			\notag\\\leq
			& Cr^{N-1},\label{5.11}
		\end{align}
		{where $C$ is a positive  constant depending  only on  $a_1$ and $c_1$.}	
		
		From the definition of $\phi$, \eqref{c1}, and the Poincar\'e's inequality, we deduce that
		\begin{align}
			I_2
			=
			&\int_{B_{3r/5}} m_2\left( (u_\varepsilon^+)^{\lambda-1} \right)_{x_i} \Psi  \mathrm{d}x
			\notag\\ \leq
			& \int_{B_{3r/5}} m_2(\lambda-1) (u_\varepsilon)^{\lambda-2} (u_{\varepsilon x_i})^2 \phi^2  \mathrm{d}x
			\notag\\ \leq
			& \int_{B_{3r/5}} m_2(\lambda-1)c_1^\lambda  \mathrm{d}x
			\notag\\ \leq
			&Cr^N,\label{5.12}
		\end{align} 	
		where $C$ is a positive  constant depending  only on $N$, $\lambda$,  $m_2$, $c_1$,  and $r$.
		
		Note that $\chi'_\varepsilon(s) \geq 0$. It follows that
		\begin{align}
			I_3
			=
			&-\int_{B_{3r/5}} m_1 \left( \chi_\varepsilon(u_\varepsilon) \right)_{x_i} \Psi  \mathrm{d}x
			\notag\\=
			&-\int_{B_{3r/5}} m_1 \chi'_{\varepsilon} (u_\varepsilon) (u_{\varepsilon x_i})^2 \phi^2\mathrm{d}x
			\notag\\ \leq
			& 0 .\label{5.13}
		\end{align}
		
		Using \eqref{5.5}, \eqref{5.12}, and \eqref{5.13}, we have
		\begin{align}\label{5.14}
			I_1=I_2+I_3\leq Cr^N,
		\end{align}
		where $C$ is a positive  constant depending  only on $N$, $\lambda$, $m_2$, $c_1$,  and $r$.
		
		From \eqref{5.6}, \eqref{5.7}, \eqref{5.8}, \eqref{5.10}, and \eqref{5.11}, we infer that
		\begin{align}
			I_1
			\geq
			& I_{11}-|I_{12}|-|I_{13}|-|I_{14}|
			\notag\\\geq
			& \int_{B_{3r/5}} \frac{ a_0 }{4} |\nabla u_\varepsilon|^{p-2} |\nabla u_{\varepsilon x_i}|^2\phi^2\mathrm{d}x  - Cr^{N-2}
			\notag\\ \geq
			& \int_{B_{r/2}} \frac{ a_0 }{4} |\nabla u_\varepsilon|^{p-2} |\nabla u_{\varepsilon x_i}|^2\phi^2\mathrm{d}x  - Cr^{N-2}
			\notag\\ =
			& \int_{B_{r/2}} \frac{ a_0 }{4} |\nabla u_\varepsilon|^{p-2} |\nabla u_{\varepsilon x_i}|^2\mathrm{d}x  - Cr^{N-2},\label{5.15}
		\end{align}
		where $C$ is a positive  constant depending  only on $p$, $a_0$, $a_1$, and $c_1$.
		
		{By \eqref{5.14} and \eqref{5.15},}	 we obtain
		\begin{align}\label{5.16}
			\int_{B_{r/2}}  |\nabla u_\varepsilon|^{p-2} |\nabla u_{\varepsilon x_i}|^2\mathrm{d}x  \leq Cr^{N-2},
		\end{align}
		where $C$ is a positive  constant depending  only on $N$, $p$, $\lambda$, $m_2$, $c_1$,   $a_0$,  $a_1$,  and $r$.
		
		Summing \eqref{5.16} over $i=1$ to $N$ leads to
		\begin{align}\label{5.17}
			\int_{B_{r/2}}  |\nabla u_\varepsilon|^{p-2} |D^2u_\varepsilon |\mathrm{d}x  \leq \int_{B_{r/2}} \frac{1}{2} |\nabla u_\varepsilon|^{p-2} |D^2u_\varepsilon |^2\mathrm{d}x + \int_{B_{r/2}} \frac{1}{2} |\nabla u_\varepsilon|^{p-2} \mathrm{d}x\leq Cr^{N-2}.
		\end{align}
		Note that \eqref{c1} ensures that
		\begin{align}\label{5.18}
			\int_{B_{r/2}}|a_{x_i}(x)| |\nabla u_\varepsilon|^{p-1} \leq a_1 \int_{B_{r/2}} |\nabla u_\varepsilon|^{p-1} \leq Cr^N,
		\end{align}
		where $C$ is a positive  constant depending  only on  $p$, $a_1$, and $c_1$.
		
		{	From \eqref{5.17}, \eqref{5.18}, and the boundedness of $a $, } we deduce that
		\begin{align}
			|a_{x_i}(x)| |\nabla u_\varepsilon|^{p-1} + (p-1) a(x) |\nabla u_\varepsilon|^{p-2} |D^2 u_\varepsilon| \in L^1_{\mathrm{loc}}(\Omega),\notag
		\end{align}
		which, along with \eqref{5.3}, implies that
		\begin{align}
			\left( a(x) |\nabla u_\varepsilon|^{p-2} \nabla u_\varepsilon \right)_{x_i} \in \left( L^1_{\mathrm{loc}}(\Omega)\right) ^N.\notag
		\end{align}
	\end{proof}
	
	The following proposition presents a quantitative lower bound that characterizes the non-degeneracy of both the gradient and the Hessian of the penalized solution.
	\begin{proposition}\label{ho1}
		It holds that
		\begin{align}
			\frac{1}{C_3}\left| m_1 \chi_\varepsilon \left( u_\varepsilon \right) - m_2 \left(  u_{\varepsilon}^{+} \right) ^{\lambda-1}\right| ^2
			\leq |\nabla u_\varepsilon|^{2(p-1)}+\left(|\nabla u_\varepsilon|^{p-2} |D^2 u_\varepsilon|\right) ^2\ \text{a.e.\ in}\ \Omega,\notag
		\end{align}
		where  $C_3$  is a positive constant depending only on $N$, $p$,  and $a_1$.
	\end{proposition}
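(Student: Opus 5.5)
By Proposition~\ref{a.e.}, $u_\varepsilon$ satisfies \eqref{main1} almost everywhere in $\Omega$. So the plan is to expand the divergence pointwise and bound each piece by the two quantities on the right-hand side; this is a pointwise algebraic estimate. Wherever $\nabla u_\varepsilon\neq 0$, $u_\varepsilon$ is twice weakly differentiable, which was the content of the proof of Proposition~\ref{a.e.}. There we have
\begin{align*}
	m_1\chi_\varepsilon(u_\varepsilon)-m_2(u_\varepsilon^+)^{\lambda-1}
	=\nabla a\cdot|\nabla u_\varepsilon|^{p-2}\nabla u_\varepsilon
	+a\,|\nabla u_\varepsilon|^{p-2}\Delta u_\varepsilon
	+(p-2)a\,|\nabla u_\varepsilon|^{p-4}\langle D^2u_\varepsilon\nabla u_\varepsilon,\nabla u_\varepsilon\rangle .
\end{align*}
This is the same expansion as \eqref{5.3}, now written with the full trace and the $(p-2)$ term.

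Next I bound the three terms separately. The first satisfies $|\nabla a\cdot|\nabla u_\varepsilon|^{p-2}\nabla u_\varepsilon|\le a_1|\nabla u_\varepsilon|^{p-1}$. For the second, $|\Delta u_\varepsilon|\le\sqrt N|D^2u_\varepsilon|$ (Frobenius norm), so it is at most $\sqrt N a_1|\nabla u_\varepsilon|^{p-2}|D^2u_\varepsilon|$. For the third, $|\langle D^2u_\varepsilon\nabla u_\varepsilon,\nabla u_\varepsilon\rangle|\le|D^2u_\varepsilon||\nabla u_\varepsilon|^2$, so it is at most $(p-2)a_1|\nabla u_\varepsilon|^{p-2}|D^2u_\varepsilon|$. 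Adding these and using $(A+B)^2\le 2A^2+2B^2$ gives
\begin{align*}
	\left|m_1\chi_\varepsilon(u_\varepsilon)-m_2(u_\varepsilon^+)^{\lambda-1}\right|^2
	\le 2a_1^2|\nabla u_\varepsilon|^{2(p-1)}+2(\sqrt N+p-2)^2a_1^2\left(|\nabla u_\varepsilon|^{p-2}|D^2u_\varepsilon|\right)^2 .
\end{align*}
This yields the claim with $C_3=2a_1^2\max\{1,(\sqrt N+p-2)^2\}$, which depends only on $N$, $p$ and $a_1$.

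The main obstacle is the degenerate set $Z=\{\nabla u_\varepsilon=0\}$, where the expansion above is not legitimate (for $p>2$ the flux is only $C^{0,\alpha}$ a priori), and the right-hand side of the claim vanishes there. I handle $Z$ as follows. By Proposition~\ref{a.e.} the flux $V=a|\nabla u_\varepsilon|^{p-2}\nabla u_\varepsilon$ lies in $W^{1,1}_{\mathrm{loc}}$. Stampacchia's theorem for Sobolev functions then gives $DV=0$ a.e. on $\{V=0\}=Z$. Hence $\operatorname{div}V=0$ a.e. on $Z$, and by the a.e. equation the left-hand side $m_1\chi_\varepsilon(u_\varepsilon)-m_2(u_\varepsilon^+)^{\lambda-1}$ vanishes a.e. on $Z$. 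So the inequality holds trivially on $Z$.

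On the open set $\{\nabla u_\varepsilon\neq0\}$ the equation is uniformly elliptic locally, so standard regularity gives $u_\varepsilon\in W^{2,2}_{\mathrm{loc}}$ there and the pointwise computation above is valid a.e. I would make this step rigorous first, since it is the only non-algebraic part. If needed, I would instead justify it via the bound \eqref{5.16}, which shows $|\nabla u_\varepsilon|^{(p-2)/2}D^2u_\varepsilon\in L^2_{\mathrm{loc}}$.
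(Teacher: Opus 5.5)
Your proof is correct and follows the paper's argument. Like the paper, you expand $\mathrm{div}(a|\nabla u_\varepsilon|^{p-2}\nabla u_\varepsilon)$ pointwise using the a.e.\ equation from Proposition~\ref{a.e.}, bound the terms by $a_1|\nabla u_\varepsilon|^{p-1}$ and multiples of $a_1|\nabla u_\varepsilon|^{p-2}|D^2u_\varepsilon|$, and square; your constant is only sharper, and your Stampacchia argument for the flux on $\{\nabla u_\varepsilon=0\}$ adds rigor that the paper's ``direct calculation'' skips.
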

	
	\begin{proof}By direct calculation, we obtain
		\begin{align}
			&\left| m_1 \chi_\varepsilon\left( u_\varepsilon \right) - m_2 \left( u_{\varepsilon}^{+} \right)^{\lambda-1} \right|^2
			\notag\\ =
			& \left( \mathrm{div}\left(  a(x)|\nabla u_\varepsilon|^{p-2} \nabla u_\varepsilon \right)  \right)^2
			\notag\\=
			&\left( \sum^N_{i=1}\left( \frac{\partial \left( a(x) |\nabla u_\varepsilon|^{p-2}\right) }{\partial x_i} u_{\varepsilon x_i} + a(x)  |\nabla u_\varepsilon|^{p-2} u_{\varepsilon x_i x_i}\right) \right) ^2
			\notag\\=
			&\left( \sum^N_{i=1}\left( a_{x_i}(x) |\nabla u_\varepsilon|^{p-2}u_{\varepsilon x_i} + (p-2)a(x)|\nabla u_\varepsilon|^{p-4}\left( \sum^N_{j=1}u_{\varepsilon x_j}u_{\varepsilon x_j x_i}\right)u_{\varepsilon x_i} + a(x)  |\nabla u_\varepsilon|^{p-2} u_{\varepsilon x_i x_i}\right)\right)^2
			\notag\\ \leq
			&\left( \sum^N_{i=1}\left( a_1 |\nabla u_\varepsilon|^{p-1} + (p-2)a_1|\nabla u_\varepsilon|^{p-2}|\nabla u_{\varepsilon x_i}| + a_1  |\nabla u_\varepsilon|^{p-2} |u_{\varepsilon x_i x_i}|\right)\right)^2
			\notag\\ \leq
			&\left( N a_1 |\nabla u_\varepsilon|^{p-1} + \sum^N_{i=1}\left((p-2)a_1|\nabla u_\varepsilon|^{p-2}|\nabla u_{\varepsilon x_i}| + a_1  |\nabla u_\varepsilon|^{p-2}| u_{\varepsilon x_i x_i}|\right)\right)^2
			\notag\\ \leq
			&\left( N a_1 |\nabla u_\varepsilon|^{p-1} + Na_1(p-1)|\nabla u_\varepsilon|^{p-2} |D^2 u_\varepsilon| \right)^2 \label{1111}
			\\ \leq
			& C_3\left(  |\nabla u_\varepsilon|^{2(p-1)}  + \left( |\nabla u_\varepsilon|^{p-2} |D^2 u_\varepsilon| \right)^2\right) . \notag
		\end{align}
		This completes the proof.
	\end{proof}

	\section{Hausdorff measure of the free boundary}\label{Sec.7}

	In this section,  we prove that   at least one solution $u$  to the obstacle problem \eqref{main} corresponds  the free boundary having a locally finite $(N-1)$-dimensional Hausdorff measure.
	More specifically, we consider  $u_{\varepsilon} \to u$ in $C^{1,\alpha}_{\mathrm{loc}}(\Omega)$ with $u_{\varepsilon}$ and $u$  being {the solutions of the penalty problem \eqref{main1} and the obstacle problem \eqref{main},} respectively.
	First, by virtue of Proposition~\ref{grow}, for $\sigma\in(0,1)$, we define
	\begin{align}
		O_{\sigma}:=\left\{|\nabla u|\le\sigma^{\frac{1}{p-1}}\right\}, \quad
		O_{\sigma i}:=\left\{|u_{x_i}|\le\sigma^{\frac{1}{p-1}}\right\}.\notag
	\end{align}

	The following proposition provides a crucial measure-theoretic estimate, which   serves as a foundation for investigating the Hausdorff measure of the free boundary.
	\begin{proposition}\label{ho2}
		Let $C_1$ and $r_2$ be the positive constants determined by Proposition~\ref{grow} and Proposition~\ref{nondegeneracy}, respectively. Let $B_{r_4}\subset\subset\Omega$ be a ball whose radius $r_4$ is sufficiently small and satisfies $r_4\leq r_2$.	Then   for every
		$x_{0}\in\Upsilon^+ \cap B_{r_4}$, any $\sigma\in(0,1)$, and any $r\in(0,r_4)$ with $B_{r}(x_{0})\subset B_{r_4}$, it holds that
		\begin{align}
			\int_0^1\mathcal{L}^N \left( O_{\sigma}\cap B_{rs}(x_0)\cap\{u>0\}\right)\mathrm{d}s\leq C\sigma r^{N-1},\notag
		\end{align}
		where $C$ is a positive constant depending only on  $N$, $p$,  $\lambda$, $m_1$, $m_2$, $a_0$,  $a_1$, $c_1$,   and $\Omega$.
	\end{proposition}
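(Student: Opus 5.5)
We follow the Lebesgue-measure-of-gradient-level-sets strategy of Karp--Kilpeläinen--Petrosyan--Shahgholian and its adaptations in \cite{Zheng:2013,Wang:2025}. The idea is to work first with the penalized solutions $u_\varepsilon$, which by Proposition~\ref{a.e.} satisfy \eqref{main1} almost everywhere, and to pass to the limit $\varepsilon\to0$ at the end using $u_\varepsilon\to u$ in $C^{1,\beta}$ near $B_{r_4}$. Fix $x_0\in\Upsilon^+\cap B_{r_4}$ and $r$ with $B_r(x_0)\subset B_{r_4}$. By Proposition~\ref{grow}, $0\le u\le C_1 r^{p/(p-1)}$ on $B_r(x_0)$. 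Since $r\le r_2$, the smallness condition defining $r_2$ in Proposition~\ref{nondegeneracy} gives $m_2u^{\lambda-1}\le m_1/2$ there. By uniform convergence the same bound holds for $u_\varepsilon$ up to an error $o(1)$. Hence on $\{u_\varepsilon>\varepsilon\}\cap B_r(x_0)$ the right-hand side $f_\varepsilon:=m_1\chi_\varepsilon(u_\varepsilon)-m_2(u_\varepsilon^+)^{\lambda-1}$ satisfies $f_\varepsilon\ge m_1/4$.

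\textbf{Step 1: a weighted divergence identity.} For each $i$, introduce the truncation $\psi_\sigma(t):=\max\{-\sigma^{1/(p-1)},\min\{t,\sigma^{1/(p-1)}\}\}$ and consider the vector field $\psi_\sigma(u_{\varepsilon x_i})\,a|\nabla u_\varepsilon|^{p-2}\nabla u_\varepsilon$, or more simply integrate $f_\varepsilon\,\psi_\sigma'(u_{\varepsilon x_i})$-type quantities. The cleanest version tests the equation over $B_{\rho}(x_0)\cap\{u_\varepsilon>\varepsilon\}$ ($\rho=rs$) against $\psi_\sigma(u_{\varepsilon x_i})$ differentiated appropriately. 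Concretely:
\begin{align*}
\int_{B_\rho}\operatorname{div}\bigl(a|\nabla u_\varepsilon|^{p-2}\nabla u_\varepsilon\bigr)\,\mathbf 1_{O^\varepsilon_{\sigma i}}\,\mathrm dx
\end{align*}
is controlled through the coarea structure. I would instead follow the standard route: integrate the identity $\partial_i\bigl(\psi_\sigma(u_{\varepsilon x_i})\bigr)=\psi_\sigma'(u_{\varepsilon x_i})u_{\varepsilon x_ix_i}$ over $B_\rho(x_0)$. The divergence theorem bounds this by $2\sigma^{1/(p-1)}\mathcal H^{N-1}(\partial B_\rho)\le C\sigma^{1/(p-1)}\rho^{N-1}$. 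Integrating in $s\in(0,1)$ converts the boundary term into a volume term, which is the source of the $\int_0^1\cdots\mathrm ds$ in the statement.

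\textbf{Step 2: lower bound on $O_{\sigma i}$.} On $\{|u_{\varepsilon x_i}|<\sigma^{1/(p-1)}\}\cap\{u_\varepsilon>\varepsilon\}$, Proposition~\ref{ho1} together with $f_\varepsilon\ge m_1/4$ and $|\nabla u_\varepsilon|\le c_1$ yields
\begin{align*}
|\nabla u_\varepsilon|^{p-2}|D^2u_\varepsilon|\ge c>0
\end{align*}
once $\sigma$ and $r_4$ are small. The remaining task is to convert this into a lower bound for $\sum_i|u_{\varepsilon x_i x_i}|$ weighted by $|\nabla u_\varepsilon|^{p-2}\le\sigma^{(p-2)/(p-1)}$ on $O_\sigma$. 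This is where the factor $\sigma$ arises: $\sigma^{1/(p-1)}\cdot\sigma^{(p-2)/(p-1)}=\sigma$. To achieve it, multiply the integrand in Step 1 by $a|\nabla u_\varepsilon|^{p-2}$. More precisely, use the divergence form of the equation and the field $\Psi_\sigma(\nabla u_\varepsilon):=\bigl(\psi_\sigma(u_{\varepsilon x_1}),\dots\bigr)$ weighted appropriately, so that the integrand on $O_\sigma$ becomes essentially $f_\varepsilon\ge m_1/4$. The integrand outside $O_\sigma$ is handled by a sign or cutoff argument, using $f_\varepsilon\ge0$ on $\{u_\varepsilon>0\}$.

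\textbf{Step 3: limit.} From Steps 1 and 2 we obtain
\begin{align*}
\int_0^1\mathcal L^N\bigl(O^\varepsilon_\sigma\cap B_{rs}\cap\{u_\varepsilon>\varepsilon\}\bigr)\,\mathrm ds\le C\sigma r^{N-1}.
\end{align*}
Letting $\varepsilon\to0$, the $C^1$ convergence and the lower semicontinuity of measure on open sets give the claim. To handle the closed set $O_\sigma$, first use $\{|\nabla u|<\sigma^{1/(p-1)}\}$, then replace $\sigma$ by $2\sigma$.

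The main obstacle is Step 2. For $p>2$ the degenerate weight $|\nabla u_\varepsilon|^{p-2}$ must be arranged to produce exactly $\sigma$ rather than $\sigma^{1/(p-1)}$. In addition, the Hessian lower bound from Proposition~\ref{ho1} must be turned into control of a divergence whose boundary flux is $O(\sigma\rho^{N-1})$. I would try first the vector field $a|\nabla u_\varepsilon|^{p-2}\nabla u_\varepsilon$ truncated at level $\sigma$ in norm, i.e.\ $\min\{1,\sigma/|a|\nabla u_\varepsilon|^{p-2}\nabla u_\varepsilon|\}\cdot a|\nabla u_\varepsilon|^{p-2}\nabla u_\varepsilon$. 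Its flux through $\partial B_\rho$ is at most $C\sigma\rho^{N-1}$, and its divergence equals $f_\varepsilon\ge m_1/4$ on the set where the truncation is inactive, which contains a set comparable to $O_\sigma\cap\{u>0\}$.
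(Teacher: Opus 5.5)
\textbf{There is a genuine gap in the key step.} Your Steps 2--3 agree with the paper: the pointwise lower bound from Proposition~\ref{ho1} on the small-gradient set near the free boundary, and the limit $\varepsilon\to0$. What is missing is the integral estimate that produces the factor $\sigma r^{N-1}$. The first-order divergence identities you propose cannot produce it.

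In Step 1, $\int_{B_\rho}\psi_\sigma'(u_{\varepsilon x_i})u_{\varepsilon x_ix_i}\,\mathrm{d}x$ (and its sum over $i$) has no sign. Off the set where all gradient components are small, only a partial sum of the Hessian diagonal survives.

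The truncated flux in your last paragraph has the same defect. Put $A_\varepsilon:=a|\nabla u_\varepsilon|^{p-2}\nabla u_\varepsilon$ and $V:=\min\{1,\sigma/|A_\varepsilon|\}A_\varepsilon$. On $\{|A_\varepsilon|>\sigma\}$ you have $V=\sigma\nabla u_\varepsilon/|\nabla u_\varepsilon|$, so $\operatorname{div}V=\sigma\operatorname{div}(\nabla u_\varepsilon/|\nabla u_\varepsilon|)$, which is $\sigma$ times the mean curvature of the level sets of $u_\varepsilon$.
\begin{itemize}
\item There the divergence is not $f_\varepsilon$, and $f_\varepsilon\ge0$ gives it no sign.
\item The only pointwise bound available uses $\sigma<|A_\varepsilon|\le a_1|\nabla u_\varepsilon|^{p-1}$ and gives $|\operatorname{div}V|\le C|\nabla u_\varepsilon|^{p-2}|D^2u_\varepsilon|$. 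Its integral over $B_\rho$ carries no factor $\sigma$.
\item You could apply the divergence theorem on $\{|A_\varepsilon|>\tau\}$ and average over $\tau\in(\sigma,2\sigma)$. Then you need $\int_{B_\rho\cap\{\sigma<|A_\varepsilon|<2\sigma\}}|\nabla|A_\varepsilon||\,\mathrm{d}x\le C\sigma\rho^{N-1}$. That is a weighted-Hessian estimate on the small-gradient set of exactly the type you are trying to prove.
\end{itemize}
So ``handled by a sign or cutoff argument'' hides the whole difficulty.

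\textbf{The missing idea is to go one derivative higher, where a coercive, sign-definite quantity is available.} The paper differentiates \eqref{main1} in $x_i$, which is legitimate by Proposition~\ref{a.e.}. It tests with $H(u_{\varepsilon x_i})$, where $H$ is odd and nondecreasing, $|H|\le2^{p-1}\sigma$, $H'=2^{p-2}\sigma^{(p-2)/(p-1)}$ on $\{|\eta|\le2\sigma^{1/(p-1)}\}$, and $H'=0$ elsewhere. The terms then behave as follows.
\begin{itemize}
\item The principal term is bounded below by $a_0\int|\nabla u_\varepsilon|^{p-2}|\nabla u_{\varepsilon x_i}|^2H'(u_{\varepsilon x_i})\,\mathrm{d}x\ge0$. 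It vanishes off the small-gradient set, so nothing outside $O_\sigma$ has to be controlled.
\item The penalty term $\int m_1\chi_\varepsilon'(u_\varepsilon)u_{\varepsilon x_i}H(u_{\varepsilon x_i})\,\mathrm{d}x$ is nonnegative.
\item The $u_\varepsilon^{\lambda-2}$ term and the $\nabla a$ terms are $O(\sigma)$, because $|H|\lesssim\sigma$.
\item The flux through $\partial B_{rs}$ is at most $C\sigma\int_{\partial B_{rs}}(|\nabla u_\varepsilon|^{p-1}+|\nabla u_\varepsilon|^{p-2}|D^2u_\varepsilon|)\,\mathrm{d}S$. This is why the $s$-average is needed.
\end{itemize}
This choice of $H$ also resolves the $\sigma$ versus $\sigma^{1/(p-1)}$ mismatch you flagged. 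On $O_\varepsilon$ one has $H'\ge|\nabla u_\varepsilon|^{p-2}$. So after summing over $i$, the principal term dominates $\int_{B_{rs}\cap O_\sigma}(|\nabla u_\varepsilon|^{p-2}|D^2u_\varepsilon|)^2\,\mathrm{d}x$, which is exactly the quantity in Proposition~\ref{ho1}. Meanwhile $|\nabla u_\varepsilon|^{2(p-1)}\le2^{p-1}\sigma|\nabla u_\varepsilon|^{p-1}$ there.

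\textbf{A caution if you carry this out.} Since $\int_0^1\int_{\partial B_{rs}}g\,\mathrm{d}S\,\mathrm{d}s=r^{-1}\int_{B_r}g\,\mathrm{d}x$, ending with $C\sigma r^{N-1}$ requires the scale-invariant bound $\int_{B_r(x_0)}|\nabla u_\varepsilon|^{p-2}|D^2u_\varepsilon|\,\mathrm{d}x\le Cr^N$. This follows from the Caccioppoli estimate behind \eqref{5.16}, combined with the gradient growth $|\nabla u|\le C_1|x-x_0|^{1/(p-1)}$ of Proposition~\ref{grow}. Using only $|\nabla u_\varepsilon|\le c_1$ gives $Cr^{N-1}$ for that integral, and hence only $C\sigma r^{N-2}$ at the end.
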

	
	\begin{proof}Define
		\begin{align}
			O_{\varepsilon}:=\left\{ |\nabla u_\varepsilon| \leq 2\sigma^{\frac{1}{p-1}}\right\},
			\quad
			O_{\varepsilon i}:=\left\{ |u_{\varepsilon x_i}| \leq 2\sigma^{\frac{1}{p-1}}\right\}.\notag
		\end{align}
		
		We claim that $(O_\sigma\cap B_{r_4})\subset(O_\varepsilon \cap B_{r_4})$.
		Indeed, there exists $\varepsilon_1>0$ such that for every $\varepsilon\in(0,\varepsilon_1)$,
		\begin{align}
			\left\| |\nabla u_\varepsilon| - |\nabla u|\right\| _{L^{\infty}(\overline{B}_{r_4})}\leq \sigma^{\frac{1}{p-1}}.\notag
		\end{align}
		Then for any $x\in O_{\sigma}\cap B_{r_4}$, we have
		\begin{align}
			\left\| \nabla u_\varepsilon\right\| _{L^{\infty}(\overline{B}_{r_4})}\leq 	\left\| |\nabla u_\varepsilon| - |\nabla u|\right\| _{L^{\infty}(\overline{B}_{r_4})}\ + 	\left\| \nabla u \right\| _{L^{\infty}(\overline{B}_{r_4})}\leq 2\sigma^{\frac{1}{p-1}},\notag
		\end{align}
		which implies that $x \in O_\varepsilon \cap B_{r_4}$, or equivalently, $(O_\sigma \cap B_{r_4}) \subset (O_\varepsilon \cap B_{r_4})$.
		
		Let $H$ be defined by
		\begin{align}
			H(\eta) :=
			\begin{cases}
				2^{p-1}\sigma, & \eta > 2\sigma^{\frac{1}{p-1}}, \\
				2^{p-2}\sigma^{\frac{p-2}{p-1}}\eta, & |\eta|\leq 2\sigma^{\frac{1}{p-1}}, \\
				-2^{p-1}\sigma, & \eta < -2\sigma^{\frac{1}{p-1}}.
			\end{cases}\notag
		\end{align}
		It is clear that
		\begin{align}\label{7.1}
			|H(\eta)|\leq2^{p-1}\sigma.
		\end{align}

		Differentiating  \eqref{main1} with respect to $x_i$, we obtain
		\begin{align}\label{7.2}
			- \mathrm{div}\left(a_{x_i}(x) |\nabla u_\varepsilon|^{p-2} \nabla u_\varepsilon +(p-1) a(x)  |\nabla u_\varepsilon|^{p-2} \nabla u_{\varepsilon x_i}\right) +m_1 \chi_\varepsilon' \left( u_\varepsilon \right)u_{\varepsilon x_i} - m_2 (\lambda-1) \left(  u_{\varepsilon}^{+} \right) ^{\lambda-2}u_{\varepsilon x_i}=0.
		\end{align}
		Multiplying \eqref{7.2} by $H(u_{\varepsilon x_i})$ and integrating over $B_{rs}(x)$, we obtain
		\begin{align}
			&\int_{B_{rs}(x_0)}\left(a_{x_i}(x) |\nabla u_\varepsilon|^{p-2} \nabla u_\varepsilon +(p-1) a(x)  |\nabla u_\varepsilon|^{p-2} \nabla u_{\varepsilon x_i}\right)\nabla H(u_{\varepsilon x_i}) \mathrm{d}x
			\notag\\
			&+ \int_{B_{rs}(x_0)} m_1 \chi_\varepsilon' \left( u_\varepsilon \right)u_{\varepsilon x_i}H(u_{\varepsilon x_i})\mathrm{d}x - \int_{B_{rs}(x_0)}m_2 (\lambda-1) \left(  u_{\varepsilon}^{+} \right) ^{\lambda-2}u_{\varepsilon x_i}H(u_{\varepsilon x_i})\mathrm{d}x
			\notag\\=
			&\int_{\partial B_{ rs}(x_0)}\left(a_{x_i}(x) |\nabla u_\varepsilon|^{p-2} \nabla u_\varepsilon +(p-1) a(x)  |\nabla u_\varepsilon|^{p-2} \nabla u_{\varepsilon x_i}\right) H(u_{\varepsilon x_i})  \nu    \mathrm{d}S ,\notag
		\end{align}
		where $\nu$ is the unit outward normal vector.
		
		Let
		\begin{align}
			&I_1:=\int_{B_{rs}(x_0)}\left(a_{x_i}(x) |\nabla u_\varepsilon|^{p-2} \nabla u_\varepsilon +(p-1) a(x)  |\nabla u_\varepsilon|^{p-2} \nabla u_{\varepsilon x_i}\right)\nabla H(u_{\varepsilon x_i}) \mathrm{d}x,\notag\\
			&I_2:=\int_{B_{rs}(x_0)} m_1 \chi_\varepsilon' \left( u_\varepsilon \right)u_{\varepsilon x_i}H(u_{\varepsilon x_i})\mathrm{d}x,\notag\\
			&I_3:=\int_{B_{rs}(x_0)}m_2 (\lambda-1) \left(  u_{\varepsilon}^{+} \right) ^{\lambda-2}u_{\varepsilon x_i}H(u_{\varepsilon x_i})\mathrm{d}x,\notag\\
			&I_4:=\int_{\partial B_{ rs}(x_0)}\left(a_{x_i}(x) |\nabla u_\varepsilon|^{p-2} \nabla u_\varepsilon +(p-1) a(x)  |\nabla u_\varepsilon|^{p-2} \nabla u_{\varepsilon x_i}\right) H(u_{\varepsilon x_i})  \nu
			\mathrm{d}S .\notag
		\end{align}
		It follows that
		\begin{align}\label{7.3}
			I_1 + I_2 - I_3 = I_4.
		\end{align}
		
		{We now estimate $I_1$, $I_2$, $I_3$, and $I_4$, respectively.}	 	From the definition of $H(\eta)$, it is clear that $H(\eta)\eta \geq 0$. Consequently, for $I_2$ we obtain
		\begin{align}\label{7.4}
			I_2=\int_{B_{rs}(x_0)} m_1 \chi_\varepsilon' \left( u_\varepsilon \right)u_{\varepsilon x_i}H(u_{\varepsilon x_i})\mathrm{d}x \geq 0.
		\end{align}
		
		Combining \eqref{c1} and \eqref{7.1}, we find that $I_3$ satisfies
		\begin{align}
			I_3
			=
			&\int_{B_{rs}(x_0)}m_2 (\lambda-1) \left(  u_{\varepsilon}^{+} \right) ^{\lambda-2}u_{\varepsilon x_i}H(u_{\varepsilon x_i})\mathrm{d}x
			\notag\\ \leq
			& \int_{B_{rs}(x_0)}m_2 (\lambda-1) \left|  u_{\varepsilon}^{+} \right| ^{\lambda-2} |u_{\varepsilon x_i}||H(u_{\varepsilon x_i})|\mathrm{d}x
			\notag\\ \leq
			& m_2 (\lambda-1)c_{1} ^{\lambda-1}\int_{B_{rs}(x_0)} 2^{p-1}\sigma\mathrm{d}x
			\notag\\ \leq
			&C\sigma r^N s^N,\label{7.5}
		\end{align}
		where $C$ is a positive  constant  depending only on  $p$, $\lambda$, $m_2$,   and $c_1$.
		
		By H\"older's inequality and \eqref{5.17}, we find that $I_4$ satisfies
		\begin{align}
			\int_{0}^{1}I_4\mathrm{d}s
			=
			&\int_{0}^{1}\int_{\partial B_{ rs}(x_0)}\left(a_{x_i}(x) |\nabla u_\varepsilon|^{p-2} \nabla u_\varepsilon +(p-1) a(x)  |\nabla u_\varepsilon|^{p-2} \nabla u_{\varepsilon x_i}\right) H(u_{\varepsilon x_i})  \nu  \mathrm{d}S \mathrm{d}s
			\notag\\\leq
			& \int_{B_r(x_0)}a_1 |\nabla u_\varepsilon|^{p-1} |H(u_{\varepsilon x_i})| \mathrm{d}x + \int_{B_r(x_0)}(p-1) a_1  |\nabla u_\varepsilon|^{p-2} |D^2 u| |H(u_{\varepsilon x_i})| \mathrm{d}x
			\notag\\ \leq
			& \int_{B_r(x_0)}a_1 |\nabla u_\varepsilon|^{p-1}  2^{p-1}\sigma \mathrm{d}x + \int_{B_r(x_0)}(p-1) a_1   2^{p-1}\sigma |\nabla u_\varepsilon|^{p-2} |D^2 u|  \mathrm{d}x
			\notag\\\leq
			& C\sigma r^N + C\sigma\left(\int_{B_r(x_0)} |\nabla u_\varepsilon|^{p-2}  \mathrm{d}x  \right) ^\frac{1}{2} \left(\int_{B_r(x_0)} |\nabla u_\varepsilon|^{p-2} |D^2 u|^2 \mathrm{d}x  \right) ^\frac{1}{2}
			\notag\\ \leq
			& C\sigma r^N +  C\sigma r^{N-1}
			\notag\\ \leq
			& C\sigma r^{N-1},\label{7.6}
		\end{align}
		where $C$ is a positive  constant  depending only on $p$, $a_1$, $c_1$,    and $\Omega$.
		
		For the term $I_1$, we have
		\begin{align}
			I_1
			=
			&\int_{B_{rs}(x_0)}\left(a_{x_i}(x) |\nabla u_\varepsilon|^{p-2} \nabla u_\varepsilon +(p-1) a(x)  |\nabla u_\varepsilon|^{p-2} \nabla u_{\varepsilon x_i}\right)\nabla H(u_{\varepsilon x_i}) \mathrm{d}x
			\notag\\ =
			& \int_{B_{rs}(x_0)}a_{x_i}(x) |\nabla u_\varepsilon|^{p-2} \nabla u_\varepsilon \nabla H(u_{\varepsilon x_i}) \mathrm{d}x + \int_{B_{rs}(x_0)} (p-1) a(x)  |\nabla u_\varepsilon|^{p-2} \nabla u_{\varepsilon x_i}\nabla H(u_{\varepsilon x_i}) \mathrm{d}x. \notag
		\end{align}
		Let
		\begin{align}
			&I_{11}:=\int_{B_{rs}(x_0)}a_{x_i}(x) |\nabla u_\varepsilon|^{p-2} \nabla u_\varepsilon \nabla H(u_{\varepsilon x_i}) \mathrm{d}x,\notag\\
			&I_{12}:=\int_{B_{rs}(x_0)} (p-1) a(x)  |\nabla u_\varepsilon|^{p-2} \nabla u_{\varepsilon x_i}\nabla H(u_{\varepsilon x_i}) \mathrm{d}x. \notag
		\end{align}
		Thus,
		\begin{align}\label{7.8}
			I_1 = I_{11} + I_{12}.
		\end{align}
		By  \cite[P.711]{Evans:2022}, \eqref{1111}, and \eqref{7.6}, we find that $I_{11}$ satisfies
		\begin{align}
			\sum_{i=1}^{N} \int_0^1 I_{11}\mathrm{d}s
			=
			&\sum_{i=1}^{N} \int_0^1 \int_{B_{rs}(x_0)}a_{x_i}(x) |\nabla u_\varepsilon|^{p-2} \nabla u_\varepsilon \nabla H(u_{\varepsilon x_i}) \mathrm{d}x\mathrm{d}s
			\notag\\ \leq
			&- \sum_{i=1}^{N} \int_0^1 \int_{B_{rs}(x_0)}\mathrm{div} \left( a_{x_i}(x) |\nabla u_\varepsilon|^{p-2} \nabla u_\varepsilon \right)  H(u_{\varepsilon x_i}) \mathrm{d}x\mathrm{d}s
			\notag\\
			&+  \sum_{i=1}^{N} \int_0^1 \int_{\partial B_{ rs}(x_0)} a_{x_i}(x)  H(u_{\varepsilon x_i})|\nabla u_\varepsilon|^{p-2} \nabla u_\varepsilon  \nu  \mathrm{d}S\mathrm{d}s
			\notag\\ \leq
			& \sum_{i=1}^{N} \int_0^1 \int_{B_{rs}(x_0)}\left( N a_1 |\nabla u_\varepsilon|^{p-1} + Na_1(p-1)|\nabla u_\varepsilon|^{p-2} |D^2 u_\varepsilon| \right)  |H(u_{\varepsilon x_i})| \mathrm{d}x\mathrm{d}s
			\notag\\
			&+  \sum_{i=1}^{N}  \int_{ B_{ r}(x_0)} a_{x_i}(x)  H(u_{\varepsilon x_i})|\nabla u_\varepsilon|^{p-2} \nabla u_\varepsilon  \nu  \mathrm{d}x
			\notag\\ \leq
			& \sum_{i=1}^{N} \int_{B_{r}(x_0)}\left( N a_1 |\nabla u_\varepsilon|^{p-1} + Na_1(p-1)|\nabla u_\varepsilon|^{p-2} |D^2 u_\varepsilon| \right)  |H(u_{\varepsilon x_i})| \mathrm{d}x\mathrm{d}s
			\notag\\
			&+  \sum_{i=1}^{N}  \int_{ B_{ r}(x_0)} |a_{x_i}(x)|  |H(u_{\varepsilon x_i})||\nabla u_\varepsilon|^{p-1}     \mathrm{d}x
			\notag\\ \leq
			& C \sigma r^{N-1}  + \sum_{i=1}^{N}  \int_{ B_{ r}(x_0)}a_1  |\nabla u_\varepsilon|^{p-1} 2^{p-1}\sigma \mathrm{d}x
			\notag\\ \leq
			& C \sigma r^{N-1} , \label{7.9}
		\end{align}
		where $C$ is a positive  constant  depending only on $N$, $p$, $a_1$,  $c_1$,    and $\Omega$.
		
		Since $\nabla H(u_{\varepsilon x_i}) = 0$ in $O_{\varepsilon i}^c$ and $O_\sigma \subset O_\varepsilon \subset O_{\varepsilon i}$, the term $I_{12}$ satisfies
		\begin{align}
			\sum_{i=1}^{N} I_{12}
			=
			&\sum_{i=1}^{N} \int_{B_{rs}(x_0)} (p-1) a(x)  |\nabla u_\varepsilon|^{p-2} \nabla u_{\varepsilon x_i}\nabla H(u_{\varepsilon x_i}) \mathrm{d}x
			\notag\\ =
			&\sum_{i=1}^{N} \int_{B_{rs}(x_0)\cap O_{\varepsilon i} } (p-1) a(x)  |\nabla u_\varepsilon|^{p-2} \nabla u_{\varepsilon x_i}\nabla H(u_{\varepsilon x_i}) \mathrm{d}x
			\notag\\
			&+ \sum_{i=1}^{N} \int_{B_{rs}(x_0)\cap O_{\varepsilon i}^c } (p-1) a(x)  |\nabla u_\varepsilon|^{p-2} \nabla u_{\varepsilon x_i}\nabla H(u_{\varepsilon x_i}) \mathrm{d}x
			\notag\\=
			&\sum_{i=1}^{N} \int_{B_{rs}(x_0)\cap O_{\varepsilon i} } (p-1) a(x)  |\nabla u_\varepsilon|^{p-2} \nabla u_{\varepsilon x_i}\nabla H(u_{\varepsilon x_i}) \mathrm{d}x
			\notag\\ =
			&\sum_{i=1}^{N} \int_{B_{rs}(x_0)\cap O_{\varepsilon i} } (p-1) a(x)  |\nabla u_\varepsilon|^{p-2} |\nabla u_{\varepsilon x_i}|^2 2^{p-2}\sigma^{\frac{p-2}{p-1}} \mathrm{d}x
			\notag\\ \geq
			&\sum_{i=1}^{N} \int_{B_{rs}(x_0)\cap O_{\varepsilon } } (p-1) a(x)  |\nabla u_\varepsilon|^{p-2} |\nabla u_{\varepsilon x_i}|^2 2^{p-2}\sigma^{\frac{p-2}{p-1}} \mathrm{d}x
			\notag\\ =
			& \int_{B_{rs}(x_0)\cap O_{\varepsilon } } (p-1) a(x)  |\nabla u_\varepsilon|^{p-2} |D^2 u_{\varepsilon }|^2 2^{p-2}\sigma^{\frac{p-2}{p-1}} \mathrm{d}x
			\notag\\ \geq
			& \int_{B_{rs}(x_0)\cap O_{\varepsilon } } (p-1) a(x)  |\nabla u_\varepsilon|^{2(p-2)} |D^2 u_{\varepsilon }|^2 \mathrm{d}x
			\notag\\ \geq
			& C\int_{B_{rs}(x_0)\cap O_{\varepsilon } }   \left( |\nabla u_\varepsilon|^{(p-2)} |D^2 u_{\varepsilon }| \right) ^2\mathrm{d}x
			\notag\\ \geq
			& C\int_{B_{rs}(x_0)\cap O_{\sigma} }   \left( |\nabla u_\varepsilon|^{(p-2)} |D^2 u_{\varepsilon }| \right) ^2\mathrm{d}x ,\label{7.10}
		\end{align}
		{where $C$ is a positive  constant  depending only on  $p$ and $a_0$.}
		
		By \eqref{7.3}, \eqref{7.4}, and \eqref{7.8}, we obtain
		\begin{align}
			I_{12}=I_4 + I_3  -I_2 -I_{11}\leq I_4 + I_3 + |I_{11}|, \notag
		\end{align}
		which, along with \eqref{7.5}, \eqref{7.6}, and \eqref{7.9},  gives
		\begin{align}
			\sum_{i=1}^{N} \int_0^1 I_{12}\mathrm{d}s
			\leq
			&\sum_{i=1}^{N} \int_0^1  I_4 \mathrm{d}s + 	\sum_{i=1}^{N} \int_0^1  I_3 \mathrm{d}s + 	\sum_{i=1}^{N} \int_0^1  |I_{11}|\mathrm{d}s
			\notag\\ \leq
			& C\sigma r^{N-1} + \sum_{i=1}^{N} \int_0^1   C\sigma r^N s^N \mathrm{d}s
			\notag\\ \leq
			& C\sigma r^{N-1},\label{7.12}
		\end{align}
		where $C$ is a positive  constant  depending only on $N$, $p$, $\lambda$, $m_2$, $a_1$,  $c_1$,  and $\Omega$.
		
		Combining \eqref{7.10} and \eqref{7.12}, we obtain
		\begin{align}\label{7.13}
			\int_0^1 \int_{B_{rs}(x_0)\cap O_{\sigma} }   \left( |\nabla u_\varepsilon|^{(p-2)} |D^2 u_{\varepsilon }| \right) ^2\mathrm{d}x \mathrm{d}s \leq C\sigma r^{N-1},
		\end{align}
		where $C$ is a positive  constant  depending only on $N$, $p$, $\lambda$, $m_2$, $a_0$, $a_1$,   $c_1$,  and $\Omega$.
		
		Using Proposition~\ref{ho1}, \eqref{7.13}, the definition of $O_\varepsilon$, and the inclusion $O_\sigma \subset O_\varepsilon$, we deduce that
		\begin{align}
			&\frac{1}{C_3} \int_0^1 \int_{B_{rs}(x_0)\cap O_{\sigma} }  \left| m_1 \chi_\varepsilon \left( u_\varepsilon \right) - m_2 \left(  u_{\varepsilon}^{+} \right) ^{\lambda-1}\right| ^2 \mathrm{d}x \mathrm{d}s
			\notag\\ \leq
			&\int_0^1 \int_{B_{rs}(x_0)\cap O_{\sigma} } |\nabla u_\varepsilon|^{2(p-1)}\mathrm{d}x \mathrm{d}s
			+ \int_0^1 \int_{B_{rs}(x_0)\cap O_{\sigma} } \left(|\nabla u_\varepsilon|^{p-2} |D^2 u_\varepsilon|\right) ^2\mathrm{d}x \mathrm{d}s
			\notag\\\leq
			& C\sigma r^{N-1} + \int_0^1 \int_{B_{rs}(x_0)\cap O_{\varepsilon} } |\nabla u_\varepsilon|^{2(p-1)}\mathrm{d}x \mathrm{d}s
			\notag\\ \leq
			& C\sigma r^{N-1} + \int_0^1 \int_{B_{rs}(x_0)\cap O_{\varepsilon} } 2^{p-1}\sigma|\nabla u_\varepsilon|^{(p-1)}\mathrm{d}x \mathrm{d}s
			\notag\\ \leq
			& C\sigma r^{N-1},\notag
		\end{align}
		where $C$ is a positive  constant  depending only on $N$, $p$, $\lambda$, $m_2$, $a_0$, $a_1$,  $c_1$,  and $\Omega$.
		
		Letting $\varepsilon \to 0$, we obtain
		\begin{align}\label{7.14}
			\frac{1}{C_3} \int_0^1 \int_{B_{rs}(x_0)\cap O_{\sigma} }  \left| m_1 \chi_{\{u>0\}}  - m_2  u ^{\lambda-1}\chi_{\{u>0\}} \right| ^2 \mathrm{d}x \mathrm{d}s
			\leq  C\sigma r^{N-1},
		\end{align}
		where $C$ is a positive  constant  depending only on $N$, $p$, $\lambda$, $m_2$, $a_0$, $a_1$,   $c_1$,  and $\Omega$.

		Since $r < r_2$, by Proposition~\ref{grow}, we have
		\begin{align*}
			m_1 \chi_{\{u>0\}} - m_2 u^{\lambda-1} \chi_{\{u>0\}}
			&= m_1 - m_2 u^{\lambda-1} \\
			&\ge m_1 - m_2 C_1^{\lambda-1} r_{2}^{\frac{p(\lambda-1)}{p-1}} \\
			&\ge \frac{m_1}{2} \ \text{in } B_{r}(x_0) \cap \{u > 0\},
		\end{align*}
		which, along with \eqref{7.14}, implies that
		\begin{align}
			\int_0^1 \int_{B_{rs}(x_0)\cap O_{\sigma} \cap \{u>0\}}  \frac{m_1^2}{4C_3}\mathrm{d}x \mathrm{d}s \leq  C\sigma r^{N-1},\notag
		\end{align}
		or equivalently,
		\begin{align}
			\int_0^1\mathcal{L}^N \left( O_{\sigma}\cap B_{rs}(x_0)\cap\{u>0\}\right)\mathrm{d}s\leq C\sigma r^{N-1},\notag
		\end{align}
		where $C$ is a positive  constant  depending only on $N$, $p$, $\lambda$, $m_1$, $m_2$, $a_0$,   $a_1$,  $c_1$,  and $\Omega$.
	\end{proof}

	Now we recall the definition of Hausdorff measure of a set in $\mathbb{R}^N$.
	\begin{definition} [{\cite[p.60]{Evans:2018}}]\label{hadef}
		\begin{enumerate} \item [(i)]Let $ A \subset \mathbb{R}^N $, $ 0 \leq s < \infty $, $ 0 < \delta \leq \infty $. Define
			\begin{align}
				\mathcal{H}_\delta^t(A) := \inf \left\{ \sum_{j=1}^{\infty} \alpha(s) \left( \frac{\operatorname{diam} B_j}{2} \right)^s \ \bigg|\ A \subset \bigcup_{j=1}^{\infty} B_j,\ \operatorname{diam} B_j \leq \delta \right\},\notag
			\end{align}
			where $ \alpha(s) :=\frac{\pi^{\frac{s}{2}}}{\Gamma\left( \frac{s}{2} + 1 \right)}$ with the standard gamma function $  \Gamma (s) := \int_0^\infty e^{-x} x^{s-1} \mathrm{d}x $ for $   0<s <  \infty $.
			\item [(i)]For $ A $ and $ s $ as above, define
			\begin{align}
				\mathcal{H}^s(A) := \lim_{\delta \to 0} \mathcal{H}_\delta^s(A) = \sup_{\delta > 0} \mathcal{H}_\delta^s(A).\notag
			\end{align}
			We call that $ \mathcal{H}^s (A)$ is the $ s $-dimensional Hausdorff measure of $A$ in $ \mathbb{R}^N $.
		\end{enumerate}
	\end{definition}
	
	Based on the Proposition \ref{ho2}, we show that the free boundary $\Upsilon^+$ possesses locally finite $(N-1)$-dimensional Hausdorff measure.
	\begin{theorem} \label{ho}
		Let  $r_4$  be the same as in  Proposition~\ref{ho2}. Then  for every  $x_0 \in\Upsilon^+ \cap B_{r_4}$ {and $r \in \left( 0, \frac{r_4}{2}\right) $,} there holds
		\begin{align}
			\mathcal{H}^{N-1}\left( B_r(x_0) \cap \Upsilon^+\right)  \leq C r^{N-1},\notag
		\end{align}
		where $C$ is a positive constant depending only on  $N$, $p$, $\lambda$, $m_1$, $m_2$, $a_0$, $a_1$,   $c_1$,  and $\Omega$.
	\end{theorem}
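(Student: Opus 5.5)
We will follow the classical covering argument of Caffarelli type, as in \cite{Karp:2000,Lee:2003,Wang:2025}, combining the measure estimate of Proposition~\ref{ho2} with the growth and non-degeneracy bounds of Propositions~\ref{grow} and~\ref{nondegeneracy}.

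\textbf{Step 1: a lower bound near each free boundary point.} Fix a small $\rho\in(0,r)$ and $y\in B_r(x_0)\cap\Upsilon^+$. We claim that
\[
\mathcal{L}^N\bigl(B_\rho(y)\cap\{u>0\}\cap O_{\sigma}\bigr)\geq c\rho^N
\quad\text{with}\quad \sigma=\sigma(\rho):=M\rho .
\]
Here $c>0$ and $M$ depend only on the data.

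By Proposition~\ref{nondegeneracy}, there is $z\in\partial B_{\rho/2}(y)$ with $u(z)\geq C_2(\rho/2)^{p/(p-1)}$. By Proposition~\ref{grow} applied at a nearest free boundary point, the distance $d(z):=\mathrm{dist}(z,\Upsilon^+)$ satisfies $C_1d(z)^{p/(p-1)}\geq u(z)$. Hence $d(z)\geq \kappa\rho$ for some $\kappa\in(0,1/2)$ depending only on the data, so $B_{\kappa\rho}(z)\subset\{u>0\}\cap B_\rho(y)$.

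Also, every $x\in B_\rho(y)$ has a free boundary point (namely $y$) within distance $\rho$. Proposition~\ref{grow} then gives $|\nabla u(x)|\leq C_1(2\rho)^{1/(p-1)}$, that is, $x\in O_\sigma$ once $M:=2C_1^{p-1}$. Therefore the whole set $B_{\kappa\rho}(z)$ lies in $B_\rho(y)\cap\{u>0\}\cap O_\sigma$, which gives the claim with $c=\omega_N\kappa^N$.

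\textbf{Step 2: Besicovitch covering.} Cover $B_r(x_0)\cap\Upsilon^+$ by the balls $\{B_\rho(y)\}_{y}$. By Besicovitch's theorem, extract a subfamily $\{B_\rho(y_j)\}_{j=1}^{n_\rho}$ that still covers and has overlap bounded by a dimensional constant $\xi_N$. All these balls lie in $B_{2r}(x_0)\subset B_{r_4}$, because $r<r_4/2$.

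Summing Step 1 over $j$ gives
\[
n_\rho\, c\rho^N\leq \xi_N\,\mathcal{L}^N\bigl(B_{2r}(x_0)\cap\{u>0\}\cap O_{M\rho}\bigr).
\]

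\textbf{Step 3: bring in Proposition~\ref{ho2} and conclude.} Proposition~\ref{ho2} only controls the $s$-average $\int_0^1\mathcal{L}^N(O_\sigma\cap B_{Rs}(x_0)\cap\{u>0\})\,\mathrm{d}s$. Since the integrand is monotone in $s$, restricting to $s\in[1/2,1]$ gives
\[
\mathcal{L}^N\bigl(O_\sigma\cap B_{R/2}(x_0)\cap\{u>0\}\bigr)\leq 2C\sigma R^{N-1}.
\]
Choose $R=4r$. This needs $B_{4r}(x_0)\subset B_{r_4}$, which can be arranged by shrinking $r_4$ by a fixed factor, or by applying Proposition~\ref{ho2} with $x_0$ replaced by suitable free boundary points and a finite number of balls. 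We also need $\sigma=M\rho<1$, which holds for $\rho$ small. We then obtain
\[
n_\rho\,\rho^N\leq C\rho\, r^{N-1},
\qquad\text{hence}\qquad n_\rho\,\rho^{N-1}\leq Cr^{N-1}.
\]
By Definition~\ref{hadef}, $\mathcal{H}^{N-1}_{2\rho}(B_r(x_0)\cap\Upsilon^+)\leq \alpha(N-1)\,n_\rho\,\rho^{N-1}\leq Cr^{N-1}$. Letting $\rho\to0$ yields the theorem.

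\textbf{Main obstacle.} The delicate point is Step 1. We must ensure that the whole chosen ball lies in $O_\sigma$ with $\sigma$ proportional to $\rho$ itself, and not to a power such as $\rho^{p/(p-1)}$. Otherwise the factor $\sigma$ in Proposition~\ref{ho2} would not cancel exactly one power of $\rho$. The gradient growth exponent $1/(p-1)$ in Proposition~\ref{grow} matches the definition $O_\sigma=\{|\nabla u|\le\sigma^{1/(p-1)}\}$ precisely, which is why the argument closes. A secondary technical point is keeping all balls inside $B_{r_4}$ with $x_0$ a free boundary point, as Proposition~\ref{ho2} requires; the restriction $r<r_4/2$ is what provides this.
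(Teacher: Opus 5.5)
Your proposal is correct and follows essentially the same route as the paper. The $s$-averaged bound of Proposition~\ref{ho2} is turned into a single-radius bound by monotonicity, and a ball of radius comparable to $\rho$ is placed inside $B_\rho(y)\cap O_\sigma\cap\{u>0\}$; you derive this directly from Propositions~\ref{grow} and~\ref{nondegeneracy}, where the paper invokes porosity (Theorem~\ref{duo}), which rests on the same two facts. A Besicovitch covering count then finishes the argument.

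Your bookkeeping is more careful than the paper's. You tie the level to the radius via $\sigma=M\rho$, whereas the paper uses $\sigma$ as both the level and the ball radius, which implicitly needs $C_1\le 1$. You also enlarge to $B_{2r}(x_0)$ with $R=4r$, whereas the paper sums over balls $B_\sigma(x^i)$ as if they lay inside $B_r(x_0)$.
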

	
	\begin{proof}
		We first claim
		\begin{align}\label{8.1}
			\mathcal{L}^N \left(  O_{\sigma}\cap B_{rs}(x_0)\cap\{u>0\}\right)  \leq C\sigma r^{N-1}, { \forall r < \frac{r_4}{2},}
		\end{align}
		where $C$ is a positive  constant  depending only on $N$, $p$, $\lambda$, $m_1$, $m_2$, $a_0$,   $a_1$,  $c_1$,  and $\Omega$. Indeed, if \eqref{8.1} fails, then there exists a ball $B_r(x_0)$ centered on the free boundary such that for every $k \in \mathbb{R}$, it holds that
		\begin{align}
			\mathcal{L}^N\left( O_\sigma \cap B_r(x_0) \cap \{ u > 0 \}\right)  \geq k \sigma r^{N-1}.\notag
		\end{align}
		However, by Proposition~\ref{ho2}, we obtain
		\begin{align}
			C \sigma r^{N-1}
			\geq
			& \int_0^1 \mathcal{L}^N\left( O_\sigma \cap B_{2rs}(x_0) \cap \{ u > 0 \}\right)  \mathrm{d}s
			\notag\\ =
			& \int_0^{\frac{1}{2}} \mathcal{L}^N\left( O_\sigma \cap B_{2rs}(x_0) \cap \{ u > 0 \}\right)  \mathrm{d}s
			+ \int_{\frac{1}{2}}^1 \mathcal{L}^N\left( O_\sigma \cap B_{2rs}(x_0) \cap \{ u > 0 \}\right)  \mathrm{d}s
			\notag\\\geq
			& \frac{1}{2} 	\mathcal{L}^N\left( O_\sigma \cap B_r(x_0) \cap \{ u > 0 \}\right)
			\notag\\ \geq
			& \frac{1}{2} k \sigma r^{N-1}.\notag
		\end{align}
		Letting $k \to \infty$, we get a contradiction. Therefore, \eqref{8.1} holds.
		
		Now, according to Theorem~\ref{duo}, for $x_0 \in B_{1-\sigma} \cap \Upsilon^+$, there exist $y_0 \in \{ u > 0 \}$ and $c(N, p) > 0$ such that
		\begin{align}
			B_{c\sigma}(y_0) \subset \left( B_\sigma(x_0) \cap O_\sigma \cap \{ u > 0 \}\right) .\notag
		\end{align}
		By virtue of the Besicovitch covering theorem, let $\{ B_\sigma(x^i) \}_{i \in I}$ be finite coverings of $B_r(x_0) \cap \Upsilon^+$ with $x^i \in \Upsilon^+$ and at most $n(N)$ overlapping at each point. Then we obtain
		\begin{align}
			\sum_{i \in I} C(N) (c\sigma)^N
			\leq \sum_{i \in I} \mathcal{L}^N\left( O_\sigma \cap B_\sigma(x^i) \cap \{ u > 0 \}\right)
			\leq n \, \mathcal{L}^N\left( O_\sigma \cap B_r(x_0) \cap \{ u > 0 \}\right)
			\leq C n \sigma r^{N-1},\notag
		\end{align}
		where $C$ is a positive  constant  depending only on $N$, $p$, $\lambda$, $m_1$, $m_2$, $a_0$,   $a_1$,  $c_1$,  and $\Omega$.
		Therefore, it follows that
		\begin{align}
			\mathcal{H}^{N-1}\left( B_r(x_0) \cap \Upsilon^+\right)
			\leq \liminf_{\sigma \to 0} C(N) \sigma^{N-1}
			\leq C r^{N-1}.\notag
		\end{align}
			\end{proof}

\end{document}